
\documentclass[11pt,reqno]{amsart}
\setlength{\voffset}{-.25in}
\usepackage{amssymb,latexsym}
\usepackage{graphicx}
\usepackage{url}		

\textwidth=6.175in
\textheight=9.0in
\headheight=13pt
\calclayout

\usepackage[utf8]{inputenc}
\usepackage[T1]{fontenc}
\usepackage{amsfonts}
\usepackage[greek,english]{babel}
\usepackage{lmodern}

\usepackage{enumerate}
\usepackage{enumitem}

\usepackage[normalem]{ulem} 

\makeatletter
\newcommand{\biggg}{\bBigg@{2.75}}
\newcommand{\Biggg}{\bBigg@{3}}
\newcommand{\bigggg}{\bBigg@{3.25}}
\newcommand{\Bigggg}{\bBigg@{3.5}}
\newcommand{\biggggg}{\bBigg@{3.75}}
\newcommand{\Biggggg}{\bBigg@{4}}
\makeatother

\newtheoremstyle{mystyle}
  {}
  {}
  {\itshape}
  {}
  {\bfseries}
  {.}
  { }
  {\thmname{#1}\thmnumber{ #2}\thmnote{ (#3)}} 

\theoremstyle{mystyle}

\newtheorem{thm}{Theorem}[section]
\newtheorem{theorem}[thm]{Theorem}
\newtheorem{lemma}[thm]{Lemma}
\newtheorem{corollary}[thm]{Corollary}

\newtheorem{entry}[thm]{Entry}

\makeatletter
\newcommand{\leqnomode}{\tagsleft@true\let\veqno\@@leqno}
\newcommand{\reqnomode}{\tagsleft@false\let\veqno\@@eqno}
\makeatother

\allowdisplaybreaks
\begin{document}
\setcounter{page}{1}

\title[Ninth degree analogue]{Ninth degree analogue of \linebreak Ramanujan's septic theta function identity}
\author{Sun Kim and \"{O}rs Reb\'{a}k}
\address{Department of Mathematics, and Institute of Pure and Applied Mathematics, Jeonbuk National University, 567 Baekje-daero, Jeonju-si, Jeonbuk-do, 54896, Republic of Korea}
\email{sunkim@jbnu.ac.kr}
\address{Department of Mathematics and Statistics, University of Tromsø -- The Arctic University of Norway, 9037 Tromsø, Norway}
\email{ors.rebak@uit.no}

\dedicatory{Dedicated to George E. Andrews and Bruce C. Berndt for their 85th birthdays}

\begin{abstract}
On page 206 in his lost notebook, Ramanujan recorded a seventh degree identity for his theta function $\varphi(q)$. We give an analogous ninth degree identity. We also provide an application of an entry from his second notebook on a cubic equation and an interpretation with theta functions for some of his trigonometric identities. Lastly, we calculate five examples for $\varphi(e^{-\pi\sqrt{n}})$.
\end{abstract}

\keywords{theta functions, ninth degree identities, nonic identities, Ramanujan's lost notebook}
\subjclass[2010]{33C05, 05A30, 11F32, 11R29}
\date{\today}

\maketitle

\section{Introduction and the septic identity}

Ramanujan's general theta function $f(a,b)$ is defined by \cite[p.~197]{RamanujanEarlierII}, \cite[p.~34]{BerndtIII}
\begin{equation}\label{def:generaltheta}
f(a,b) := \sum_{n = -\infty}^\infty a^{n(n+1)/2}b^{n(n-1)/2}, \qquad |ab| < 1.
\end{equation}
The Jacobi triple product identity \cite[pp.~176--183]{Jacobi}, \cite[p.~197]{RamanujanEarlierII}, \cite[p.~35]{BerndtIII} states that
\begin{equation}\label{Jacobi-triple-product}
f(a,b) = (-a; ab)_\infty (-b; ab)_\infty (ab; ab)_\infty, \qquad  |ab| < 1,
\end{equation}
where
\begin{align*}
(a; q)_\infty := \prod_{k = 0}^\infty (1 - aq^k), \qquad |q| < 1.
\end{align*}
In Ramanujan's notation, the theta function $\varphi(q)$ is defined by \cite[p.~197]{RamanujanEarlierII}, \cite[p.~36]{BerndtIII}
\begin{equation}\label{def:phi}
\varphi(q) := f(q,q) = \sum_{n = -\infty}^\infty q^{n^2} = (-q; q^2)_\infty^2 (q^2; q^2)_\infty, \qquad |q| < 1,
\end{equation}
where the series and product representations follow from \eqref{def:generaltheta} and \eqref{Jacobi-triple-product}, respectively. After Ramanujan, we set \cite[p.~197]{RamanujanEarlierII}, \cite[p.~37]{BerndtIII}
\begin{equation}\label{def:chi}
\chi(q) := (-q; q^2)_\infty, \qquad |q| < 1.
\end{equation}
If $q = \exp(-\pi\sqrt{n})$, for a positive rational number $n$, the class invariant $G_n$ is defined by \cite{RamanujanModularPi}, \cite[pp.~23--39]{RamanujanCollected}, \cite[p.~183]{BerndtV}
\begin{equation}\label{def:G}
G_n := 2^{-1/4}q^{-1/24}\chi(q).
\end{equation}

Using the theory of modular equations, one can find values of $\varphi(q)$ of the form $\varphi(e^{-\pi \sqrt{n}})$, for a positive rational number $n$. For an overview of the theory of modular equations, see \cite[pp.~213--214]{BerndtIII}, \cite{BerndtChan}, \cite[p.~185]{BerndtV}, and the values obtained by Ramanujan are proved in \cite{BerndtChan}, \cite[pp.~323--351]{BerndtV}. We also recommend the survey \cite{BerndtRebak} in the subject. For calculating special values of $\varphi(q)$, there are other particular methods, as we shall see in Entry~\ref{entry:Ramanujan206}.

On page 206 in his lost notebook \cite{RamanujanLost}, Ramanujan gives the following septic identities.

\begin{entry}\label{entry:Ramanujan206}
\leqnomode
Let
\begin{equation}\label{7:1+u+v+w}
\frac{\varphi(q^{1/7})}{\varphi(q^7)} = 1 + u + v + w.\tag{i}
\end{equation}
Then
\begin{equation}\label{7:def:p}
p := uvw = \frac{8q^2 (-q; q^2)_\infty}{(-q^7; q^{14})^7_\infty}\tag{ii}
\end{equation}
and
\begin{align}\label{7:phi-to-p}
\frac{\varphi^8(q)}{\varphi^8(q^7)} - (2 + 5p)\frac{\varphi^4(q)}{\varphi^4(q^7)} + (1 - p)^3 = 0.\tag{iii}
\end{align}
Furthermore,
\begin{equation}\label{7:u,v,w}
u = \bigg(\frac{\alpha^2 p}{\beta}\bigg)^{1/7}, \quad
v = \bigg(\frac{\beta^2 p}{\gamma}\bigg)^{1/7}, \text{\quad and \quad}
w = \bigg(\frac{\gamma^2 p}{\alpha}\bigg)^{1/7},\tag{iv}
\end{equation}
where $\alpha, \beta,$ and $\gamma$ are the roots of the equation
\begin{equation}\label{7:r}
r(\xi) := \xi^3 + 2\xi^2\bigg(1 + 3p - \frac{\varphi^4(q)}{\varphi^4(q^7)}\bigg) + \xi p^2(p+4) - p^4 = 0.\tag{v}
\end{equation}
For example,
\begin{equation}\label{7:enigmatic}
\varphi(e^{-7\pi\sqrt{7}}) = 7^{-3/4}\varphi(e^{-\pi\sqrt{7}})\Big\{1 + (\quad)^{2/7} + (\quad)^{2/7} + (\quad)^{2/7}\Big\}.\tag{vi}
\end{equation}
\reqnomode
\end{entry}

We remark that $0 < |q| < 1$, and $u, v,$ and $w$ depend on the order of $\alpha, \beta,$ and $\gamma$. Parts~\eqref{7:1+u+v+w}--\eqref{7:r} were proved by Seung Hwan Son \cite{Son}, \cite[pp.~198--200]{Son2}, \cite[pp.~180--194]{AndrewsBerndtII}. Part~\eqref{7:enigmatic} was completed by the second author in \cite[Theorem~4.1]{Rebak}, by finding the three missing terms given as follows.

\begin{theorem}\label{thm:enigmatic} We have
\begin{equation*}
\varphi(e^{-7\pi\sqrt{7}}) = 7^{-3/4}\varphi(e^{-\pi\sqrt{7}})\Bigg\{1 + \bigg(\frac{\cos\frac{\pi}{7}}{2\cos^2\frac{2\pi}{7}}\bigg)^{2/7} + \bigg(\frac{\cos\frac{2\pi}{7}}{2\cos^2\frac{3\pi}{7}}\bigg)^{2/7} + \bigg(\frac{\cos\frac{3\pi}{7}}{2\cos^2\frac{\pi}{7}}\bigg)^{2/7}\Bigg\}.
\end{equation*}
\end{theorem}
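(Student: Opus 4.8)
The plan is to specialize Entry~\ref{entry:Ramanujan206} to $q = e^{-\pi/\sqrt 7}$ and to observe that, after one use of the theta transformation formula, the resulting instance of \eqref{7:1+u+v+w} is precisely the skeleton \eqref{7:enigmatic}. With this $q$ one has $q^{1/7} = e^{-\pi/7^{3/2}}$ and $q^7 = e^{-\pi\sqrt 7}$. The transformation $\varphi(e^{-\pi t}) = t^{-1/2}\varphi(e^{-\pi/t})$, applied with $t = 7^{-3/2}$, gives $\varphi(e^{-\pi/7^{3/2}}) = 7^{3/4}\varphi(e^{-7\pi\sqrt 7})$, so \eqref{7:1+u+v+w} reads
\[
\frac{7^{3/4}\,\varphi(e^{-7\pi\sqrt 7})}{\varphi(e^{-\pi\sqrt 7})} = 1 + u + v + w,
\]
which rearranges to $\varphi(e^{-7\pi\sqrt 7}) = 7^{-3/4}\varphi(e^{-\pi\sqrt 7})(1+u+v+w)$, the exact form of the theorem. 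It therefore suffices to compute $u,v,w$ at this value of $q$ and to show that they are the three displayed terms.

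Next I would determine the two inputs of the cubic \eqref{7:r}. Writing \eqref{7:def:p} as $p = 8q^2\chi(q)/\chi^7(q^7)$ via \eqref{def:chi}, I would express $\chi(e^{-\pi/\sqrt 7})$ and $\chi(e^{-\pi\sqrt 7})$ through the class invariant \eqref{def:G}; using the reciprocity $G_{1/7}=G_7$ together with the classical value $G_7 = 2^{1/4}$, all powers of $2$ and all exponential factors cancel and I expect the clean value $p=1$. For the theta quotient I would use the transformation once more: since $q = e^{-\pi/\sqrt 7}$ and $q^7 = e^{-\pi\sqrt 7}$ are related by $t\mapsto 1/t$, one gets $\varphi(q) = 7^{1/4}\varphi(q^7)$, hence $\varphi^4(q)/\varphi^4(q^7) = 7$; as a consistency check, with $p=1$ equation \eqref{7:phi-to-p} becomes $y^2 - 7y = 0$ in $y = \varphi^4(q)/\varphi^4(q^7)$, confirming $y = 7$. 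Substituting $p=1$ and $\varphi^4(q)/\varphi^4(q^7) = 7$ into \eqref{7:r} collapses the cubic to
\[
\xi^3 - 6\xi^2 + 5\xi - 1 = 0.
\]

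The core of the proof is to exhibit the roots of this cubic in closed form as
\[
\alpha = \frac{1}{4\cos^2\frac{2\pi}{7}},\qquad \beta = \frac{1}{4\cos^2\frac{\pi}{7}},\qquad \gamma = \frac{1}{4\cos^2\frac{3\pi}{7}},
\]
which I would establish by checking that their elementary symmetric functions equal $6$, $5$, and $1$. These reduce to classical heptagonal identities: $\sec^2\frac{\pi}{7}+\sec^2\frac{2\pi}{7}+\sec^2\frac{3\pi}{7}=24$ gives $e_1=6$; $\sum_{k=1}^{3}\cos^2\frac{k\pi}{7}=\frac54$ (equivalently $\cos\frac{2\pi}{7}+\cos\frac{4\pi}{7}+\cos\frac{6\pi}{7}=-\frac12$) gives $e_2=5$; and $\cos\frac{\pi}{7}\cos\frac{2\pi}{7}\cos\frac{3\pi}{7}=\frac18$ gives $e_3=1$. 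With $p=1$ and these roots, the simplification $\alpha^2/\beta = (\cos\frac{\pi}{7}/(2\cos^2\frac{2\pi}{7}))^2$ together with its two cyclic analogues turns \eqref{7:u,v,w} into exactly the three terms of the theorem; since $\cos\frac{k\pi}{7}>0$ for $k\in\{1,2,3\}$, every radicand is positive and the seventh roots are unambiguous positive reals.

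The main difficulty I anticipate is twofold. First, the evaluation $p=1$ is delicate bookkeeping: one must track the fractional powers of $2$ and the exponential factors in \eqref{def:G} carefully, and the cancellation hinges on both $G_7 = 2^{1/4}$ and $G_{1/7} = G_7$. Second, and more essentially, the order of the roots matters, because \eqref{7:u,v,w} is only cyclically symmetric: the two cyclic labelings of $\{\alpha,\beta,\gamma\}$ produce two different sets of values (the reversed orientation yields terms such as $(\cos\frac{3\pi}{7}/(2\cos^2\frac{2\pi}{7}))^{2/7}$, which are not those of the theorem), and only one of them is consistent with \eqref{7:1+u+v+w}. I would fix the correct labeling by matching the sum $1+u+v+w$ to the value forced by the first paragraph; once that labeling is pinned down, the theorem follows at once.
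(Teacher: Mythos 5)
Your proposal is correct and follows essentially the same route as the proof this paper cites for Theorem~\ref{thm:enigmatic} (namely \cite[Theorem~4.1]{Rebak}), and as the paper's own first proof of the ninth-degree analogue, Theorem~\ref{thm:e27}: specialize $q=e^{-\pi/\sqrt7}$, obtain $p=1$ and $\varphi^4(q)/\varphi^4(q^7)=7$ from $G_{1/7}=G_7=2^{1/4}$ and the transformation formula, reduce \eqref{7:r} to $\xi^3-6\xi^2+5\xi-1=0$ with roots $\tfrac14\sec^2(k\pi/7)$, $k=1,2,3$, and read off the three terms from \eqref{7:u,v,w}. The only substantive difference is the root-ordering step: you settle the choice between the two cyclic orientations by numerically matching the candidate sums (legitimate, since the two sums differ by a computable nonzero margin), whereas \cite{Rebak} pins down the order with proved inequalities (the septic counterpart of Lemma~\ref{lemma:9:a,b,c-order}); this is a defensible substitution, and indeed the present paper itself uses \emph{Mathematica} verification for the same purpose in Theorems~\ref{thm:e9sqrt3}--\ref{thm:e81sqrt3}.
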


Berndt and the second author \cite{BerndtRebak2} discussed cubic and quintic analogues of Entry~\ref{entry:Ramanujan206} and Theorem~\ref{thm:enigmatic}, with an extensive list of further examples. The aim of this present paper is to propose a ninth degree analogue.

In Section~\ref{section:ninth}, we give our ninth degree analogue of Entry~\ref{entry:Ramanujan206}. In Section~\ref{section:cubic}, we discuss an entry from Ramanujan's second notebook, and we offer an alternative formulation of our main result. In Section~\ref{section:examples}, we give examples, such as the analogous example for Theorem~\ref{thm:enigmatic}. 

For a rational number $n$, we evaluate values of $\varphi(e^{-\pi\sqrt{n}})$ in terms of trigonometric function values. The first four out of our five examples are given in \cite{BerndtRebak2} in an alternative formulation. We list them in the next theorem.

\begin{theorem}\label{thm:alternative} We have
\leqnomode
\begin{align}
\frac{\varphi(e^{-27\pi})}{\varphi(e^{-3\pi})} &= \frac{1}{3}\left(1 + (\sqrt{3} - 1)\left(\frac{(2(\sqrt{3} + 1))^{1/3} + 1}{(2(\sqrt{3} - 1))^{1/3} - 1}\right)^{1/3}\right)\tag{i}\label{eq:e27-alt},\\
\frac{\varphi(e^{-9\pi\sqrt{3}})}{\varphi(e^{-3\pi\sqrt{3}})} &= \frac{1}{3^{1/4}(1 + 2^{1/3})}\bigg(1 + \bigg(\frac{2}{2^{1/3} - 1}\bigg)^{1/3}\bigg)\label{eq:e9sqrt3-alt}\tag{ii},\\
\frac{\varphi(e^{-27\pi\sqrt{3}})}{\varphi(e^{-\pi\sqrt{3}})} &= \frac{1 + 2^{1/3}}{3^{7/4}}\Bigg(1 + 2^{1/3}(2^{1/3} - 1)\frac{2^{1/3} + 2^{2/3} + 3^{1/3}}{(9 - 2 \cdot 3^{4/3})^{1/3}}\Bigg)\label{eq:e27sqrt3-alt}\tag{iii},
\end{align}
and
\begin{multline}
\frac{\varphi(e^{-81\pi})}{\varphi(e^{-\pi})} = \frac{1 + (2(\sqrt{3} + 1))^{1/3}}{9}\\
\times\left(1 + 2^{1/3}\left(\frac{(2(\sqrt{3} - 1))^{1/3} - 1}{(2(\sqrt{3} + 1))^{1/3} + 1}\right)^{8/9}\left(\frac{3(\sqrt{3} + 1)}{\sqrt{3} + 1 - \left(\frac{(2(\sqrt{3} + 1))^{1/3} + 1}{(2(\sqrt{3} - 1))^{1/3} - 1}\right)^{1/3}} - 2\right)^{1/3}\right)\label{eq:e81-alt}\tag{iv}.
\end{multline}
\reqnomode
\end{theorem}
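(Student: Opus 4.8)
The plan is to obtain all four evaluations as corollaries of the ninth degree analogue of Entry~\ref{entry:Ramanujan206} established in Section~\ref{section:ninth}, specialized to complex multiplication points, in exact parallel with the way Theorem~\ref{thm:enigmatic} is deduced from the septic identity. Writing $q = e^{-\pi\sqrt{n}}$, I would first invoke the classical modular transformation $\varphi(e^{-\pi t}) = t^{-1/2}\varphi(e^{-\pi/t})$, which folds the canonical nonic ratio $\varphi(q^{1/9})/\varphi(q^9)$ into ratios of the shape on the left of \eqref{eq:e27-alt}--\eqref{eq:e81-alt}. The four displayed ratios have argument-exponent ratios $9,3,27,81$, i.e.\ $3^2,3^1,3^3,3^4$, with denominators sitting at $n=9,27,3,1$ respectively. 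Identity~\eqref{eq:e27-alt} is the direct analogue of Theorem~\ref{thm:enigmatic}: apply the nonic identity at $q=e^{-3\pi}$ ($n=9$), where $q^{1/9}=e^{-\pi/3}$ and $q^9=e^{-27\pi}$, and fold the numerator $\varphi(e^{-\pi/3})=\sqrt{3}\,\varphi(e^{-3\pi})$ through the transformation. The ratio $3$ in \eqref{eq:e9sqrt3-alt} is supplied instead by the cubic analogue of Entry~\ref{entry:Ramanujan206} from \cite{BerndtRebak2}, and the ratios $27$ and $81$ in \eqref{eq:e27sqrt3-alt} and \eqref{eq:e81-alt} are reached by composing the nonic and cubic steps along the tower of degree-$3$ isogenies.

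The computational core is the evaluation, at each base point, of the nonic parameter $p$ (the analogue of $p=uvw$ in \eqref{7:def:p}) and of the quotient $\varphi^4(q)/\varphi^4(q^9)$ entering the governing cubic (the analogue of $r(\xi)$ in \eqref{7:r}). Since $p$ is an eta-type quotient in $\chi(q)$ and $\chi(q^9)$, the definition \eqref{def:G} rewrites it through the class invariants $G_n$ and $G_{81n}$; for $n$ a power of $3$ the invariants $G_1,G_3,G_9,G_{27},\dots$ are classical and known in closed form, with $G_1=1$, and a degree-$9$ $P$--$Q$ modular equation in $\chi$ supplies whatever relation between $G_n$ and $G_{81n}$ is required. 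Once $p$ and the cubic's coefficients are pinned down as explicit algebraic numbers, each ratio is reduced to solving a single cubic.

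To produce the compact nested-radical forms displayed rather than an opaque Cardano expression, I would solve these cubics by means of the entry from Ramanujan's second notebook treated in Section~\ref{section:cubic}, which gives a closed-form root of the relevant cubic and underlies the alternative formulation of the main result. Substituting the explicit $p$ then yields the roots $\alpha,\beta,\gamma$ (analogues of \eqref{7:u,v,w}), and $1+u+v+w$ assembles into the stated expressions after rationalizing the inner surds and recognizing the recurring blocks $(2(\sqrt{3}\pm1))^{1/3}\pm1$ and $2^{1/3}-1$; finally I would confirm that these forms agree with those recorded in \cite{BerndtRebak2}.

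I expect the difficulty to be twofold. First, the bookkeeping of the transformation factors—the powers $\tfrac13$, $3^{1/4}$, $3^{-3/4}$, $\tfrac19$ visible on the right-hand sides—through the folding and, for \eqref{eq:e27sqrt3-alt} and \eqref{eq:e81-alt}, through the composition of two nonic/cubic steps, must be tracked exactly. Second, and more seriously, turning the Ramanujan-cubic output into the precise nested radicals shown requires a chain of non-obvious surd identities; matching \eqref{eq:e81-alt} in particular, with its $8/9$-power and doubly nested cube roots, will be the main obstacle. The modular-transformation and class-invariant inputs are standard, so the essential work lies entirely in the explicit solution of the cubics and their reduction to the stated closed forms.
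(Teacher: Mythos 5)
The paper itself contains no derivation of Theorem~\ref{thm:alternative}: all four values are quoted from the literature (\cite{BerndtChan} for \eqref{eq:e27-alt}, and corollaries of \cite{BerndtRebak}, with the four values also appearing in \cite{BerndtRebak2}), so the real question is whether your plan would succeed. Its skeleton (nonic identity at CM points, class invariants, surd reduction) is workable, but two of your structural choices are faulty. First, the normalization: with $q = e^{-3\pi}$ the nonic identity evaluates $\varphi(q^{1/9})/\varphi(q^9) = \varphi(e^{-\pi/3})/\varphi(e^{-27\pi})$, so after folding you get $\varphi(e^{-27\pi})/\varphi(e^{-3\pi}) = \sqrt{3}/(1 + u_1 + u_2 + u_3 + u_4)$, the \emph{reciprocal} of the shape required; matching $\tfrac{1}{3}(1 + (\sqrt{3}-1)(\cdots)^{1/3})$ would then demand a further nontrivial rationalization of a sum of nested surds. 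The paper's Table~\ref{table:ninth} instead takes $q = e^{-\pi\sqrt{n}}$ with $n = 1/9, 1/3, 1/27, 1/81$, so that $\varphi(q^9)$ is the denominator and only $\varphi(q^{1/9})$ is folded. Second, your tool assignment fails for \eqref{eq:e9sqrt3-alt}: one folded application of a degree-$d$ identity relates $\varphi(e^{-d\pi/\sqrt{n}})$ to $\varphi(e^{-d\pi\sqrt{n}})$, i.e.\ it pairs exponents whose product is $d^2\pi^2$. The pair $(9\pi\sqrt{3},\, 3\pi\sqrt{3})$ has product $81\pi^2$, so it is reached by a single nonic step at $n = 1/3$ (this is Theorem~\ref{thm:e9sqrt3}), but it is \emph{not} reachable by the cubic analogue, whose steps pair exponents with product $9\pi^2$; likewise \eqref{eq:e27sqrt3-alt} and \eqref{eq:e81-alt} need no composition along the isogeny tower, just single nonic steps at $n = 1/27$ and $n = 1/81$.

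The deeper gap is that routing through the cubic $r(\xi)$ of Theorem~\ref{thm:ninth} and Entry~\ref{entry:Ramanujan356} is both unnecessary and incomplete as you describe it: you never address which root of the quadratic in Theorem~\ref{thm:ninthrestate} (equivalently, which ordering of $\alpha, \beta, \gamma$) produces $u_1 + u_2 + u_4$, a point for which the paper needs Lemma~\ref{lemma:9:a,b,c-order} and explicit verifications. In fact the radical forms require no cubic at all. By Lemma~\ref{lemma:u_3}\eqref{u_3-i},
\begin{equation*}
\frac{\varphi(q^{1/9})}{\varphi(q^9)} = \big(1 + u_3(q^{1/9})\big)\big(1 + u_3(q)\big),
\end{equation*}
and by Lemma~\ref{lemma:u_3}\eqref{u_3-iii} with \eqref{def:G}, $u_3 = \sqrt{2}\,G_{9n}/G_{81n}^3$ (Lemma~\ref{lemma:p-u_3-G}\eqref{u_3-G}). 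Inserting $G_1, G_3, G_9, G_{27}, G_{81}$, Watson's $G_{243}$, and Lemma~\ref{lemma:G81n} (which supplies $G_{729}$ in terms of $G_{81}$ and $G_9$) reproduces each displayed expression directly. For instance, in \eqref{eq:e81-alt} the factor $(2(\sqrt{3}+1))^{1/3}$ is $u_3(e^{-\pi/9}) = \sqrt{2}G_9$, and the second summand, whose $8/9$ power is just $G_{81}^{-8/3}$, is exactly $u_3(e^{-\pi/81}) = \sqrt{2}G_{729}/G_{81}^3$ once Lemma~\ref{lemma:G81n} is substituted; the ``main obstacle'' you anticipate dissolves. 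So your proposal is repairable, but as written it contains a step that fails (\eqref{eq:e9sqrt3-alt} via the cubic analogue), a misnormalization, an unaddressed branch choice, and it misses the two-line factorization that makes the whole theorem elementary once the class invariants are known.
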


Theorem~\ref{thm:alternative}\eqref{eq:e27-alt} was first proved in print by Berndt and Chan in \cite{BerndtChan}. This value can be obtained by combining \cite[Corollaries~3.12 and~3.5]{BerndtRebak}. Likewise, to obtain Theorem~\ref{thm:alternative}\eqref{eq:e9sqrt3-alt}, combine \cite[Corollaries~3.7 and~3.4]{BerndtRebak}. Finally, Theorem~\ref{thm:alternative}\eqref{eq:e27sqrt3-alt},\eqref{eq:e81-alt} are given in \cite[Corollary~3.27]{BerndtRebak} and \cite[Corollary~3.28]{BerndtRebak}, respectively. All of these values can be expressed in terms of gamma functions. From \cite[pp. 524--525]{WhittakerWatson}, \cite[p.~103]{BerndtIII}, \cite{BerndtChan}, \cite[p.~325]{BerndtV}, and from \cite{Zucker}, \cite{BorweinZucker}, \cite[p.~298]{BorweinBrothersPiAGM},
\begin{equation*}
\varphi(e^{-\pi}) = \frac{\pi^{1/4}}{\Gamma\big(\frac{3}{4}\big)} = \frac{\Gamma\big(\frac{1}{4}\big)}{\sqrt{2} \pi^{3/4}} \qquad \text{and} \qquad
\varphi(e^{-\pi\sqrt{3}}) = \frac{3^{1/8}\Gamma^{3/2}\big(\frac{1}{3}\big)}{2^{2/3}\pi},
\end{equation*}
respectively. The values of $\varphi(e^{-3\pi})$ and $\varphi(e^{-3\pi\sqrt{3}})$ follow from \cite[Corollaries~3.5 and~3.4]{BerndtRebak}.

\section{A ninth degree analogue of Entry~\ref{entry:Ramanujan206}}\label{section:ninth}

In Theorem~\ref{thm:ninth}, we present a ninth degree analogue of Entry~\ref{entry:Ramanujan206}.

\begin{theorem}\label{thm:ninth} For $|q| < 1$, let
\leqnomode
\begin{equation}\label{def:u_k}
u_k := u_k(q) := \frac{2q^{k^2/9} f(q^{9 + 2k}, q^{9 - 2k})}{\varphi(q^9)}, \qquad k = 1, \dots, 4.\tag{0}
\end{equation}
Then,
\begin{equation}\label{9:phi-1-9}
\frac{\varphi(q^{1/9})}{\varphi(q^9)} = 1 + u_1 + u_2 + u_3 + u_4,\tag{i}
\end{equation}
\begin{equation}\label{9:p}
p := u_1 u_2 u_4 = \frac{8 q^{7/3} \chi(q)}{\chi^6(q^9)\chi(q^3)},\tag{ii}
\end{equation}
and
\begin{equation}\label{9:u3}
u_3 = \frac{\varphi(q)}{\varphi(q^9)} - 1.\tag{iii}
\end{equation}
Furthermore, for $0 < |q| < 1$,
\begin{equation}\label{9:u124}
u_1 = \bigg(\frac{\beta\, p}{\alpha}\bigg)^{1/3}, \quad
u_2 = \bigg(\frac{\gamma \, p}{\beta}\bigg)^{1/3}, \quad \text{and} \quad
u_4 = \bigg(\frac{\alpha\, p}{\gamma}\bigg)^{1/3},\tag{iv}
\end{equation}
where $\alpha, \beta,$ and $\gamma$ are the roots of the cubic equation
\begin{equation}\label{9:r}
r(\xi) := \xi^3 - u_3\bigg(\frac{\varphi^2(q)}{\varphi^2(q^9)} + 3\bigg)\bigg(\xi^2 - 2u_3^2\xi\bigg) - p^3 = 0.\tag{v}
\end{equation}
\reqnomode
\end{theorem}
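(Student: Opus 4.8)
The plan is to prove the five parts in sequence, building on the structure that mirrors Son's treatment of the septic case in Entry~\ref{entry:Ramanujan206}. First I would establish the dissection \eqref{9:phi-1-9}. The key observation is that $\varphi(q^{1/9}) = \sum_{n=-\infty}^\infty q^{n^2/9}$, and the exponents $n^2/9$ partition according to the residue of $n$ modulo $9$. Since squares modulo $9$ fall in $\{0,1,4,7\}$, the $n\equiv 0$ part reassembles into $\varphi(q^9)$, while the remaining residue classes collect into the four theta pieces. Writing $n = 9m + k$ for $k=0,\dots,4$ (pairing $\pm k$) and using $(9m\pm k)^2/9 = 9m^2 \pm 2mk + k^2/9$, each residue class sums to $2q^{k^2/9}\sum_m q^{9m^2 \pm 2mk}$, which by the definition \eqref{def:generaltheta} of $f(a,b)$ is exactly $2q^{k^2/9} f(q^{9+2k}, q^{9-2k}) = \varphi(q^9)\,u_k$. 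Dividing by $\varphi(q^9)$ gives \eqref{9:phi-1-9} after identifying the $k=0$ term.

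Next I would prove \eqref{9:u3} and \eqref{9:p}. For \eqref{9:u3}, the identity $u_3 = \varphi(q)/\varphi(q^9) - 1$ should follow from an analogous $9$-dissection of $\varphi(q)$: writing $\varphi(q) = \sum q^{n^2}$ and splitting modulo $3$ (or directly recognizing $2q\,f(q^{15},q^3)/\varphi(q^9)$ via the $k=3$ term evaluated at the appropriate power) isolates $u_3$ as the single ``middle'' term that combines with $\varphi(q^9)$ to reconstruct $\varphi(q)$. For \eqref{9:p}, I would convert each $u_k$ to its Jacobi-triple-product form using \eqref{Jacobi-triple-product}, so that $u_1u_2u_4$ becomes a product of infinite products; the factors should telescope and collapse, with most cyclotomic-type pieces cancelling, leaving the compact shape $8q^{7/3}\chi(q)/(\chi^6(q^9)\chi(q^3))$ after invoking \eqref{def:chi} and standard product manipulations for $\chi$, $\varphi$, and $(q;q)_\infty$.

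The parts \eqref{9:u124} and \eqref{9:r} are the heart of the matter and where the main obstacle lies. The strategy is to introduce $\alpha := u_1^2 u_4/p$, $\beta := u_2^2 u_1/p$, $\gamma := u_4^2 u_2/p$ (or the cyclic assignment dictated by inverting \eqref{9:u124}), so that the ratios $\beta/\alpha$, $\gamma/\beta$, $\alpha/\gamma$ reproduce $u_1^3/p, u_2^3/p, u_4^3/p$ and hence recover \eqref{9:u124} by taking cube roots. With this choice, $\alpha\beta\gamma = (u_1u_2u_4)^3/p^3 \cdot p^{\,?}$ should reduce to $p^3$, giving the constant term of \eqref{9:r}. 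The hard part will be verifying the symmetric functions $\alpha+\beta+\gamma$ and $\alpha\beta+\beta\gamma+\gamma\alpha$ match the coefficients $u_3(\varphi^2(q)/\varphi^2(q^9)+3)$ and $2u_3^3(\varphi^2(q)/\varphi^2(q^9)+3)$ prescribed in \eqref{9:r}. This requires nontrivial theta-function identities relating the power sums of the $u_k$ to $\varphi(q)/\varphi(q^9)$ and $u_3$; I expect these to come from addition formulas for $f(a,b)$ and from a modular equation of degree $9$ that expresses $\sum u_k$ and $\sum u_k^2$ in terms of $\varphi$-ratios.

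Concretely, I would first derive the two auxiliary relations $u_1 + u_2 + u_4 = $ (something expressible via $u_3$ and the $\varphi$-ratio) and $u_1 u_2 + u_2 u_4 + u_4 u_1 = $ (a second such expression), by manipulating the triple-product representations of the $u_k$ and using \eqref{9:u3} to eliminate $u_3$. Granting these, the elementary symmetric functions of $\alpha,\beta,\gamma$ become rational expressions in $p$, $u_3$, and $\varphi^2(q)/\varphi^2(q^9)$, and a direct comparison with the Vieta relations for \eqref{9:r} finishes the proof. The delicate bookkeeping — tracking fractional powers of $q$, confirming the precise cyclic order so that the cube roots in \eqref{9:u124} are the correct branches for $0<|q|<1$, and reconciling the normalizations so that $\alpha\beta\gamma = p^3$ rather than a stray power of $p$ — is the step most likely to require care.
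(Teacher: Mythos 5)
Your outline for parts (i)--(iii) is sound and is in substance the paper's own: part (i) is the $9$-dissection of $\varphi(q^{1/9})$ (the paper derives it from Entry~31 via Lemma~\ref{lemma:phi-sum-general}, which is the same computation as your splitting of $n$ modulo $9$); part (iii) is the $3$-dissection of $\varphi(q)$; and part (ii) is the same telescoping of Jacobi triple products (the paper computes $u_1u_2u_3u_4$ and divides by $u_3$ using Lemma~\ref{lemma:u_3}\eqref{u_3-iii}, but computing $u_1u_2u_4$ directly works equally well).

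The genuine gap is in parts (iv)/(v). Leaving aside that your tentative assignment is off (with $\alpha=u_1^2u_4/p$, $\beta=u_2^2u_1/p$ one gets $\beta/\alpha=u_2^2/(u_1u_4)\neq u_1^3/p$ and $\alpha\beta\gamma=1$ rather than $p^3$; the correct choice is \eqref{def:alpha,beta,gamma}), the step that fails is your last one. By Vieta applied to \eqref{9:r}, what must be evaluated are the two cyclic sums $c_1:=u_1u_2^2+u_2u_4^2+u_4u_1^2=\alpha+\beta+\gamma$ and $c_2:=u_1^2u_2+u_2^2u_4+u_4^2u_1=(\alpha\beta+\beta\gamma+\gamma\alpha)/p$ \emph{individually}. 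These are invariant only under cyclic permutations of $(u_1,u_2,u_4)$, not all permutations, so they are not determined by the elementary symmetric functions $e_1,e_2,e_3=p$ you propose to compute: symmetric data gives only the sum $c_1+c_2=e_1e_2-3e_3$, while the difference $c_1-c_2=(u_1-u_2)(u_2-u_4)(u_4-u_1)$ is antisymmetric and invisible to it, so ``direct comparison with the Vieta relations'' cannot finish. (It is moreover doubtful that $e_2$ has any usable closed form: its three terms carry exponents with the distinct fractional parts $5/9$, $2/9$, $8/9$, so no dissection collects them; even $e_1$ is only accessible through its cube, Lemma~\ref{lemma:u_1+u_2+u_4}.) The missing idea is the paper's operator $\mathcal{M}_\alpha$ of \eqref{def:M}, which extracts from a power series the terms whose exponents have fractional part $\alpha$. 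Since $u_1,u_2,u_4$ carry fractional parts $1/9,4/9,7/9$, the three blocks of $(u_1+u_2+u_4)^3=(u_1^3+u_2^3+u_4^3+6p)+3c_1+3c_2$ lie in the distinct exponent classes $1/3$, $0$, $2/3$ modulo~$1$. The paper therefore first proves a single closed form for $(u_1+u_2+u_4)^3$ in terms of $u_3$ and $p$ (Lemma~\ref{lemma:u_1+u_2+u_4}\eqref{u_1+u_2+u_4-ii}, resting on the degree-$3$ relations in Lemma~\ref{lemma:p}\eqref{p-ii} and Lemma~\ref{lemma:u_3}\eqref{u_3-iv}), rearranges it via Lemma~\ref{lemma:u_3-and-p} so that the left-hand side also splits into classes $0,1/3,2/3$, and then applies $\mathcal{M}_0,\mathcal{M}_{1/3},\mathcal{M}_{2/3}$ to break that one identity into the three identities of Lemma~\ref{lemma:u_1,u_2,u_4-split}, which are exactly the needed coefficients of $r$. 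Without this (or some equivalent) separation device, your plan cannot pin down the quadratic and linear coefficients of \eqref{9:r}.
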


Throughout the paper, we use the notation introduced in Theorem~\ref{thm:ninth}. Note that
\begin{equation*}
u_1 = \frac{2q^{1/9} f(q^{11}, q^7)}{\varphi(q^9)}, \quad 
u_2 = \frac{2q^{4/9} f(q^{13}, q^5)}{\varphi(q^9)}, \quad 
u_3 = \frac{2q f(q^{15}, q^3)}{\varphi(q^9)}, \quad 
u_4 = \frac{2q^{16/9} f(q^{17}, q)}{\varphi(q^9)}.
\end{equation*}
Also note that, by Theorem~\ref{thm:ninth}\eqref{9:u3}, we can express $r$ in terms of $u_3$ and $p$ as
\begin{equation*}
r(\xi) = \xi^3 - u_3(u_3^2 + 2u_3 + 4)\xi^2 + 2u_3^3(u_3^2 + 2u_3 + 4)\xi - p^3,
\end{equation*}
and as we shall see in Lemma~\ref{lemma:u_3}\eqref{u_3-iv}, $p^3$ is a rational function in $u_3$. 

There are some differences compared to Entry~\ref{entry:Ramanujan206}. Here, $u_3$ is not obtained from the roots of $r$, and in Theorem~\ref{thm:ninth}\eqref{9:u124}, the roots are employed in a different form than in Entry~\ref{entry:Ramanujan206}\eqref{7:u,v,w}. The order of the roots $\alpha, \beta,$ and $\gamma$ in Theorem~\ref{thm:ninth}\eqref{9:u124} is not provided. We shall give a solution for this in Lemma~\ref{lemma:9:a,b,c-order}. Without loss of generality, one can assume that
\begin{equation}\label{def:alpha,beta,gamma}
\alpha := u_2 u_4^2, \quad \beta := u_4 u_1^2, \quad \text{and} \quad \gamma := u_1 u_2^2.
\end{equation}

In order to establish Theorem~\ref{thm:ninth} toward the end of this section, we shall need some lemmas. We use Entry~31 of Chapter~16 of Ramanujan's second notebook \cite[p.~200]{RamanujanEarlierII}, \cite[pp.~48--49]{BerndtIII}.

\begin{lemma}\label{lemma:entry31} Let $\mathcal{U}_k = a^{k(k+1)/2}b^{k(k-1)/2}$ and $\mathcal{V}_k = a^{k(k-1)/2}b^{k(k+1)/2}$ for each nonnegative integer $k$. Then, for each positive integer $n$,
\begin{equation*}
f(\mathcal{U}_1, \mathcal{V}_1) =  \sum_{k=0}^{n-1} \mathcal{U}_k f\bigg(\frac{\mathcal{U}_{n+k}}{\mathcal{U}_k},\frac{\mathcal{V}_{n-k}}{\mathcal{U}_k}\bigg), \qquad |ab| < 1, \quad ab \neq 0.
\end{equation*}
\end{lemma}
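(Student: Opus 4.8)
The plan is to prove this by dissecting the defining bilateral series of $f(a,b)$ according to residue classes of the summation index modulo $n$. First I would observe that $\mathcal{U}_1 = a$ and $\mathcal{V}_1 = b$, so that the left-hand side is simply $f(a,b)$, and by \eqref{def:generaltheta},
\begin{equation*}
f(a,b) = \sum_{m=-\infty}^{\infty} a^{m(m+1)/2} b^{m(m-1)/2}.
\end{equation*}
Since $|ab| < 1$, this series converges absolutely, which justifies rearranging its terms. I would then split the index set $\Z$ into the $n$ residue classes $m = nj + k$ with $0 \le k \le n-1$ and $j \in \Z$, turning the single sum into $\sum_{k=0}^{n-1}\sum_{j=-\infty}^{\infty}$.

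The heart of the argument is to show that, for each fixed $k$, the inner sum over $j$ equals the $k$-th summand on the right-hand side. To this end I would compute the base parameters $A := \mathcal{U}_{n+k}/\mathcal{U}_k$ and $B := \mathcal{V}_{n-k}/\mathcal{U}_k$ explicitly, finding
\begin{equation*}
A = a^{n(n+2k+1)/2} b^{n(n+2k-1)/2}, \qquad B = a^{n(n-2k-1)/2} b^{n(n-2k+1)/2},
\end{equation*}
so that $AB = (ab)^{n^2}$ and hence $|AB| < 1$, which guarantees convergence of the inner theta function $f(A,B) = \sum_j A^{j(j+1)/2} B^{j(j-1)/2}$. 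It then remains to verify, by matching the exponents of $a$ and of $b$ separately, that
\begin{equation*}
\mathcal{U}_k\, A^{j(j+1)/2} B^{j(j-1)/2} = a^{(nj+k)(nj+k+1)/2}\, b^{(nj+k)(nj+k-1)/2},
\end{equation*}
i.e. that the generic term of $\mathcal{U}_k f(A,B)$ with index $j$ coincides with the generic term of $f(a,b)$ with index $m = nj + k$.

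I expect the main (though routine) obstacle to be this exponent bookkeeping: one must check that the quadratic-in-$j$ expression $\tfrac{k(k+1)}{2} + \tfrac{n(n+2k+1)}{2}\cdot\tfrac{j(j+1)}{2} + \tfrac{n(n-2k-1)}{2}\cdot\tfrac{j(j-1)}{2}$ for the $a$-exponent, and the analogous combination for the $b$-exponent, simplify to $(nj+k)(nj+k+1)/2$ and $(nj+k)(nj+k-1)/2$ respectively. These are polynomial identities in $j, k, n$; expanding and collecting the coefficients of $j(j+1)$ and $j(j-1)$ reduces the verification to the elementary relations $(n+2k+1)+(n-2k-1) = 2n$ and $(n+2k+1)-(n-2k-1) = 4k+2$, after which everything falls into place. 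Summing the matched terms over all $j$ for each $k$, and then over $k = 0, \dots, n-1$, reassembles $f(a,b)$ and yields the claimed decomposition.
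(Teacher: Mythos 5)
Your proof is correct: dissecting the bilateral series for $f(a,b)$ into residue classes $m = nj+k$ modulo $n$ (justified by absolute convergence for $|ab|<1$), computing $A = \mathcal{U}_{n+k}/\mathcal{U}_k = a^{n(n+2k+1)/2}b^{n(n+2k-1)/2}$ and $B = \mathcal{V}_{n-k}/\mathcal{U}_k = a^{n(n-2k-1)/2}b^{n(n-2k+1)/2}$, and matching exponents term by term is exactly the standard argument, and your exponent bookkeeping checks out. The paper itself gives no proof beyond a citation to Berndt's book \cite[pp.~48--49, Entry~31]{BerndtIII}, where the proof is precisely this series rearrangement, so your approach coincides with the paper's (cited) one.
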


\begin{proof}
See Berndt's book \cite[pp.~48--49, Entry~31]{BerndtIII}.
\end{proof}

Theorem~\ref{thm:ninth}\eqref{9:phi-1-9} holds in the following more general form.

\begin{lemma}\label{lemma:phi-sum-general} For any positive odd integer $n$ and $|q| < 1$,
\begin{equation*}
\frac{\varphi(q^{1/n})}{\varphi(q^n)} = 1 + \sum_{k = 1}^{(n - 1)/2} \frac{2 q^{k^2/n} f(q^{n + 2k}, q^{n - 2k})}{\varphi(q^n)}.
\end{equation*}
\end{lemma}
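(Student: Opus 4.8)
The plan is to read off the $n$-dissection of $\varphi(q^{1/n})$ directly from Lemma~\ref{lemma:entry31} and then fold the resulting sum in half using the elementary symmetries of the general theta function.

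First I would specialize Lemma~\ref{lemma:entry31} by taking $a = b = q^{1/n}$. With this choice one has $\mathcal{U}_k = \mathcal{V}_k = q^{k^2/n}$ for every $k$, since $k(k+1)/2 + k(k-1)/2 = k^2$; in particular $f(\mathcal{U}_1, \mathcal{V}_1) = f(q^{1/n}, q^{1/n}) = \varphi(q^{1/n})$. The ratios on the right-hand side then collapse to
\[
\frac{\mathcal{U}_{n+k}}{\mathcal{U}_k} = q^{[(n+k)^2 - k^2]/n} = q^{n+2k}, \qquad \frac{\mathcal{V}_{n-k}}{\mathcal{U}_k} = q^{[(n-k)^2 - k^2]/n} = q^{n-2k},
\]
so applying the lemma with dissection parameter $n$ yields
\[
\varphi(q^{1/n}) = \sum_{k=0}^{n-1} q^{k^2/n}\, f(q^{n+2k}, q^{n-2k}).
\]
The $k=0$ term is $f(q^n, q^n) = \varphi(q^n)$, which, after dividing through by $\varphi(q^n)$, accounts for the leading $1$.

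It then remains to show that the terms indexed by $k = 1, \dots, n-1$ coincide in pairs, producing the factor of $2$ and the truncated range $1 \le k \le (n-1)/2$. Since $n$ is odd, the involution $k \mapsto n-k$ has no fixed point on $\{1, \dots, n-1\}$ and splits it into exactly $(n-1)/2$ pairs. I would compare the term at index $n-k$, namely $q^{(n-k)^2/n} f(q^{3n-2k}, q^{2k-n})$, with the term at index $k$. The second argument $q^{2k-n}$ carries a \emph{negative} exponent, and the key step is to recast it using the standard quasi-periodicity of $f$,
\[
f(a,b) = a^{m(m+1)/2} b^{m(m-1)/2} f\big(a(ab)^m, b(ab)^{-m}\big),
\]
valid for every integer $m$ and immediate from the series \eqref{def:generaltheta} by shifting the summation index by $m$. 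Choosing $m = -1$, together with the symmetry $f(a,b) = f(b,a)$, transforms $f(q^{3n-2k}, q^{2k-n})$ into $q^{2k-n} f(q^{n+2k}, q^{n-2k})$.

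Finally I would track the powers of $q$: the prefactor of the $n-k$ term becomes $q^{(n-k)^2/n + (2k-n)} = q^{k^2/n}$, using $(n-k)^2/n = n - 2k + k^2/n$, so the $n-k$ term is \emph{identical} to the $k$ term. Hence $\sum_{k=1}^{n-1} = 2\sum_{k=1}^{(n-1)/2}$, and dividing by $\varphi(q^n)$ gives the claim. The one point needing care is the negative exponent $q^{2k-n}$: the bilateral series for $f$ still converges because $ab = q^{2n}$ has modulus less than $1$, and the quasi-periodicity relation is precisely the device that returns the term to standard form so that the two $q$-powers combine correctly. I expect this bookkeeping — the correct choice $m=-1$ and the cancellation $(n-k)^2/n + 2k - n = k^2/n$ — to be the only genuine obstacle; everything else is a direct specialization of Lemma~\ref{lemma:entry31}.
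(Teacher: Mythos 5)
Your proposal is correct and follows essentially the same route as the paper: both specialize Lemma~\ref{lemma:entry31} at $a=b=q^{1/n}$, identify the $k=0$ term with $\varphi(q^n)$, and fold the sum via the pairing $k \leftrightarrow n-k$, where your quasi-periodicity identity with $m=-1$ (plus the symmetry $f(a,b)=f(b,a)$) is exactly the paper's cited relation $f(a,b) = a f(\tfrac{1}{a}, a^2 b)$ from Entry~18 used to show $\widetilde{u}_k = \widetilde{u}_{n-k}$. The only cosmetic omission is the trivial case $q=0$, which the paper dispatches separately since the negative exponents $q^{n-2k}$ (for $k > n/2$) and the hypothesis $ab \neq 0$ in Lemma~\ref{lemma:entry31} require $q \neq 0$.
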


\begin{proof}
The case $q = 0$ is trivial. Otherwise, from Entry~18(i),(iv) in \cite[p.~197]{RamanujanEarlierII}, \cite[pp.~34--35]{BerndtIII}, we obtain the identity
\begin{equation}\label{f-a-1/a}
f(a, b) = a f(\tfrac{1}{a}, a^2 b), \qquad |ab| < 1, \quad a \neq 0.
\end{equation}
Now, for a positive integer $n$ and $0 < |q| < 1$, let 
\begin{equation}\label{def:u-tilde}
\widetilde{u}_k := q^{k^2/n} f(q^{n + 2k}, q^{n - 2k}), \qquad k = 0,\dots, n - 1.
\end{equation}
Because of \eqref{f-a-1/a}, we have $\widetilde{u}_k = \widetilde{u}_{n-k}$, for $k = 1, \dots, n - 1$. By applying Lemma~\ref{lemma:entry31} for $(a, b) = (q^{1/n}, q^{1/n})$, with the aid of \eqref{def:phi}, \eqref{f-a-1/a}, and \eqref{def:u-tilde}, for each odd $n$, we find that
\begin{equation*}
\varphi(q^{1/n}) = \sum_{k = 0}^{n - 1} \widetilde{u}_k = \varphi(q^n) + \sum_{k = 1}^{n - 1} \widetilde{u}_k = \varphi(q^n) + \sum_{k = 1}^{(n - 1)/2} 2\widetilde{u}_k.
\end{equation*}
After rearrangement, the proof is complete.
\end{proof}

Next, we propose some useful lemmas. Lemma~\ref{lemma:u_3} gives different representations for $u_3$, and a connection between $u_3(q^{1/3})$ and $u_3(q)$. Lemma~\ref{lemma:p} gives expressions for $p$.

\begin{lemma}\label{lemma:u_3} For $|q| < 1$,
\leqnomode
\begin{align}
u_3(q) &= \frac{\varphi(q)}{\varphi(q^9)} - 1\tag{i}\label{u_3-i},\\
u_3(q) &= \bigg(\frac{\varphi^4(q^3)}{\varphi^4(q^9)} - 1\bigg)^{1/3}\tag{ii}\label{u_3-ii},\\
u_3(q) &= \frac{2q\chi(q^3)}{\chi^3(q^9)}\tag{iii}\label{u_3-iii},
\intertext{and}
u_3(q^{1/3}) &= \Bigg(\frac{(u_3(q) + 1)^4}{u_3^3(q) + 1} - 1\Bigg)^{1/3}\tag{iv}\label{u_3-iv}.
\end{align}
\reqnomode
\end{lemma}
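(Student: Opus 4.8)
The plan is to prove the four identities in the order \eqref{u_3-i}, \eqref{u_3-iii}, \eqref{u_3-ii}, \eqref{u_3-iv}, because \eqref{u_3-ii} is cleanest to deduce from the product form \eqref{u_3-iii}, while \eqref{u_3-iv} is a purely algebraic consequence of \eqref{u_3-i} and \eqref{u_3-ii}. Identity \eqref{u_3-i} is immediate from Lemma~\ref{lemma:phi-sum-general}: taking $n = 3$ there gives $\varphi(q^{1/3})/\varphi(q^3) = 1 + 2q^{1/3}f(q^5, q)/\varphi(q^3)$, and replacing $q$ by $q^3$ yields $\varphi(q)/\varphi(q^9) = 1 + 2q\, f(q^{15}, q^3)/\varphi(q^9)$, whose second summand is exactly $u_3(q)$ by \eqref{def:u_k}.

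For \eqref{u_3-iii} I would start from $u_3(q) = 2q\, f(q^{15}, q^3)/\varphi(q^9)$ and compute the quotient $f(q^{15}, q^3)/\varphi(q^9)$ by the Jacobi triple product \eqref{Jacobi-triple-product} and \eqref{def:phi}. With $a = q^{15}$, $b = q^3$, $ab = q^{18}$ the triple product gives $f(q^{15}, q^3) = (-q^{15}; q^{18})_\infty(-q^3; q^{18})_\infty(q^{18}; q^{18})_\infty$, while $\varphi(q^9) = (-q^9; q^{18})_\infty^2(q^{18}; q^{18})_\infty$, so the common factor $(q^{18}; q^{18})_\infty$ cancels. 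The only nonroutine step is the rearrangement $(-q^3; q^6)_\infty = (-q^3; q^{18})_\infty(-q^9; q^{18})_\infty(-q^{15}; q^{18})_\infty$, which holds because the odd multiples of $3$ split into the residues $3, 9, 15 \pmod{18}$. Since $\chi(q^3) = (-q^3; q^6)_\infty$ and $\chi^3(q^9) = (-q^9; q^{18})_\infty^3$ by \eqref{def:chi}, this identifies $f(q^{15}, q^3)/\varphi(q^9)$ with $\chi(q^3)/\chi^3(q^9)$, which is \eqref{u_3-iii}.

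The main obstacle is \eqref{u_3-ii}, which is equivalent to $u_3^3(q) + 1 = \varphi^4(q^3)/\varphi^4(q^9)$. Substituting \eqref{u_3-iii} and writing $P = q^3$ (so that $q^9 = P^3$) turns this into the degree-three modular equation
\begin{equation*}
\frac{\varphi^4(P)}{\varphi^4(P^3)} = 1 + \frac{8P\,\chi^3(P)}{\chi^9(P^3)}.
\end{equation*}
I would establish this within Ramanujan's theory of modular equations: writing $z = \varphi^2(P)$ and letting $\alpha, \beta$ be the moduli associated with $P$ and $P^3$, the left-hand side becomes the square of the multiplier $m = \varphi^2(P)/\varphi^2(P^3)$, while $\chi(P)$ and $\chi(P^3)$ admit the standard expressions in $z, \alpha, \beta$; the identity then follows from the classical degree-three relation $(\alpha\beta)^{1/4} + \{(1 - \alpha)(1 - \beta)\}^{1/4} = 1$ together with the known parametrizations of $\alpha$ and $\beta$ by $m$. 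Equivalently, one may convert both sides into eta-quotients and verify the resulting identity of modular functions on $\Gamma_0(N)$ for a suitable level $N$. This modular-equation input is the real content of the lemma; the remaining steps are formal.

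Finally, \eqref{u_3-iv} is algebra once \eqref{u_3-i} and \eqref{u_3-ii} are available. By \eqref{u_3-i} we have $(u_3(q) + 1)^4 = \varphi^4(q)/\varphi^4(q^9)$, and by \eqref{u_3-ii} we have $u_3^3(q) + 1 = \varphi^4(q^3)/\varphi^4(q^9)$, so their quotient equals $\varphi^4(q)/\varphi^4(q^3)$. On the other hand, replacing $q$ by $q^{1/3}$ in \eqref{u_3-ii} gives $u_3^3(q^{1/3}) + 1 = \varphi^4(q)/\varphi^4(q^3)$ as well. Equating the two expressions for $\varphi^4(q)/\varphi^4(q^3)$, subtracting $1$, and taking the real cube root — legitimate for $0 < q < 1$, where every quantity is positive, and extended to $|q| < 1$ by analyticity — gives $u_3(q^{1/3}) = \{(u_3(q) + 1)^4/(u_3^3(q) + 1) - 1\}^{1/3}$, which is \eqref{u_3-iv}.
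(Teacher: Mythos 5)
Your proposal is correct, and it is organized quite differently from the paper's proof, which disposes of (i)--(iii) purely by citation --- (i) from the Corollary to Entry~31 in \cite[p.~49]{BerndtIII}, (ii) from Entry~1(iii) in \cite[pp.~345--346]{BerndtIII} with $q \mapsto q^3$, and (iii) from (4.6) in \cite[p.~330]{BerndtV} with $q \mapsto q^3$ --- and only argues (iv), in exactly the way you do (form $(u_3(q)+1)^4 = \varphi^4(q)/\varphi^4(q^9)$ and $u_3^3(q)+1 = \varphi^4(q^3)/\varphi^4(q^9)$, then put $q \mapsto q^{1/3}$ in (ii) and combine). Your derivation of (i) from Lemma~\ref{lemma:phi-sum-general} ($n = 3$, then $q \mapsto q^3$) is correct, and your triple-product proof of (iii), via the splitting $(-q^3; q^6)_\infty = (-q^3; q^{18})_\infty(-q^9; q^{18})_\infty(-q^{15}; q^{18})_\infty$, is exactly the Son-style rearrangement the paper itself employs later to prove Lemma~\ref{lemma:p}(i); both are genuine, self-contained replacements for the citations. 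The one part you do not fully prove is (ii), which you correctly isolate as the real content: given (iii), it is equivalent to the degree-three modular equation $\varphi^4(P)/\varphi^4(P^3) = 1 + 8P\chi^3(P)/\chi^9(P^3)$. Your sketched route does close: writing $m = \varphi^2(P)/\varphi^2(P^3)$ and using the classical parametrizations $\alpha = (m-1)(3+m)^3/(16m^3)$, $\beta = (m-1)^3(3+m)/(16m)$, $1-\alpha = (m+1)(3-m)^3/(16m^3)$, $1-\beta = (m+1)^3(3-m)/(16m)$, together with $\chi(q) = 2^{1/6}\{\alpha(1-\alpha)/q\}^{-1/24}$ and its degree-three counterpart for $\chi(q^3)$, one finds
\begin{equation*}
\frac{8P\chi^3(P)}{\chi^9(P^3)} = 4\,\frac{\{\beta(1-\beta)\}^{3/8}}{\{\alpha(1-\alpha)\}^{1/8}} = m^2 - 1,
\end{equation*}
which is the assertion. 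As written, however, this step is an outline resting on standard facts that you neither prove nor cite (the multiplier parametrization and the evaluation of $\chi$ in terms of $\alpha$ and $q$), so in substance you defer to the classical theory at precisely the point where the paper defers to Entry~1(iii). The trade-off: your version makes (i) and (iii) self-contained within the paper's own toolkit, while the paper's version is shorter and attributes each identity to a precise source.
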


\begin{proof} Part~\eqref{u_3-i} follows from the first equation of~\cite[p.~49, Corollary(i)]{BerndtIII}. Then, \eqref{u_3-ii} follows from the first equation of~\cite[pp.~345--346, Entry~1(iii)]{BerndtIII}, by setting $q \mapsto q^3$. Now, \eqref{u_3-iii} is obtained by~\cite[p.~330, (4.6)]{BerndtV}, with $q \mapsto q^3$. From \eqref{u_3-i} and \eqref{u_3-ii}, we find
\begin{equation}\label{eq:u_3-eqs}
(u_3(q) + 1)^4 = \frac{\varphi^4(q)}{\varphi^4(q^9)} \quad \text{and} \quad u_3^3(q) + 1 = \frac{\varphi^4(q^3)}{\varphi^4(q^9)},
\end{equation}
respectively. Set $q \mapsto q^{1/3}$ in \eqref{u_3-ii} and combine it with \eqref{eq:u_3-eqs} to conclude \eqref{u_3-iv}.
\end{proof}

\begin{lemma}\label{lemma:p} For $|q| < 1$,
\leqnomode
\begin{align}
p &= \frac{8 q^{7/3} \chi(q)}{\chi^6(q^9)\chi(q^3)}\tag{i}\label{p-i},\\
p &= u_3^2(q) u_3(q^{1/3})\tag{ii}\label{p-ii},\\
p &= \bigg(\frac{\varphi(q)}{\varphi(q^9)} - 1\bigg)^2\bigg(\frac{\varphi^4(q)}{\varphi^4(q^3)} - 1\bigg)^{1/3}\tag{iii}\label{p-iii},
\end{align}
and
\begin{equation}
p^3(u_3^2 - u_3 + 1) = u_3^7(u_3^2 + 2u_3 + 4)\tag{iv}\label{p-iv}. 
\end{equation}
\reqnomode
\end{lemma}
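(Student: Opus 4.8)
The plan is to establish \eqref{p-i} first directly from the definitions, and then to deduce \eqref{p-ii}, \eqref{p-iii}, and \eqref{p-iv} from it together with the representations of $u_3$ collected in Lemma~\ref{lemma:u_3}.

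For \eqref{p-i}, I would start from the definition $p = u_1 u_2 u_4$ and the explicit forms of the $u_k$ recorded after Theorem~\ref{thm:ninth}, so that
\begin{equation*}
p = \frac{8\, q^{1/9 + 4/9 + 16/9}\, f(q^{11}, q^7)\, f(q^{13}, q^5)\, f(q^{17}, q)}{\varphi^3(q^9)},
\end{equation*}
where the exponent of $q$ collapses to $7/3$. Each of the three theta functions has product $ab = q^{18}$, so applying the Jacobi triple product \eqref{Jacobi-triple-product} writes every factor as an infinite product to the base $q^{18}$. The three unordered pairs of exponents are $\{7,11\}$, $\{5,13\}$, and $\{1,17\}$, which, together with $\varphi(q^9) = (-q^9;q^{18})_\infty^2 (q^{18};q^{18})_\infty$, reduces $p$ to
\begin{equation*}
p = 8 q^{7/3}\,\frac{\prod_{j\in\{1,5,7,11,13,17\}} (-q^j; q^{18})_\infty}{(-q^9; q^{18})_\infty^6}.
\end{equation*}
The remaining step is bookkeeping: expanding $\chi(q) = (-q;q^2)_\infty$, $\chi(q^3)=(-q^3;q^6)_\infty$, and $\chi(q^9)=(-q^9;q^{18})_\infty$ as products over residues modulo $18$ shows that $\chi(q)$ supplies all odd residues $\{1,3,5,7,9,11,13,15,17\}$, while dividing by $\chi(q^3)$ removes exactly the residues $\{3,9,15\}$, and the six copies of $\chi(q^9)$ produce the exponent on $(-q^9;q^{18})_\infty$. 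Matching these residue classes recovers the displayed right-hand side. I expect this matching to be the part that requires the most care, although it is entirely mechanical.

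For \eqref{p-ii}, I would substitute the product form \eqref{u_3-iii} of $u_3$ and its dilate $u_3(q^{1/3}) = 2q^{1/3}\chi(q)/\chi^3(q^3)$, obtaining
\begin{equation*}
u_3^2(q)\, u_3(q^{1/3}) = \frac{4q^2 \chi^2(q^3)}{\chi^6(q^9)} \cdot \frac{2q^{1/3}\chi(q)}{\chi^3(q^3)} = \frac{8 q^{7/3} \chi(q)}{\chi^6(q^9)\chi(q^3)},
\end{equation*}
which is precisely \eqref{p-i}. Then \eqref{p-iii} follows immediately by rewriting the two factors in \eqref{p-ii} through \eqref{u_3-i} and the dilate of \eqref{u_3-ii}, namely $u_3(q^{1/3}) = (\varphi^4(q)/\varphi^4(q^3) - 1)^{1/3}$.

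Finally, for \eqref{p-iv}, I would cube \eqref{p-ii} to get $p^3 = u_3^6(q)\, u_3^3(q^{1/3})$ and insert the rational expression for $u_3^3(q^{1/3})$ furnished by \eqref{u_3-iv}. Expanding
\begin{equation*}
(u_3 + 1)^4 - (u_3^3 + 1) = u_3(u_3+1)(u_3^2 + 2u_3 + 4)
\end{equation*}
and canceling the factor $u_3 + 1$ against $u_3^3 + 1 = (u_3+1)(u_3^2 - u_3 + 1)$ yields
\begin{equation*}
u_3^3(q^{1/3}) = \frac{u_3(u_3^2 + 2u_3 + 4)}{u_3^2 - u_3 + 1},
\end{equation*}
so that $p^3 = u_3^7(u_3^2 + 2u_3 + 4)/(u_3^2 - u_3 + 1)$, which is \eqref{p-iv} after clearing the denominator.
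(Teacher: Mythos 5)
Your proof is correct, and parts \eqref{p-ii}--\eqref{p-iv} coincide with the paper's own argument: the same splitting of the $\chi$-product for \eqref{p-ii}, the same substitution of Lemma~\ref{lemma:u_3}\eqref{u_3-i},\eqref{u_3-ii} for \eqref{p-iii}, and the same cubing of \eqref{p-ii} combined with Lemma~\ref{lemma:u_3}\eqref{u_3-iv} for \eqref{p-iv}; you merely make explicit the factorizations $(u_3+1)^4-(u_3^3+1)=u_3(u_3+1)(u_3^2+2u_3+4)$ and $u_3^3+1=(u_3+1)(u_3^2-u_3+1)$ that the paper leaves implicit. The only divergence is in \eqref{p-i}: you apply the Jacobi triple product to the three factors of $p=u_1u_2u_4$ directly and then match residue classes modulo $18$, using that dividing $\chi(q)$ by $\chi(q^3)=\prod_{j\in\{3,9,15\}}(-q^j;q^{18})_\infty$ leaves exactly the classes $\{1,5,7,11,13,17\}$. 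The paper instead evaluates the symmetric four-fold product $u_1u_2u_3u_4=16q^{10/3}\chi(q)/\chi^9(q^9)$, for which the covering argument is slightly cleaner (the exponents $9\pm 2k$, $k=1,\dots,4$, run over all odd residues modulo $18$ except $9$), and then divides off $u_3$ via its product representation, Lemma~\ref{lemma:u_3}\eqref{u_3-iii}. Your route avoids invoking Lemma~\ref{lemma:u_3}\eqref{u_3-iii} in part \eqref{p-i} at the price of the extra (correct) bookkeeping identity for $\chi(q^3)$; the two computations are otherwise equivalent, and both are sound.
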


\begin{proof} To prove \eqref{p-i}, we use an argument similar to that of Son~\cite[Lemma~4.1]{Son}, \cite[p.~183, Lemma~8.3.5]{AndrewsBerndtII}. The case $q = 0$ is trivial. Otherwise, apply the Jacobi triple product identity \eqref{Jacobi-triple-product} to establish the representations,
\begin{equation*}
f(q^{9 + 2k}, q^{9 - 2k}) = (-q^{9 + 2k}; q^{18})_\infty (-q^{9 - 2k}; q^{18})_\infty (q^{18}; q^{18})_\infty, \qquad k = 1,\dots,4,
\end{equation*}
and
\begin{equation*}
\varphi(q^9) = f(q^9, q^9) =  (-q^9; q^{18})_\infty^2 (q^{18}; q^{18})_\infty.
\end{equation*}
Thus,
\begin{equation*}
u_1 u_2 u_3 u_4 = \frac{16 q^{10/3}}{\varphi^4(q^9)}\prod_{k = 1}^4 f(q^{9 + 2k}, q^{9 - 2k}) = \frac{16 q^{10/3}}{(-q^9; q^{18})_\infty^8} \prod_{k = 1}^4 (-q^{9 + 2k}; q^{18})_\infty (-q^{9 - 2k}; q^{18})_\infty.
\end{equation*}
Since
\begin{equation*}
(-q; q^2)_\infty = \prod_{k = 1}^9 (-q^{2k - 1}; q^{18})_\infty,
\end{equation*}
and since $9 \pm 2k$, for $k = 1, \dots, 4$, covers $\{1, 3, 5, \dots, 17\} \setminus \{9\}$, and $2k - 1$, for $k = 1, \dots, 9$, covers $\{1, 3, 5, \dots, 17\}$, we find that
\begin{equation*}
u_1 u_2 u_3 u_4 = \frac{16 q^{10/3}(-q; q^2)_\infty}{(-q^9; q^{18})_\infty^9} = \frac{16 q^{10/3}\chi(q)}{\chi^9(q^9)},
\end{equation*}
where $\chi(q)$ is defined in \eqref{def:chi}. Using Lemma~\ref{lemma:u_3}\eqref{u_3-iii}, we conclude that
\begin{equation*}
p = u_1 u_2 u_4 = \frac{16 q^{10/3}\chi(q)}{\chi^9(q^9)} \cdot \frac{\chi^3(q^9)}{2q\chi(q^3)} = \frac{8 q^{7/3} \chi(q)}{\chi^6(q^9)\chi(q^3)}.
\end{equation*}

Now, to prove \eqref{p-ii}, by Lemma~\ref{lemma:u_3}\eqref{u_3-iii}, observe that
\begin{equation*}
p = \frac{8 q^{7/3} \chi(q)}{\chi^6(q^9)\chi(q^3)} = \frac{4 q^2 \chi^2(q^3)}{\chi^6(q^9)} \cdot \frac{2 q^{1/3} \chi(q)}{\chi^3(q^3)} = u_3^2(q) u_3(q^{1/3}).
\end{equation*}
Then, \eqref{p-iii} follows directly by Lemma~\ref{lemma:u_3}\eqref{u_3-i} and Lemma~\ref{lemma:u_3}\eqref{u_3-ii}. Lastly, \eqref{p-iv} can be deduced from \eqref{p-ii} and Lemma~\ref{lemma:u_3}\eqref{u_3-iv}, as
\begin{equation*}
p^3 =  u_3^6(q) u_3^3(q^{1/3}) = u_3^6 \cdot \Bigg(\frac{(u_3 + 1)^4}{u_3^3 + 1} - 1\Bigg) = \frac{u_3^7(u_3^2 + 2u_3 + 4)}{u_3^2 - u_3 + 1}.
\end{equation*}
After rearrangement, we obtain the desired result.
\end{proof}

In the following lemma, we express $u_1 + u_2 + u_4$ in two ways, first with the aid of $\varphi^4(q^{1/3})$, and second in terms of $u_3$ and $p$.

\begin{lemma}\label{lemma:u_1+u_2+u_4} For $|q| < 1$,
\leqnomode
\begin{align}
(u_1 + u_2 + u_4)^3 &= \frac{\varphi^3(q)}{\varphi^3(q^9)}\bigg(\frac{\varphi^4(q^{1/3})}{\varphi^4(q)} - 1\bigg)\tag{i}\label{u_1+u_2+u_4-i},\\
\intertext{and for $0 < |q| < 1$,}
(u_1 + u_2 + u_4)^3 &= (u_3 + 1)^3\left(\frac{\left(\frac{p}{u_3^2} + 1\right)^4}{\left(\frac{p}{u_3^2}\right)^3 + 1} - 1\right)\tag{ii}\label{u_1+u_2+u_4-ii}.
\end{align}
\reqnomode
\end{lemma}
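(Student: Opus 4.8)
The plan is to first reduce the linear combination $u_1 + u_2 + u_4$ to a clean closed form and then cube it, so that everything collapses onto the ``depth one'' relations already recorded in Lemma~\ref{lemma:u_3}. The starting observation is that $u_1 + u_2 + u_4$ is exactly the part of the sum in Lemma~\ref{lemma:phi-sum-general} complementary to $u_3$. Specialising Lemma~\ref{lemma:phi-sum-general} to $n = 9$ gives $u_1 + u_2 + u_3 + u_4 = \varphi(q^{1/9})/\varphi(q^9) - 1$, and subtracting $u_3 = \varphi(q)/\varphi(q^9) - 1$ from Lemma~\ref{lemma:u_3}\eqref{u_3-i} yields
\[
u_1 + u_2 + u_4 = \frac{\varphi(q^{1/9}) - \varphi(q)}{\varphi(q^9)}.
\]
(Equivalently, $u_1 + u_2 + u_4$ collects precisely the terms $q^{n^2/9}$ with $3 \nmid n$, whose sum is $\varphi(q^{1/9}) - \varphi(q)$.)

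For \eqref{u_1+u_2+u_4-i} I would cube this identity and invoke the depth-one relation obtained by combining Lemma~\ref{lemma:u_3}\eqref{u_3-i} and \eqref{u_3-ii}, namely $(\varphi(t)/\varphi(t^9) - 1)^3 = \varphi^4(t^3)/\varphi^4(t^9) - 1$, specialised to $t = q^{1/9}$ (so that $t^3 = q^{1/3}$ and $t^9 = q$). This reads
\[
\left(\frac{\varphi(q^{1/9})}{\varphi(q)} - 1\right)^3 = \frac{\varphi^4(q^{1/3})}{\varphi^4(q)} - 1,
\]
whence $(\varphi(q^{1/9}) - \varphi(q))^3 = \varphi^3(q)\big(\varphi^4(q^{1/3})/\varphi^4(q) - 1\big)$. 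Dividing by $\varphi^3(q^9)$ gives \eqref{u_1+u_2+u_4-i} immediately.

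For \eqref{u_1+u_2+u_4-ii} I would rewrite the right-hand side of \eqref{u_1+u_2+u_4-i} in terms of $u_3$ and $p$. The prefactor is immediate: $\varphi^3(q)/\varphi^3(q^9) = (u_3 + 1)^3$ by Lemma~\ref{lemma:u_3}\eqref{u_3-i}. For the bracketed factor, Lemma~\ref{lemma:p}\eqref{p-ii} gives $p/u_3^2 = u_3(q^{1/3})$, and applying Lemma~\ref{lemma:u_3}\eqref{u_3-i},\eqref{u_3-ii} with $t = q^{1/3}$ shows $(u_3(q^{1/3}) + 1)^4 = \varphi^4(q^{1/3})/\varphi^4(q^3)$ and $u_3^3(q^{1/3}) + 1 = \varphi^4(q)/\varphi^4(q^3)$, whose quotient is exactly $\varphi^4(q^{1/3})/\varphi^4(q)$. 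Substituting these two facts into \eqref{u_1+u_2+u_4-i} produces \eqref{u_1+u_2+u_4-ii}. Alternatively, Lemma~\ref{lemma:u_3}\eqref{u_3-iv} with $q \mapsto q^{1/3}$ identifies the bracketed factor directly as $u_3^3(q^{1/9})$, which matches the cube computed in the previous paragraph via $u_3(q^{1/9}) = (\varphi(q^{1/9}) - \varphi(q))/\varphi(q)$.

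The argument is purely algebraic once the closed form for $u_1 + u_2 + u_4$ is in hand, so I expect no analytic obstacle. The point requiring care is the correct shift of argument in Lemma~\ref{lemma:u_3}: the identities there are stated for $u_3(q)$, and the proof depends on applying them at $t = q^{1/9}$ and $t = q^{1/3}$, tracking $t^3$ and $t^9$ faithfully. One should also note the restriction $0 < |q| < 1$ in \eqref{u_1+u_2+u_4-ii}, forced by the divisions by $u_3$ and $u_3^2$ (both vanishing at $q = 0$), whereas \eqref{u_1+u_2+u_4-i} remains valid for all $|q| < 1$.
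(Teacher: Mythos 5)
Your proposal is correct and follows essentially the same route as the paper: both start from Lemma~\ref{lemma:phi-sum-general} with $n=9$ together with Lemma~\ref{lemma:u_3}\eqref{u_3-i} to get $u_1+u_2+u_4 = (\varphi(q^{1/9})-\varphi(q))/\varphi(q^9) = (\varphi(q)/\varphi(q^9))\,u_3(q^{1/9})$, then evaluate $u_3(q^{1/9})$ via Lemma~\ref{lemma:u_3}\eqref{u_3-ii} for part~\eqref{u_1+u_2+u_4-i} and via Lemma~\ref{lemma:p}\eqref{p-ii} with the shifted identities (equivalently Lemma~\ref{lemma:u_3}\eqref{u_3-iv} at $q\mapsto q^{1/3}$, which is exactly the paper's step) for part~\eqref{u_1+u_2+u_4-ii}. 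Your observation about the $0<|q|<1$ restriction being forced by division by $u_3^2$ is also consistent with the paper.
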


\begin{proof} By Lemma~\ref{lemma:phi-sum-general} for $n = 9$ and Lemma~\ref{lemma:u_3}\eqref{u_3-i}, or from \cite[pp.~349--352, Entry~2(ix)]{BerndtIII},
\begin{equation*}
\frac{\varphi(q^{1/9})}{\varphi(q^9)} = \frac{\varphi(q)}{\varphi(q^9)} + u_1 + u_2 + u_4. 
\end{equation*}
After rearrangement, by Lemma~\ref{lemma:u_3}\eqref{u_3-i},\eqref{u_3-ii}, we deduce that
\begin{equation}\label{eq:u_1+u_2+u_4-eqs-1}
 u_1 + u_2 + u_4 = \frac{\varphi(q)}{\varphi(q^9)}\bigg(\frac{\varphi(q^{1/9})}{\varphi(q)} - 1\bigg)= \frac{\varphi(q)}{\varphi(q^9)}u_3(q^{1/9}) = \frac{\varphi(q)}{\varphi(q^9)}\bigg(\frac{\varphi^4(q^{1/3})}{\varphi^4(q)} - 1\bigg)^{1/3},
\end{equation}
and by Lemma~\ref{lemma:u_3}\eqref{u_3-i},\eqref{u_3-iv}, and Lemma~\ref{lemma:p}\eqref{p-ii}, we find that
\begin{multline}\label{eq:u_1+u_2+u_4-eqs-2}
 u_1 + u_2 + u_4 = \frac{\varphi(q)}{\varphi(q^9)}\bigg(\frac{\varphi(q^{1/9})}{\varphi(q)} - 1\bigg) = (u_3(q) + 1)u_3(q^{1/9})\\
 = (u_3(q) + 1)\Bigg(\frac{(u_3(q^{1/3}) + 1)^4}{u_3^3(q^{1/3}) + 1} - 1\Bigg)^{1/3} = (u_3 + 1)\left(\frac{\left(\frac{p}{u_3^2} + 1\right)^4}{\left(\frac{p}{u_3^2}\right)^3 + 1} - 1\right)^{1/3}.
\end{multline}
Cubing both sides of \eqref{eq:u_1+u_2+u_4-eqs-1} and \eqref{eq:u_1+u_2+u_4-eqs-2}, we obtain \eqref{u_1+u_2+u_4-i} and \eqref{u_1+u_2+u_4-ii}, respectively.
\end{proof}

\begin{lemma}\label{lemma:u_3-and-p} For $0 < |q| < 1$,
\begin{equation*}
\frac{p}{u_3^2}(u_3^3 + 4) + 6p + 3u_3(u_3^2 + 2u_3 + 4)\bigg(\frac{2 u_3^2}{p} + 1\bigg) = (u_3 + 1)^3\left(\frac{\left(\frac{p}{u_3^2} + 1\right)^4}{\left(\frac{p}{u_3^2}\right)^3 + 1} - 1\right).
\end{equation*}
\end{lemma}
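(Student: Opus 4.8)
The plan is to read this as a purely algebraic identity between the two quantities $u_3$ and $p$, which are not free but are linked by Lemma~\ref{lemma:p}\eqref{p-iv}. For motivation, observe that the right-hand side is precisely $(u_1+u_2+u_4)^3$ as evaluated in Lemma~\ref{lemma:u_1+u_2+u_4}\eqref{u_1+u_2+u_4-ii}, and the three summands on the left are morally the pieces $u_1^3+u_2^3+u_4^3$, $6u_1u_2u_4$, and $3\sum_{i\neq j}u_i^2u_j$ of the expansion of that cube. However, identifying the symmetric pieces with $u_3$ and $p$ would presuppose the Vieta relations for the cubic~\eqref{9:r}, which is exactly what we are still working toward; so I would not argue that way, and instead verify the identity head-on.

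First I would clear notation by writing $t := p/u_3^2$, which by Lemma~\ref{lemma:p}\eqref{p-ii} is nothing but $u_3(q^{1/3})$. Then $p = u_3^2 t$, and the factors $2u_3^2/p$, $p/u_3^2$, and so on all turn into monomials in $u_3$ and $t$, so that both sides become rational expressions in these two letters. To simplify the right-hand side I would use the factorizations $(t+1)^4-(t^3+1) = t(t+1)(t^2+2t+4)$ and $t^3+1 = (t+1)(t^2-t+1)$, together with the consequence $t^3+1 = (u_3+1)^3/(u_3^2-u_3+1)$ of Lemma~\ref{lemma:u_3}\eqref{u_3-iv}; then the prefactor $(u_3+1)^3$ cancels cleanly and the right-hand side collapses to $(u_3^2-u_3+1)(t^4+3t^3+6t^2+4t)$.

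Finally I would expand both sides with $p=u_3^2 t$ and reduce the higher powers of $t$ via the relation $(u_3^2-u_3+1)t^3 = u_3(u_3^2+2u_3+4)$, which is just \eqref{p-iv} after dividing by $u_3^6$. After this substitution the terms $t(u_3^3+6u_3^2+4)$ and $3u_3(u_3^2+2u_3+4)$ occur on both sides and cancel, and what remains is the single equation $u_3(u_3^2+2u_3+4) = (u_3^2-u_3+1)t^3$, which is again \eqref{p-iv}. This closes the argument.

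The whole computation is routine once the substitution $t=p/u_3^2$ is in place; the only non-mechanical points, and hence the main (minor) obstacles, are to use $t^3+1=(u_3+1)^3/(u_3^2-u_3+1)$ to strip off the $(u_3+1)^3$ prefactor rather than expanding the quartic in $p$ directly, and to keep track of the negative powers of $t$ (equivalently of $p$) carefully so that the final cancellation lands exactly on \eqref{p-iv}.
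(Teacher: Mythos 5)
Your proposal is correct and takes essentially the same route as the paper: both arguments are a direct algebraic verification that the identity is a consequence of Lemma~\ref{lemma:p}\eqref{p-iv}, the paper by exhibiting the difference of the two sides as the rational multiple $\frac{3(p-2u_3^2)}{pu_3^2(p^2-pu_3^2+u_3^4)}$ of $p^3(u_3^2-u_3+1)-u_3^7(u_3^2+2u_3+4)$, and you by the substitution $t=p/u_3^2$ followed by stepwise reduction, which is the same algebra organized differently. Your opening remark is also well taken: identifying the left-hand side with the symmetric functions of $u_1,u_2,u_4$ would indeed be circular, since Lemma~\ref{lemma:u_1,u_2,u_4-split} is derived from this very lemma.
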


\begin{proof}
After rearrangement, by Lemma~\ref{lemma:p}\eqref{p-iv}, we find that
\begin{multline*}
\frac{p}{u_3^2}(u_3^3 + 4) + 6p + 3u_3(u_3^2 + 2u_3 + 4)\bigg(\frac{2 u_3^2}{p} + 1\bigg) - (u_3 + 1)^3\left(\frac{\left(\frac{p}{u_3^2} + 1\right)^4}{\left(\frac{p}{u_3^2}\right)^3 + 1} - 1\right)\\
= \frac{3(p - 2u_3^2)}{p u_3^2 (p^2 - p u_3^2 + u_3^4)}(p^3(u_3^2 - u_3 + 1) - u_3^7(u_3^2 + 2u_3 + 4)) = 0.\tag*{\qedhere}
\end{multline*}
\end{proof}

For now, let $\{x\} \in [0,1)$ denote the fractional part of a real number $x$. Inspired by Son's argument \cite{Son}, \cite[pp.~180--194]{AndrewsBerndtII}, for a real number $\alpha \in [0,1)$, an enumerable set of real numbers $S$, and a power series $\sum_{v \in S} a_v q^v$, we define the operator $\mathcal{M_\alpha}$ as
\begin{equation}\label{def:M}
\mathcal{M_\alpha}\Bigg(\sum_{v \in S} a_v q^v\Bigg) := \sum_{\substack{w \in S\\ \{w\} = \alpha}} a_w q^w.
\end{equation}

Note that $\mathcal{M}_\alpha$ collects the terms of a power series whose exponents have fractional part $\alpha$. Also note that since power series representations are unique, by applying $\mathcal{M}_\alpha$ on an equation, we obtain an identity. A more general concept is introduced in \cite[Definition~2.8]{Son3}.

\begin{lemma}\label{lemma:u_1,u_2,u_4-split} For $|q| < 1$,
\leqnomode
\begin{align}
u_1 u_2^2 + u_2 u_4^2 + u_4 u_1^2 &= u_3\bigg(\frac{\varphi^2(q)}{\varphi^2(q^9)} + 3\bigg)\tag{i}\label{split-i},\\
p(u_1^2 u_2 + u_2^2 u_4 + u_4^2 u_1) &= 2 u_3^3 \bigg(\frac{\varphi^2(q)}{\varphi^2(q^9)} + 3\bigg)\tag{ii}\label{split-ii},\\
\intertext{and for $0 < |q| < 1$,}
u_1^3 + u_2^3 + u_4^3 &= \frac{p}{u_3^2}(u_3^3 + 4)\tag{iii}\label{split-iii}.
\end{align}
\reqnomode
\end{lemma}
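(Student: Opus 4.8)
The plan is to obtain the three symmetric combinations in \eqref{split-i}--\eqref{split-iii} simultaneously, as the three fractional-part components of the \emph{single} cube $(u_1 + u_2 + u_4)^3$. Expanding,
\[
(u_1 + u_2 + u_4)^3 = (u_1^3 + u_2^3 + u_4^3) + 3\!\sum_{i \neq j} u_i^2 u_j + 6 u_1 u_2 u_4, \qquad i,j \in \{1,2,4\},
\]
I would sort these monomials according to the fractional part of their exponents by applying the operator $\mathcal{M}_\alpha$ of \eqref{def:M}.

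To make the sorting precise, I first record the exponent structure of the $u_k$. From \eqref{def:u_k} and $f(q^{9+2k}, q^{9-2k}) = \sum_n q^{9n^2 + 2kn}$ one gets $u_k = \frac{2}{\varphi(q^9)} \sum_n q^{(9n+k)^2/9}$; since $\varphi(q^9)$ has only integral exponents, every term of $u_k$ has exponent with fractional part $\{k^2/9\}$, equal to $1/9, 4/9, 0, 7/9$ for $k = 1, 2, 3, 4$. Hence each monomial above lies in a single class, and the classes $\alpha = 0, \tfrac13, \tfrac23$ collect, respectively,
\[
3\bigl(u_1 u_2^2 + u_2 u_4^2 + u_4 u_1^2\bigr), \qquad \bigl(u_1^3 + u_2^3 + u_4^3\bigr) + 6p, \qquad 3\bigl(u_1^2 u_2 + u_2^2 u_4 + u_4^2 u_1\bigr),
\]
where $p = u_1 u_2 u_4$ by \eqref{9:p}. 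Thus $\mathcal{M}_0$, $\mathcal{M}_{1/3}$, $\mathcal{M}_{2/3}$ applied to $(u_1 + u_2 + u_4)^3$ yield exactly three times the left side of \eqref{split-i}, the left side of \eqref{split-iii} plus $6p$, and three times the bracketed factor in \eqref{split-ii}.

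Next I would evaluate these components from a closed form. By Lemma~\ref{lemma:u_1+u_2+u_4}\eqref{u_1+u_2+u_4-i},
\[
(u_1 + u_2 + u_4)^3 = \frac{\varphi^3(q)}{\varphi^3(q^9)\varphi^4(q)}\,\varphi^4(q^{1/3}) - \frac{\varphi^3(q)}{\varphi^3(q^9)}.
\]
The prefactor and the subtracted term have only integral exponents, and $\mathcal{M}_\alpha$ commutes with multiplication by such series (it merely regroups terms), so the task reduces to the $3$-dissection of $\varphi^4(q^{1/3})$. Splitting \eqref{def:phi} by residues $\bmod\ 3$ gives $\varphi(q^{1/3}) = \varphi(q^3) + 2 q^{1/3} f(q^5, q)$; writing $P = \varphi(q^3)$ and $Q = 2 q^{1/3} f(q^5, q)$ (so $Q/P = u_3(q^{1/3}) = p/u_3^2$ by Lemma~\ref{lemma:p}\eqref{p-ii}) and expanding $(P+Q)^4$ yields
\[
\mathcal{M}_0 \varphi^4(q^{1/3}) = P^4 + 4PQ^3, \quad \mathcal{M}_{1/3}\varphi^4(q^{1/3}) = 4P^3 Q + Q^4, \quad \mathcal{M}_{2/3}\varphi^4(q^{1/3}) = 6 P^2 Q^2.
\]

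Finally, I would substitute and simplify. Using $\varphi(q)/\varphi(q^9) = u_3 + 1$ and $\varphi^4(q^3)/\varphi^4(q^9) = u_3^3 + 1$ from Lemma~\ref{lemma:u_3}, the prefactor times $P^4$ collapses to $u_3^2 - u_3 + 1$, and with $t := p/u_3^2$ the three extracted identities become rational expressions in $u_3$ and $p$. Each reduces to the stated closed form via the single relation $p^3(u_3^2 - u_3 + 1) = u_3^7(u_3^2 + 2u_3 + 4)$ of Lemma~\ref{lemma:p}\eqref{p-iv}, together with $(u_3+1)^2 + 3 = \varphi^2(q)/\varphi^2(q^9) + 3 = u_3^2 + 2u_3 + 4$; parts \eqref{split-i} and \eqref{split-ii} then extend to $q=0$, where both sides vanish. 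The hard part will be the fractional-part bookkeeping and the dissection of $\varphi^4(q^{1/3})$, since the individual pieces genuinely cannot be recovered from the cube alone: adding the three components back together is precisely Lemma~\ref{lemma:u_3-and-p} combined with Lemma~\ref{lemma:u_1+u_2+u_4}\eqref{u_1+u_2+u_4-ii}, so it is the dissection that breaks the symmetry and makes the separation possible.
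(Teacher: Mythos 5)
Your proposal is correct, and I verified both the fractional-part bookkeeping and all three final reductions; it shares the paper's overall skeleton but executes the decisive step by a genuinely different mechanism. Both proofs expand $(u_1+u_2+u_4)^3$, sort the ten monomials into the classes $\alpha = 0, \tfrac13, \tfrac23$ exactly as you list them, and apply $\mathcal{M}_\alpha$ to an identity whose other side is a closed form in which those classes can be identified. The paper builds that closed form from Lemma~\ref{lemma:u_1+u_2+u_4}\eqref{u_1+u_2+u_4-ii} combined with Lemma~\ref{lemma:u_3-and-p}, whose entire role is to pre-split the expression into three terms built from $u_3$, $p$, and $1/p$, having fractional parts $0$, $\tfrac13$, $\tfrac23$ respectively (for $1/p$ one needs Lemma~\ref{lemma:p}\eqref{p-i} to see that its exponents all have fractional part $\tfrac23$); the operators then match terms immediately, with all the algebra quarantined inside Lemma~\ref{lemma:u_3-and-p}, a rational-function identity verified by direct computation from Lemma~\ref{lemma:p}\eqref{p-iv}. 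You instead start from Lemma~\ref{lemma:u_1+u_2+u_4}\eqref{u_1+u_2+u_4-i} and manufacture the splitting at the theta-function level: dissect $\varphi(q^{1/3}) = \varphi(q^3) + 2q^{1/3}f(q^5,q)$ (Lemma~\ref{lemma:phi-sum-general} with $n=3$), expand the fourth power binomially into the classes $P^4 + 4PQ^3$, $4P^3Q + Q^4$, $6P^2Q^2$, and pull the integer-exponent prefactor $\varphi^3(q)/(\varphi^3(q^9)\varphi^4(q))$ through $\mathcal{M}_\alpha$. With $t = Q/P = p/u_3^2$, your three class identities read $3(u_1u_2^2+u_2u_4^2+u_4u_1^2) = (u_3^2-u_3+1)(1+4t^3) - (u_3+1)^3$, $u_1^3+u_2^3+u_4^3+6p = (u_3^2-u_3+1)(4t+t^4)$, and $3(u_1^2u_2+u_2^2u_4+u_4^2u_1) = 6(u_3^2-u_3+1)t^2$, each of which collapses to the stated right-hand side using only \eqref{p-iv}; I checked all three. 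The trade-off: the paper's route is shorter at the point of use because the hard identity is outsourced to a separate lemma, while yours is more self-contained and better motivated --- it shows where the three pieces come from, needs no fractional-part analysis of reciprocals such as $1/p$ (all your prefactors have integer exponents), and, as you observe, it re-derives Lemma~\ref{lemma:u_3-and-p} as a byproduct rather than consuming it as an input.
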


\begin{proof} The case $q = 0$ is trivial. Otherwise, combining Lemma~\ref{lemma:u_3-and-p} and Lemma~\ref{lemma:u_1+u_2+u_4}\eqref{u_1+u_2+u_4-ii}, we have
\begin{equation}
\frac{p}{u_3^2}(u_3^3 + 4) + 6p + 3u_3(u_3^2 + 2u_3 + 4)\bigg(\frac{2 u_3^2}{p} + 1\bigg) = (u_1 + u_2 + u_4)^3.\label{eq:split-eq}
\end{equation}
Expanding the right-hand side of \eqref{eq:split-eq} and using Lemma~\ref{lemma:u_3}\eqref{u_3-i}, after simplification, we obtain
\begin{multline}
\frac{p}{u_3^2}(u_3^3 + 4) + 3u_3\bigg(\frac{\varphi^2(q)}{\varphi^2(q^9)} + 3\bigg)\bigg(\frac{2 u_3^2}{p} + 1\bigg)\\
= u_1^3 + u_2^3 + u_4^3 + 3(u_1 u_2^2 + u_2 u_4^2 + u_4 u_1^2 + u_1^2 u_2 + u_2^2 u_4 + u_4^2 u_1).\label{eq:split-eq-2}
\end{multline}

Now, using the series representations of $u_1, u_2, u_3,$ and $u_4$, and applying $\mathcal{M}_0$, defined in \eqref{def:M}, on both sides of \eqref{eq:split-eq-2}, i.e., collecting the terms with integer powers of $q$, we obtain part~\eqref{split-i}. Similarly, applying $\mathcal{M}_{2/3}$ and  $\mathcal{M}_{1/3}$ on \eqref{eq:split-eq-2}, we arrive at \eqref{split-ii} and \eqref{split-iii}, respectively. 
\end{proof}

Now, we summarize our proof of Theorem~\ref{thm:ninth} as the following.

\begin{proof}[Proof of Theorem~\ref{thm:ninth}] Part~\eqref{9:phi-1-9} is obtained by Lemma~\ref{lemma:phi-sum-general} with $n = 9$. Then, \eqref{9:p} and \eqref{9:u3} are given in Lemma~\ref{lemma:p}\eqref{p-i} and Lemma~\ref{lemma:u_3}\eqref{u_3-i}, respectively. Assuming~\eqref{def:alpha,beta,gamma}, the construction in \eqref{9:u124} holds, since the coefficients of the quadratic, linear, and constant terms of $r$ in \eqref{9:r} follow by Lemma~\ref{lemma:u_1,u_2,u_4-split}\eqref{split-i},\eqref{split-ii} and \eqref{def:alpha,beta,gamma}, respectively. These are
\begin{align*}
\alpha + \beta + \gamma &= u_1 u_2^2 + u_2 u_4^2 + u_4 u_1^2 = u_3\bigg(\frac{\varphi^2(q)}{\varphi^2(q^9)} + 3\bigg),\\
\alpha\beta + \beta\gamma + \gamma\alpha &= p(u_1^2 u_2 + u_2^2 u_4 + u_4^2 u_1) = 2 u_3^3 \bigg(\frac{\varphi^2(q)}{\varphi^2(q^9)} + 3\bigg),\\
\alpha\beta\gamma &= (u_1 u_2 u_4)^3 = p^3.
\end{align*}
Thus, $\alpha, \beta,$ and $\gamma$ are the roots of $\eqref{9:r}$.
\end{proof}

\begin{corollary}\label{cor:u_1,u_2,u_4-reciprocal} For $0 < |q| < 1$,
\begin{equation*}
\frac{1}{u_1^3} + \frac{1}{u_2^3} + \frac{1}{u_4^3} = \frac{1}{p u_3^3}(u_3^4 - u_3^3 + 6u_3^2 - 8u_3 + 8).
\end{equation*}
\end{corollary}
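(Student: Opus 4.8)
The plan is to reduce the whole statement to the elementary symmetric functions of $u_1,u_2,u_4$ (whose product is $p$), so that the assignment and ordering of $\alpha,\beta,\gamma$ never enters. Since $u_1u_2u_4=p$, clearing denominators gives
\begin{equation*}
\frac{1}{u_1^3}+\frac{1}{u_2^3}+\frac{1}{u_4^3}=\frac{(u_1u_2)^3+(u_2u_4)^3+(u_4u_1)^3}{(u_1u_2u_4)^3}=\frac{T}{p^3},
\end{equation*}
where $T:=(u_1u_2)^3+(u_2u_4)^3+(u_4u_1)^3$. Writing $a=u_1u_2$, $b=u_2u_4$, $c=u_4u_1$, one checks $a+b+c=e_2$, $ab+bc+ca=p\,e_1$, and $abc=p^2$, where $e_1:=u_1+u_2+u_4$ and $e_2:=u_1u_2+u_2u_4+u_4u_1$. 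The identity $a^3+b^3+c^3=(a+b+c)^3-3(a+b+c)(ab+bc+ca)+3abc$ then yields $T=e_2^3-3p\,e_1e_2+3p^2$, so everything comes down to expressing $e_1e_2$ and $e_2^3$ as functions of $u_3$ and $p$.

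Next I would assemble the two symmetric quantities that are genuinely accessible. Because $e_1e_2=(u_1u_2^2+u_2u_4^2+u_4u_1^2)+(u_1^2u_2+u_2^2u_4+u_4^2u_1)+3u_1u_2u_4$, Lemma~\ref{lemma:u_1,u_2,u_4-split}\eqref{split-i},\eqref{split-ii} give, with $K:=\varphi^2(q)/\varphi^2(q^9)+3=u_3^2+2u_3+4$,
\begin{equation*}
e_1e_2=u_3K+\frac{2u_3^3K}{p}+3p.
\end{equation*}
For $e_1^3$ I would combine $\sum u_i^3=e_1^3-3e_1e_2+3p$ with Lemma~\ref{lemma:u_1,u_2,u_4-split}\eqref{split-iii} to get $e_1^3=\tfrac{p(u_3^3+4)}{u_3^2}+3e_1e_2-3p$, again explicit in $u_3$ and $p$. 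The reason for keeping cubes throughout is that $e_2$ by itself is \emph{not} a single $\mathcal{M}_\alpha$-component: its three terms carry fractional exponents $5/9,\,2/9,\,8/9$, so no single application of $\mathcal{M}_\alpha$ from \eqref{def:M} isolates it. One therefore recovers $e_2^3=(e_1e_2)^3/e_1^3$ rationally rather than attempting to extract $e_2$ directly.

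Substituting $e_2^3=(e_1e_2)^3/e_1^3$ into $T=e_2^3-3p\,e_1e_2+3p^2$ and dividing by $p^3$ turns the corollary into a single rational identity in $u_3$ and $p$. For clean bookkeeping I would set $v:=p/u_3^2$, so that $p=u_3^2v$ and Lemma~\ref{lemma:p}\eqref{p-iv} becomes the cubic constraint $v^3(u_3^2-u_3+1)=u_3^3+2u_3^2+4u_3$; in these variables $e_1e_2$, $e_1^3$, and the target all become polynomials in $u_3$ and $v$, the target reading $T=u_3v^2(u_3^4-u_3^3+6u_3^2-8u_3+8)$. I expect the main obstacle to be exactly this final reduction: forming $(e_1e_2)^3$, dividing by $e_1^3$, and verifying that after repeated reduction by $v^3(u_3^2-u_3+1)=u_3^3+2u_3^2+4u_3$ (equivalently by Lemma~\ref{lemma:p}\eqref{p-iv}) every extraneous power of $p$ cancels, leaving precisely $\tfrac{1}{pu_3^3}(u_3^4-u_3^3+6u_3^2-8u_3+8)$. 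As a safeguard against sign or factor slips in that computation, I would run a leading-order check as $q\to0$, where $u_3\sim 2q$, $p\sim 8q^{7/3}$, $u_4\sim 2q^{16/9}$, and both sides behave like $\tfrac18 q^{-16/3}$, confirming the normalization of the claimed rational function.
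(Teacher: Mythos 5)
Your proposal is correct---every identity in it checks out: the reduction to $T=e_2^3-3p\,e_1e_2+3p^2$ with $a+b+c=e_2$, $ab+bc+ca=p\,e_1$, $abc=p^2$, the formulas for $e_1e_2$ and $e_1^3$ obtained from Lemma~\ref{lemma:u_1,u_2,u_4-split} and Newton's identity, the rescaled constraint $v^3(u_3^2-u_3+1)=u_3^3+2u_3^2+4u_3$, and the $q\to0$ normalization check---but it is a genuinely different, and noticeably heavier, route than the paper's. The paper never forms $e_2$ or $e_2^3$ at all: it multiplies the two cyclic halves supplied by Lemma~\ref{lemma:u_1,u_2,u_4-split}\eqref{split-i},\eqref{split-ii}, using
\begin{equation*}
(u_1u_2^2+u_2u_4^2+u_4u_1^2)(u_1^2u_2+u_2^2u_4+u_4^2u_1)=T+p(u_1^3+u_2^3+u_4^3)+3p^2,
\end{equation*}
so that every ingredient of $T$ is already given by the three parts of that lemma, and Lemma~\ref{lemma:p}\eqref{p-iv} enters exactly once, via the linear substitution $u_3^7/p^3=(u_3^2-u_3+1)/(u_3^2+2u_3+4)$; the whole proof is then a few lines of simplification. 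In your scheme the same information is routed through $e_2^3=(e_1e_2)^3/e_1^3$, which has two costs. First, dividing by $e_1^3$ needs $e_1\neq0$: this is clear for $0<q<1$ by Lemma~\ref{lemma:u_1,u_2,u_3,u_4,p-supp}, but the corollary is stated for complex $q$ with $0<|q|<1$, so you should add an analytic-continuation remark. Second, the crux of your argument---expanding $(p\,e_1e_2)^3=(pu_3K+2u_3^3K+3p^2)^3$ with your $K=u_3^2+2u_3+4$, dividing by $e_1^3$, and reducing modulo \eqref{p-iv}---is exactly the step you leave unexecuted. It does close: all your identities are valid, so the difference of the two sides is a rational function of $(u_3,p)$ that vanishes along the curve cut out by \eqref{p-iv}, and since that polynomial is irreducible (the exponent $7$ on $u_3$ prevents $u_3^7(u_3^2+2u_3+4)/(u_3^2-u_3+1)$ from being a cube), the reduction must terminate in the stated form; still, it is an order of magnitude more algebra than the paper's version. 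Your correct observation that $e_2$ is not a single $\mathcal{M}_\alpha$-component (its terms carry exponents congruent to $5/9$, $2/9$, $8/9$ modulo $1$) is precisely the reason the paper avoids $e_2$ and works with the product of the two cyclic sums instead.
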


\begin{proof} First, by multiplying the left-hand sides of Lemma~\ref{lemma:u_1,u_2,u_4-split}\eqref{split-i} and Lemma~\ref{lemma:u_1,u_2,u_4-split}\eqref{split-ii}, after rearrangement, we find that
\begin{equation*}
\frac{1}{u_1^3} + \frac{1}{u_2^3} + \frac{1}{u_4^3} =
\frac{1}{p^3}(u_1 u_2^2 + u_2 u_4^2 + u_4 u_1^2)(u_1^2 u_2 + u_2^2 u_4 + u_4^2 u_1) - \frac{1}{p^2}(u_1^3 + u_2^3 + u_4^3) - \frac{3}{p}.
\end{equation*}
Then, by Lemma~\ref{lemma:u_1,u_2,u_4-split}\eqref{split-i},\eqref{split-ii},\eqref{split-iii}, Lemma~\ref{lemma:u_3}\eqref{u_3-i}, and Lemma~\ref{lemma:p}\eqref{p-iv}, we deduce that
\begin{align*}
\frac{1}{u_1^3} + \frac{1}{u_2^3} + \frac{1}{u_4^3} 
&= \frac{2u_3^4}{p^4}(u_3^2 + 2u_3 + 4)^2 - \frac{1}{u_3^2 p}(u_3^3 + 4) - \frac{3}{p}\\
&= \frac{1}{p u_3^3} \bigg(\frac{2 u_3^7}{p^3}(u_3^2 + 2u_3 + 4)^2 - u_3(u_3^3 + 4) - 3u_3^3\bigg)\\
&= \frac{1}{p u_3^3} \bigg(2\frac{u_3^2 - u_3 + 1}{u_3^2 + 2u_3 + 4}(u_3^2 + 2u_3 + 4)^2 - u_3(u_3^3 + 4) - 3u_3^3\bigg)\\
&= \frac{1}{p u_3^3}(u_3^4 - u_3^3 + 6u_3^2 - 8u_3 + 8).\tag*{\qedhere}
\end{align*}
\end{proof}

\section{Ramanujan's entry on a cubic equation}\label{section:cubic}

On page 356 in Ramanujan's second notebook~\cite{RamanujanEarlierII}, he states the following algebraic identity.

\begin{entry}\label{entry:Ramanujan356} If $\alpha, \beta,$ and $\gamma$ are the roots of
\begin{equation*}
x^3 - ax^2 + bx - 1 = 0,
\end{equation*}
then
\begin{equation}\label{eq:entry30-roots}
(\alpha/\beta)^{1/3} + (\beta/\gamma)^{1/3} + (\gamma/\alpha)^{1/3} \text{\quad and \quad}
(\beta/\alpha)^{1/3} + (\gamma/\beta)^{1/3} + (\alpha/\gamma)^{1/3}
\end{equation}
are the roots of
\begin{equation*}
z^2 - tz + a + b + 3 = 0,
\end{equation*}
where
\begin{equation*}
t^3 - 3(a + b +3)t - (ab + 6(a + b) + 9) = 0.
\end{equation*}
The quantities \eqref{eq:entry30-roots} are also roots of
\begin{equation}\label{eq:entry30-6th}
y^6 - (ab + 6(a + b) + 9)y^3 + (a + b + 3)^3 = 0.
\end{equation}
Lastly,
\begin{multline*}
\alpha^{1/3} + \beta^{1/3} + \gamma^{1/3}\\
= \bigg(a + 6 + 3\bigg\{\frac{ab + 9}{2} + 3(a + b) + \bigg(\bigg(\frac{ab + 9}{2}\bigg)^2 - a^3 - b^3 - 27\bigg)^{1/2}\bigg\}^{1/3}\\
+ 3\bigg\{\frac{ab + 9}{2} + 3(a + b) - \bigg(\bigg(\frac{ab + 9}{2}\bigg)^2 - a^3 - b^3 - 27\bigg)^{1/2}\bigg\}^{1/3}\bigg)^{1/3}.
\end{multline*}
\end{entry}

Entry~\ref{entry:Ramanujan356} is a sequel to Ramanujan's earlier result \cite[pp.~22--23, Entry~10]{BerndtIV}, \cite[pp.~652--653]{BerndtBhargava}.

\begin{proof} See Berndt's book~\cite[pp.~40--41, Entry~30]{BerndtIV}.
\end{proof}

For our purposes, we restate some parts of Entry~\ref{entry:Ramanujan356} as the following.

\begin{lemma}\label{lemma:Ramanujan356restate} If $\alpha, \beta,$ and $\gamma$ are the roots of
\begin{equation*}
x^3 - a x^2 + b x - 1 = 0,
\end{equation*}
and if $\mu$ and $\nu$ are the roots of
\begin{equation*}
y^2 - (ab + 6(a + b) + 9)y + (a + b + 3)^3 = 0,
\end{equation*}
then
\begin{equation*}
\mu^{1/3} = (\alpha/\beta)^{1/3} + (\beta/\gamma)^{1/3} + (\gamma/\alpha)^{1/3} \quad \text{and} \quad
\nu^{1/3} = (\beta/\alpha)^{1/3} + (\gamma/\beta)^{1/3} + (\alpha/\gamma)^{1/3} .
\end{equation*}
\end{lemma}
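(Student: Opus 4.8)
The plan is to deduce the lemma directly from Entry~\ref{entry:Ramanujan356}, by recognizing that the two displayed sums of cube roots are the cube roots of the two roots of the quadratic, and that the quadratic is nothing but the sextic \eqref{eq:entry30-6th} under the substitution $y \mapsto y^3$. Write
\[
z_1 := (\alpha/\beta)^{1/3} + (\beta/\gamma)^{1/3} + (\gamma/\alpha)^{1/3}, \qquad z_2 := (\beta/\alpha)^{1/3} + (\gamma/\beta)^{1/3} + (\alpha/\gamma)^{1/3},
\]
which are exactly the quantities in \eqref{eq:entry30-roots}. By Vieta's formulas applied to $x^3 - ax^2 + bx - 1 = 0$, we have $\alpha + \beta + \gamma = a$, $\alpha\beta + \beta\gamma + \gamma\alpha = b$, and $\alpha\beta\gamma = 1$, so the hypotheses of Entry~\ref{entry:Ramanujan356} are satisfied with precisely these $a$ and $b$.

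First I would invoke the assertion of Entry~\ref{entry:Ramanujan356} that $z_1$ and $z_2$ are roots of the sextic \eqref{eq:entry30-6th},
\[
y^6 - (ab + 6(a + b) + 9)y^3 + (a + b + 3)^3 = 0.
\]
Next I would make the substitution $Y = y^3$: if $z$ is any root of \eqref{eq:entry30-6th}, then $Y = z^3$ satisfies
\[
Y^2 - (ab + 6(a + b) + 9)Y + (a + b + 3)^3 = 0,
\]
which is exactly the quadratic whose roots are named $\mu$ and $\nu$ in the lemma. Hence $z_1^3$ and $z_2^3$ are roots of this quadratic, so $\{\mu, \nu\} = \{z_1^3, z_2^3\}$. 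Taking cube roots and labelling $\mu = z_1^3$, $\nu = z_2^3$ then gives $\mu^{1/3} = z_1$ and $\nu^{1/3} = z_2$, which are the two asserted identities.

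The one point requiring care — and the main obstacle — is the branch of the cube root: since $Y \mapsto Y^{1/3}$ is multivalued, one must check that the cube roots of $\mu$ and $\nu$ intended here are $z_1$ and $z_2$ themselves rather than rotations by a cube root of unity, and that the pairing is consistent. This is settled by Entry~\ref{entry:Ramanujan356} itself, which already fixes $z_1$ and $z_2$ as the specific roots of the sextic assembled from the prescribed cube roots $(\alpha/\beta)^{1/3}$, etc.; the assignment $\mu = z_1^3$, $\nu = z_2^3$ is thereby determined rather than left free. In the applications to \eqref{9:u124}, the relevant $\alpha, \beta, \gamma$ are positive reals, so the real cube roots are the natural choice and the identification is unambiguous; I would record this compatibility to close the argument.
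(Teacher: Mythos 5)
Your proposal is correct and is essentially the paper's own proof: the paper likewise invokes \eqref{eq:entry30-roots} and \eqref{eq:entry30-6th} of Entry~\ref{entry:Ramanujan356} and replaces $y$ by $y^{1/3}$ in the sextic to obtain the quadratic, which is the same substitution you perform in the opposite direction. Your additional remark on fixing the branch of the cube root is a careful touch the paper leaves implicit, but it does not change the argument.
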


\begin{proof}
We utilize \eqref{eq:entry30-roots} and \eqref{eq:entry30-6th} of Entry~\ref{entry:Ramanujan356}. Replace $y$ by $y^{1/3}$ in \eqref{eq:entry30-6th} to obtain the result.
\end{proof}

Note the symmetry in Lemma~\ref{lemma:Ramanujan356restate}. Interchanging $a$ and $b$ produces the same case.

\begin{theorem}\label{thm:ninthrestate} Let $u_3$ and $p$ be as defined in Theorem~\ref{thm:ninth}. Furthermore, for $0 < |q| < 1$, let 
\begin{equation*}
a := \frac{u_3}{p}\bigg(\frac{\varphi^2(q)}{\varphi^2(q^9)} + 3\bigg) \quad \text{and} \quad 
b := \frac{2 u_3^3}{p^2}\bigg(\frac{\varphi^2(q)}{\varphi^2(q^9)} + 3\bigg) = \frac{2 u_3^2 a}{p}.
\end{equation*}
Then,
\begin{equation*}
\frac{\varphi(q^{1/9})}{\varphi(q^9)} = 1 + u_3 + (py)^{1/3},
\end{equation*}
where
\begin{equation*}
y^2 - (ab + 6(a + b) + 9)y + (a + b + 3)^3 = 0.
\end{equation*}
\end{theorem}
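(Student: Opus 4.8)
The plan is to reduce the claim to Theorem~\ref{thm:ninth} together with Lemma~\ref{lemma:Ramanujan356restate} by means of a single normalization of the cubic. Starting from Theorem~\ref{thm:ninth}\eqref{9:phi-1-9}, I would isolate the quantity to be identified: since $\frac{\varphi(q^{1/9})}{\varphi(q^9)} = 1 + u_1 + u_2 + u_3 + u_4$, it suffices to prove that $u_1 + u_2 + u_4 = (py)^{1/3}$ for the appropriate root $y$ of the displayed quadratic. Thus the whole matter comes down to evaluating the symmetric sum $u_1 + u_2 + u_4$.

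The bridge to Lemma~\ref{lemma:Ramanujan356restate} is to rescale the cubic \eqref{9:r} so that its constant term becomes $-1$. By Vieta's formulas applied to the expanded form $r(\xi) = \xi^3 - u_3(\frac{\varphi^2(q)}{\varphi^2(q^9)}+3)\xi^2 + 2u_3^3(\frac{\varphi^2(q)}{\varphi^2(q^9)}+3)\xi - p^3$, one has $\alpha\beta\gamma = p^3$, so I would substitute $\xi = p\eta$ and divide by $p^3$. The resulting monic cubic in $\eta$ has constant term $-1$, and its quadratic and linear coefficients are precisely $-a$ and $b$ for the $a, b$ defined in the theorem; that is, the rescaled roots $\alpha/p,\, \beta/p,\, \gamma/p$ satisfy $\eta^3 - a\eta^2 + b\eta - 1 = 0$, which is exactly the hypothesis of Lemma~\ref{lemma:Ramanujan356restate}.

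Now I would apply Lemma~\ref{lemma:Ramanujan356restate} to this normalized cubic. The key observation is that the ratios of its roots coincide with those of $\alpha, \beta, \gamma$, since the scaling factor $p$ cancels, e.g. $(\alpha/p)/(\beta/p) = \alpha/\beta$. Hence the second root $\nu$ of $y^2 - (ab + 6(a+b) + 9)y + (a+b+3)^3 = 0$ satisfies $\nu^{1/3} = (\beta/\alpha)^{1/3} + (\gamma/\beta)^{1/3} + (\alpha/\gamma)^{1/3}$. Comparing with Theorem~\ref{thm:ninth}\eqref{9:u124}, each of $u_1, u_2, u_4$ carries an explicit factor $p^{1/3}$, namely $u_1 = p^{1/3}(\beta/\alpha)^{1/3}$, $u_2 = p^{1/3}(\gamma/\beta)^{1/3}$, and $u_4 = p^{1/3}(\alpha/\gamma)^{1/3}$, so summing gives $u_1 + u_2 + u_4 = p^{1/3}\nu^{1/3} = (p\nu)^{1/3}$. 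Taking $y = \nu$ and restoring $1 + u_3$ yields the assertion.

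The main obstacle I anticipate is branch bookkeeping for the cube roots. Lemma~\ref{lemma:Ramanujan356restate} is an algebraic identity whose validity hinges on consistent choices of the three cube roots $(\beta/\alpha)^{1/3}$, $(\gamma/\beta)^{1/3}$, $(\alpha/\gamma)^{1/3}$, and the single cube root $(py)^{1/3}$ appearing in the conclusion must match the branch implicitly fixed by the power-series definitions \eqref{def:u_k} of $u_1, u_2, u_4$. I would address this by noting that $u_1, u_2, u_4$ are honest fractional power series in $q$ with real coefficients, so their values and that of $p^{1/3}$ are unambiguous for $0 < q < 1$; the factorization $u_j = p^{1/3}(\cdot)^{1/3}$ inherited from Theorem~\ref{thm:ninth}\eqref{9:u124} then forces the branches in Lemma~\ref{lemma:Ramanujan356restate} to be chosen so that the sum collapses to $(p\nu)^{1/3}$ on the correct sheet, with $\nu$ rather than $\mu$ being the relevant root of the quadratic.
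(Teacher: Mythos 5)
Your proposal is correct and follows essentially the same route as the paper: rescale the cubic \eqref{9:r} via $\xi = p\eta$ so that it becomes $\eta^3 - a\eta^2 + b\eta - 1 = 0$ with roots $\alpha/p, \beta/p, \gamma/p$, then apply Lemma~\ref{lemma:Ramanujan356restate} (noting the ratios are scale-invariant) together with Theorem~\ref{thm:ninth}\eqref{9:phi-1-9},\eqref{9:u124} to get $u_1+u_2+u_4 = (p\nu)^{1/3}$. Your extra care in identifying $\nu$ (rather than $\mu$) as the relevant root and in tracking the cube-root branches is a sound elaboration of details the paper leaves implicit in its ``follows directly'' step.
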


Theorem~\ref{thm:ninthrestate} is a corollary of Theorem~\ref{thm:ninth} and Lemma~\ref{lemma:Ramanujan356restate}.

\begin{proof} From Theorem~\ref{thm:ninth}\eqref{9:r}, consider the polynomial
\begin{equation}\label{eq:restate-r}
r(\xi) = \xi^3 - a p \xi^2 + b p^2 \xi - p^3 = 0.
\end{equation}
By using an appropriate polynomial transformation, we find that $\alpha, \beta,$ and $\gamma$ are the roots of \eqref{eq:restate-r} exactly if $\alpha/p,\, \beta/p,$ and $\gamma/p$ are the roots of
\begin{equation}\label{eq:tranform-r}
\frac{r(x p)}{p^3} = x^3 - a x^2 + b x - 1 = 0.
\end{equation}
The rest of the proof follows directly by Theorem~\ref{thm:ninth}\eqref{9:phi-1-9},\eqref{9:u124} and Lemma~\ref{lemma:Ramanujan356restate}.
\end{proof}

\section{Examples for ninth degree identities}\label{section:examples}

In this section, we present five ninth degree examples. To obtain examples for Theorem~\ref{thm:ninth}, we need the values of triplets of class invariants $G_n, G_{9n},$ and $G_{81n}$, for certain positive rational numbers $n$. Ramanujan~\cite[pp.~189--199]{BerndtV} calculated the class invariant $G_n$ for a total of $78$ values of $n$. In our first four examples, we use the values for $n = 1, 3, 9, 27,$ and $81$. In our fifth example, we use $G_{243}$, given by Watson~\cite{Watson3}. We give our values of Theorem~\ref{thm:ninth}\eqref{9:phi-1-9} in trigonometric form. As we mentioned in the introduction, alternative expressions for Theorems~\ref{thm:e27}--\ref{thm:e81} are given in terms of radicals in Theorem~\ref{thm:alternative}\eqref{eq:e27-alt}--\eqref{eq:e81-alt}. Theorem~\ref{thm:e81sqrt3} appears to be new. Our examples are summarized in Table~\ref{table:ninth}.

\begin{table}[ht]
\centering
\begin{tabular}{| r | r | r | l | l | l |}
	\hline &&&&&\\[-1em]
	$n$ & $9n$ & $81n$ & Value & Ex. for Thm.~\ref{thm:ninth} & Other representation\\
	\hline &&&&&\\[-1em]
	$1/9$ & $1$ & $9$ & $\varphi(e^{-27\pi})/\varphi(e^{-3\pi})$ & Theorem~\ref{thm:e27} & Theorem~\ref{thm:alternative}\eqref{eq:e27-alt} \\
	$1/3$ & $3$ & $27$ & $\varphi(e^{-9\pi\sqrt{3}})/\varphi(e^{-3\pi\sqrt{3}})$ & Theorem~\ref{thm:e9sqrt3} & Theorem~\ref{thm:alternative}\eqref{eq:e9sqrt3-alt} \\
	$1/27$ & $1/3$ & $3$ & $\varphi(e^{-27\pi\sqrt{3}})/\varphi(e^{-\pi\sqrt{3}})$ & Theorem~\ref{thm:e27sqrt3} & Theorem~\ref{thm:alternative}\eqref{eq:e27sqrt3-alt} \\
	$1/81$ & $1/9$ & $1$ & $\varphi(e^{-81\pi})/\varphi(e^{-\pi})$ & Theorem~\ref{thm:e81} & Theorem~\ref{thm:alternative}\eqref{eq:e81-alt} \\
	\hline &&&&&\\[-1em]
	$1/243$ & $1/27$ & $1/3$ & $\varphi(e^{-81\pi\sqrt{3}})/\varphi(e^{-\pi\sqrt{3}})$ & Theorem~\ref{thm:e81sqrt3} & \multicolumn{1}{|c|}{---} \\
	\hline
\end{tabular}
\vspace*{1em}
\caption{Overview of ninth degree examples}\label{table:ninth}
\end{table}

\vspace{-1em}

First, we shall need some lemmas. Lemmas~\ref{lemma:transform}--\ref{lemma:G81n} are classical. Lemmas~\ref{lemma:p-u_3-G},~\ref{lemma:u_1,u_2,u_3,u_4,p-supp}, and \ref{lemma:9:a,b,c-order} are analogous to the septic case lemmas in \cite[Lemmas~2.3,~3.3, and 3.7]{Rebak}, respectively.

\begin{lemma}\label{lemma:transform} If $n$ is a positive rational number, then
\begin{equation*}
\varphi(e^{-\pi/\sqrt{n}}) = n^{1/4} \varphi(e^{-\pi\sqrt{n}}).
\end{equation*}
\end{lemma}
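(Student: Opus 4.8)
The statement to prove is the classical transformation formula
\[
\varphi(e^{-\pi/\sqrt{n}}) = n^{1/4}\,\varphi(e^{-\pi\sqrt{n}}),
\]
which is a special case of the functional equation for the Jacobi theta function under the modular inversion $\tau \mapsto -1/\tau$.

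\textbf{Approach.} The plan is to deduce this from the theta transformation formula, which I would obtain via Poisson summation. Recall that
\[
\varphi(q) = \sum_{m=-\infty}^{\infty} q^{m^2},
\]
so writing $q = e^{-\pi t}$ for $t > 0$ gives $\varphi(e^{-\pi t}) = \sum_{m} e^{-\pi t m^2} =: \theta(t)$. The central analytic fact is the self-reciprocity
\[
\theta(t) = \frac{1}{\sqrt{t}}\,\theta\!\left(\frac{1}{t}\right),
\]
valid for all $t > 0$. Once this is in hand, the lemma follows immediately by substituting $t = 1/\sqrt{n}$: then $1/t = \sqrt{n}$, and the formula reads
\[
\varphi(e^{-\pi/\sqrt{n}}) = \theta\!\left(\tfrac{1}{\sqrt{n}}\right) = \frac{1}{\sqrt{1/\sqrt{n}}}\,\theta(\sqrt{n}) = n^{1/4}\,\varphi(e^{-\pi\sqrt{n}}),
\]
which is exactly the claim.

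\textbf{Proving the self-reciprocity.} First I would apply the Poisson summation formula to the Gaussian $g(x) = e^{-\pi t x^2}$. Its Fourier transform is $\hat{g}(\xi) = t^{-1/2} e^{-\pi \xi^2/t}$, a standard computation completing the square in the Gaussian integral $\int_{-\infty}^\infty e^{-\pi t x^2} e^{-2\pi i x\xi}\,dx$. Poisson summation, $\sum_{m} g(m) = \sum_{k} \hat{g}(k)$, then yields directly
\[
\sum_{m=-\infty}^{\infty} e^{-\pi t m^2} = \frac{1}{\sqrt{t}} \sum_{k=-\infty}^{\infty} e^{-\pi k^2/t},
\]
which is the self-reciprocity written out. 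Since everything converges absolutely and rapidly for $t > 0$, there are no convergence subtleties, and the substitution $t = 1/\sqrt{n}$ completes the argument.

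\textbf{Main obstacle.} There is no serious obstacle here; the result is entirely classical and the paper even notes that Lemmas of this type are ``classical.'' The only points requiring care are the Gaussian Fourier transform computation and the hypotheses of Poisson summation (rapid decay and smoothness of $g$, both obvious for a Gaussian). In practice, rather than redo Poisson summation, I would most likely simply cite a standard reference for the theta transformation formula (for instance Whittaker and Watson, already cited in the paper, or Berndt's books) and specialize $t = 1/\sqrt{n}$, since the authors are clearly treating these transformation lemmas as known background rather than as original contributions.
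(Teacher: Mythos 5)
Your proposal is correct and matches the paper's proof in essence: the paper likewise quotes the classical transformation formula $\sqrt{\alpha}\,\varphi(e^{-\alpha^2}) = \sqrt{\beta}\,\varphi(e^{-\beta^2})$ (for $\alpha\beta = \pi$) from Ramanujan's notebooks and specializes it with $\alpha^2 = \pi/\sqrt{n}$, which is exactly your substitution $t = 1/\sqrt{n}$ in the self-reciprocity $\theta(t) = t^{-1/2}\theta(1/t)$. Your Poisson-summation derivation of that formula is a correct but optional addition, and your closing remark --- that one would simply cite the known transformation formula and specialize --- is precisely what the paper does.
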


\begin{proof} The transformation formula for $\varphi(q)$ states that if $\operatorname{Re}(\alpha^2), \operatorname{Re}(\beta^2) > 0$ and $\alpha\beta = \pi$, then \cite[p.~199]{RamanujanEarlierII}, \cite[p.~43]{BerndtIII}
\begin{equation*}
\sqrt{\alpha} \varphi\big(e^{-\alpha^2}\big) = \sqrt{\beta} \varphi\big(e^{-\beta^2}\big).
\end{equation*}
The lemma is the special case for $\alpha^2 = \pi/\sqrt{n}$.
\end{proof}

\begin{lemma}\label{lemma:G} If $n$ is a positive rational number, then $G_n = G_{1/n}$.
\end{lemma}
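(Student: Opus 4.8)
The plan is to reduce the claim to the modular invariance of a Dedekind eta quotient under the substitution $\tau\mapsto-1/(4\tau)$. First I would record the standard product rearrangement
\[
\chi(q)=(-q;q^2)_\infty=\frac{(q^2;q^2)_\infty^2}{(q;q)_\infty\,(q^4;q^4)_\infty},
\]
which follows from $(-q;q)_\infty=(q^2;q^2)_\infty/(q;q)_\infty$ together with $\chi(q)=(-q;q)_\infty/(-q^2;q^2)_\infty$. Writing $q=e^{2\pi i\tau}$ and setting $\eta(\tau)=e^{\pi i\tau/12}\prod_{k\ge1}(1-e^{2\pi ik\tau})=q^{1/24}(q;q)_\infty$, a direct bookkeeping of the fractional powers of $q$ yields the clean identity
\[
q^{-1/24}\chi(q)=\frac{\eta(2\tau)^2}{\eta(\tau)\,\eta(4\tau)},
\qquad\text{hence}\qquad
G_n=2^{-1/4}\,\frac{\eta(2\tau)^2}{\eta(\tau)\,\eta(4\tau)}.
\]

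Next I would fix $\tau=i\sqrt{n}/2$, so that $q=e^{-\pi\sqrt n}$ as demanded by \eqref{def:G}, and note that the value $G_{1/n}$ corresponds to $\tau'=i/(2\sqrt n)$. The arithmetic heart of the matter is the observation $\tau'=-1/(4\tau)$, which gives $2\tau'=-1/(2\tau)$ and $4\tau'=-1/\tau$. Applying the modular transformation $\eta(-1/w)=\sqrt{-iw}\,\eta(w)$ with $w=2\tau,\,4\tau,\,\tau$ respectively, I would obtain $\eta(2\tau')=\sqrt{-2i\tau}\,\eta(2\tau)$, $\eta(\tau')=2\sqrt{-i\tau}\,\eta(4\tau)$, and $\eta(4\tau')=\sqrt{-i\tau}\,\eta(\tau)$. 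Substituting into the eta quotient, the product of automorphy factors collapses to $(-2i\tau)/\bigl(2\sqrt{-i\tau}\cdot\sqrt{-i\tau}\bigr)=(-2i\tau)/(-2i\tau)=1$, whence $\eta(2\tau')^2/\bigl(\eta(\tau')\eta(4\tau')\bigr)=\eta(2\tau)^2/\bigl(\eta(\tau)\eta(4\tau)\bigr)$ and therefore $G_{1/n}=G_n$.

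The first display is routine; the only genuine point of care is the cancellation in the second paragraph, where the factor $2$ produced by $\eta(\tau')=\eta(-1/(4\tau))$ must exactly offset the factor $2$ coming from the squared numerator $\eta(2\tau')^2$, and the branch of $\sqrt{-iw}$ must be taken consistently. The latter causes no trouble, since $\tau$ lies on the positive imaginary axis, making $-i\tau=\sqrt n/2$ a positive real, so every radical is real and positive. Finally, I would point out the shorter route afforded by the singular-modulus theory of \cite{BerndtV}: one has $G_n=(2k_nk_n')^{-1/12}$ for the singular modulus $k_n$, and the reciprocal relation $k_{1/n}=k_n'$ (equivalently, the inversion of $K'/K$ under $n\mapsto1/n$) gives $k_{1/n}k_{1/n}'=k_n'k_n$ immediately, so that $G_{1/n}=G_n$ with almost no computation.
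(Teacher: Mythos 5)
Your proof is correct, but it takes a genuinely different route from the paper: the paper gives no argument at all for this lemma, simply citing Ramanujan \cite{RamanujanModularPi}, \cite[pp.~23--39]{RamanujanCollected} and Yi's thesis \cite[pp.~18--19]{YiThesis} for this classical fact. Your self-contained argument checks out at every step: the rearrangement $\chi(q)=(q^2;q^2)_\infty^2/\bigl((q;q)_\infty(q^4;q^4)_\infty\bigr)$ is standard, the exponent bookkeeping giving $q^{-1/24}\chi(q)=\eta(2\tau)^2/\bigl(\eta(\tau)\eta(4\tau)\bigr)$ is right, so that \eqref{def:G} becomes $G_n=2^{-1/4}\eta(2\tau)^2/\bigl(\eta(\tau)\eta(4\tau)\bigr)$ at $\tau=i\sqrt{n}/2$; the identification of $n\mapsto 1/n$ with the inversion $\tau\mapsto-1/(4\tau)$ is correct; and the three applications of $\eta(-1/w)=\sqrt{-iw}\,\eta(w)$ are made with the right arguments. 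A point worth making explicit for the reader: under the inversion, $\eta(\tau')$ transforms into a multiple of $\eta(4\tau)$ and $\eta(4\tau')$ into a multiple of $\eta(\tau)$, i.e.\ the two denominator factors swap roles, which is harmless precisely because they appear as a product; and since $\tau$ lies on the positive imaginary axis, $-i\tau=\sqrt{n}/2>0$, so all square roots are positive real and the branch of the automorphy factor causes no difficulty, exactly as you note. What your approach buys is a complete proof from the modular transformation of $\eta$ alone, independent of external sources; what the paper's citation buys is brevity and a pointer to the classical framework. Your closing alternative, $G_n=(2k_nk_n')^{-1/12}$ together with $k_{1/n}=k_n'$ (inversion of $K'/K$ under $n\mapsto 1/n$), is essentially that classical argument and is the one closest in spirit to the sources the paper cites \cite[p.~183]{BerndtV}; either version would serve as a legitimate replacement for the paper's reference-only proof.
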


\begin{proof}
See Ramanujan's paper \cite{RamanujanModularPi}, \cite[pp.~23--39]{RamanujanCollected} or Yi's thesis \cite[pp.~18--19]{YiThesis}.
\end{proof}

\begin{lemma}\label{lemma:9n} If $n$ is a positive rational number, then
\begin{equation*}
\frac{\varphi(e^{-9\pi\sqrt{n}})}{\varphi(e^{-\pi\sqrt{n}})} = \frac{1}{3}\bigg(1 + \frac{\sqrt{2} G_{9n}}{G_n^3}\bigg).
\end{equation*}
\end{lemma}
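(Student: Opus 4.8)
The plan is to express the ratio $\varphi(e^{-9\pi\sqrt{n}})/\varphi(e^{-\pi\sqrt{n}})$ through a known duplication-type formula for $\varphi$, and then translate the resulting theta quotient into class invariants $G_n$ and $G_{9n}$ using their definition~\eqref{def:G} in terms of $\chi$. First I would invoke the established identity (from the theory of modular equations of degree $9$, or equivalently from Lemma~\ref{lemma:u_3}\eqref{u_3-iii} with $q\mapsto e^{-\pi\sqrt{n}}$) that relates $\varphi(q)/\varphi(q^9)$ to the $\chi$-quotient $u_3 = 2q\chi(q^3)/\chi^3(q^9)$. Indeed, Lemma~\ref{lemma:u_3}\eqref{u_3-i},\eqref{u_3-iii} together give
\begin{equation*}
\frac{\varphi(q)}{\varphi(q^9)} = 1 + u_3(q) = 1 + \frac{2q\chi(q^3)}{\chi^3(q^9)}.
\end{equation*}
This is the correct shape, but it relates arguments $q$ and $q^9$; the lemma wants $q^9$ and $q$ in the opposite roles relative to $n$, so I would set $q = e^{-\pi\sqrt{n}}$ and read the identity for the triple $(q, q^3, q^9)$.

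Next I would substitute $q = e^{-\pi\sqrt{n}}$, so that $q^3 = e^{-\pi\sqrt{9n}}$ and the $\chi$-values become $\chi(e^{-\pi\sqrt{9n}})$ and $\chi(e^{-\pi\sqrt{81n}})$. From the definition~\eqref{def:G}, $\chi(e^{-\pi\sqrt{m}}) = 2^{1/4} e^{-\pi\sqrt{m}/24} G_m$, so each $\chi$-factor contributes both a power of $2$ and an exponential prefactor $q^{\text{(power)}/24}$. The key bookkeeping step is to check that the explicit power of $q$ sitting in front — namely the factor $2q$ in $u_3$ combined with $q^3$ and $q^9$ inside the arguments — exactly cancels the $e^{-\pi\sqrt{m}/24}$ prefactors coming from the three class invariants, leaving a clean algebraic expression. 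Concretely, the exponent accounting is $\sqrt{n} - \frac{1}{24}(9\sqrt{n}) \cdot 3 \cdot (\text{from }\chi^3) + \ldots$, which I expect to collapse to zero; writing $u_3 = 2^{1-3/4}\,q\,e^{(\cdots)}\,G_{9n}/G_{81n}^3$ and verifying the exponent vanishes is the crux. After the cancellation one is left with $u_3 = \sqrt{2}\,G_{9n}/G_{81n}^3$ times a power of two, giving $\varphi(q)/\varphi(q^9) = 1 + \sqrt{2}\,G_{9n}/G_{81n}^3$ — which is not yet the stated form.

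To reconcile with the statement, I would apply Lemma~\ref{lemma:G} ($G_m = G_{1/m}$) and Lemma~\ref{lemma:transform} to rewrite the ratio in the normalization the lemma demands, dividing through by $3$ via the transformation between arguments $9\pi\sqrt{n}$ and the reciprocal modulus. The factor of $1/3$ is the signature of the degree-$9$ modular relation and should emerge once I correctly identify which of the three theta values plays the role of the ``large'' argument; Lemma~\ref{lemma:transform} with the appropriate value of $n$ supplies exactly the $3 = 9^{1/4}/3^{1/4}\cdot\ldots$ rescaling. The main obstacle I anticipate is not any deep idea but the exponent bookkeeping in the $\chi$-to-$G$ conversion: one must track the fractional powers of $q$ and the powers of $2^{1/4}$ through a cube in the denominator, and a single slip in the $\frac{1}{24}$-exponents would destroy the cancellation. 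I would therefore carry out that accounting carefully and as the final step verify numerically (e.g.\ at $n=1$, using the known value $G_1 = 1$) that the constant $\sqrt{2}$, the factor $1/3$, and the powers of $G_{9n}/G_n^3$ all land as stated.
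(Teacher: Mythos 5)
Your proof is correct, but it takes a genuinely different route from the paper: the paper does not derive this lemma at all, its ``proof'' is a pure literature citation (\cite[p.~145]{BorweinBrothersPiAGM}, \cite[p.~334, (5.7)]{BerndtV}, \cite[(3.30)]{BerndtChan}, \cite[Theorem~3.3]{BerndtRebak2}). Your argument, by contrast, is self-contained within the paper's own toolkit, and it does go through: combining Lemma~\ref{lemma:u_3}\eqref{u_3-i},\eqref{u_3-iii} with the definition \eqref{def:G} at $q = e^{-\pi\sqrt{n}}$, so that $q^3 = e^{-\pi\sqrt{9n}}$ and $q^9 = e^{-\pi\sqrt{81n}}$, gives
\begin{equation*}
\frac{\varphi(e^{-\pi\sqrt{n}})}{\varphi(e^{-9\pi\sqrt{n}})} = 1 + \frac{2q\,\chi(q^3)}{\chi^3(q^9)} = 1 + \frac{2q\cdot 2^{1/4}q^{1/8}\,G_{9n}}{2^{3/4}q^{9/8}\,G_{81n}^3} = 1 + \frac{\sqrt{2}\,G_{9n}}{G_{81n}^3},
\end{equation*}
which is precisely Lemma~\ref{lemma:p-u_3-G}\eqref{u_3-G} (proved later in the paper by the same computation, independently of Lemma~\ref{lemma:9n}, so no circularity arises); then replacing $n$ by $1/(81n)$, using $G_{1/(9n)} = G_{9n}$ and $G_{1/n} = G_n$ from Lemma~\ref{lemma:G}, and applying Lemma~\ref{lemma:transform} twice, namely $\varphi(e^{-\pi/\sqrt{81n}}) = (81n)^{1/4}\varphi(e^{-9\pi\sqrt{n}})$ and $\varphi(e^{-\pi/\sqrt{n}}) = n^{1/4}\varphi(e^{-\pi\sqrt{n}})$, produces exactly the factor $81^{1/4} = 3$ and the stated identity. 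Two details in your write-up should be repaired when executing this plan: the power-of-two bookkeeping comes out to $2\cdot 2^{1/4}/2^{3/4} = \sqrt{2}$ exactly (not ``$\sqrt{2}$ times a power of two''), and the rescaling factor is $81^{1/4} = 3$, not ``$9^{1/4}/3^{1/4}\cdots$''; also the clean way to phrase the final step is the substitution $n \mapsto 1/(81n)$ rather than a vague appeal to ``the reciprocal modulus.'' What your route buys is a derivation of Lemma~\ref{lemma:9n} from results the paper needs anyway, exposing it as the $n \mapsto 1/(81n)$ inversion of Lemma~\ref{lemma:p-u_3-G}\eqref{u_3-G}; what the paper's citation buys is brevity and reliance on standard, already-verified references.
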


\begin{proof}
See \cite[p.~145]{BorweinBrothersPiAGM}, \cite[p.~334, (5.7)]{BerndtV}, \cite[(3.30)]{BerndtChan}, or \cite[Theorem~3.3]{BerndtRebak2}.
\end{proof}

Berndt~\cite[pp.~205--207, Theorem~3.1]{BerndtV} expresses $G_{9n}$ in terms of $G_n$. The Borwein brothers \cite[p.~145, (4.7.12)]{BorweinBrothersPiAGM} connect $G_{81n}$ with $G_{9n}$ and $G_n$, which we provide in our next lemma.

\begin{lemma}\label{lemma:G81n} If $n$ is a positive rational number, then
\begin{equation*}
G_{81n}^3 = G_{9n} \frac{\sqrt{2} G_{9n} + G_n^3}{\sqrt{2} G_n^3 - G_{9n}}.
\end{equation*}
\end{lemma}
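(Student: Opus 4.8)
The plan is to reduce the asserted three–invariant identity to the two–invariant relation already recorded in Lemma~\ref{lemma:9n}. Throughout I fix $q = e^{-\pi\sqrt{n}}$, so that $q^3 = e^{-\pi\sqrt{9n}}$ and $q^9 = e^{-\pi\sqrt{81n}}$. By the definition \eqref{def:G} of the class invariant together with \eqref{def:chi}, this gives
\[
G_n = 2^{-1/4}q^{-1/24}\chi(q), \qquad G_{9n} = 2^{-1/4}q^{-1/8}\chi(q^3), \qquad G_{81n} = 2^{-1/4}q^{-3/8}\chi(q^9),
\]
whence $G_n^3 = 2^{-3/4}q^{-1/8}\chi^3(q)$ and $G_{81n}^3 = 2^{-3/4}q^{-9/8}\chi^3(q^9)$. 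First I would substitute these into the claimed identity, factor the common powers of $2$ and $q$ out of the numerator and denominator on the right, and cancel. A short computation shows the claim is equivalent to the $\chi$–identity
\[
\chi^3(q^9) = q\,\chi(q^3)\,\frac{\chi^3(q) + 2\chi(q^3)}{\chi^3(q) - \chi(q^3)}.
\]

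Next I would eliminate $\chi(q^9)$ by means of Lemma~\ref{lemma:u_3}\eqref{u_3-iii}, namely $\chi^3(q^9) = 2q\chi(q^3)/u_3$. After cancelling the common factor $q\chi(q^3)$, the displayed $\chi$–identity collapses to the two–level relation
\[
\frac{2}{u_3} = \frac{\chi^3(q) + 2\chi(q^3)}{\chi^3(q) - \chi(q^3)}, \qquad\text{equivalently}\qquad \frac{\chi^3(q)}{\chi(q^3)} = \frac{2(1 + u_3)}{2 - u_3}.
\]

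Finally I would identify this last relation with Lemma~\ref{lemma:9n}. Using Lemma~\ref{lemma:u_3}\eqref{u_3-i} we have $\varphi(q^9)/\varphi(q) = 1/(1 + u_3)$, while from the $\chi$–forms above $\sqrt{2}\,G_{9n}/G_n^3 = 2\chi(q^3)/\chi^3(q)$. Substituting both into Lemma~\ref{lemma:9n} turns it into $\tfrac{1}{1+u_3} = \tfrac13\bigl(1 + 2\chi(q^3)/\chi^3(q)\bigr)$, and clearing denominators gives exactly $\chi^3(q)(2 - u_3) = 2\chi(q^3)(1 + u_3)$, i.e. the second relation above. Reversing the chain of equivalences then yields the lemma.

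The computations are elementary; the thing to get right is the bookkeeping of the powers of $2$ and $q$ when passing between the $G_{\bullet}$ and the $\chi$–quotients, together with a check that the denominators $\chi^3(q) - \chi(q^3)$ and $\sqrt{2}\,G_n^3 - G_{9n}$ are nonzero for $0 < q < 1$ (indeed $\chi^3(q) > \chi(q) > \chi(q^3) > 0$), so that the divisions are legitimate. The one conceptual step — which I would flag as the crux — is recognizing that the three–invariant identity is not independent new information but reduces, via the product form of $u_3$ in Lemma~\ref{lemma:u_3}\eqref{u_3-iii}, to the relation of Lemma~\ref{lemma:9n}; once this is seen, no further modular equation is needed. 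Alternatively one may simply cite the Borwein brothers' identity directly, but the above gives a self-contained derivation within the paper's framework.
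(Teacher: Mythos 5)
Your proposal is correct, but it takes a genuinely different route from the paper: the paper disposes of Lemma~\ref{lemma:G81n} with a one-line citation to the Borweins' book \cite[p.~145, (4.7.12)]{BorweinBrothersPiAGM}, whereas you derive the identity from material already inside the paper. Your bookkeeping checks out: with $q=e^{-\pi\sqrt{n}}$ one has $G_n = 2^{-1/4}q^{-1/24}\chi(q)$, $G_{9n}=2^{-1/4}q^{-1/8}\chi(q^3)$, $G_{81n}=2^{-1/4}q^{-3/8}\chi(q^9)$, the powers of $2$ and $q$ cancel exactly as you claim, and the lemma is equivalent to $\chi^3(q^9)=q\,\chi(q^3)\bigl(\chi^3(q)+2\chi(q^3)\bigr)/\bigl(\chi^3(q)-\chi(q^3)\bigr)$; eliminating $\chi^3(q^9)$ via Lemma~\ref{lemma:u_3}\eqref{u_3-iii}, and then rewriting Lemma~\ref{lemma:9n} through Lemma~\ref{lemma:u_3}\eqref{u_3-i} and the observation $\sqrt{2}\,G_{9n}/G_n^3 = 2\chi(q^3)/\chi^3(q)$, closes the chain, with every step reversible because $\chi^3(q)>\chi(q^3)>0$ for $0<q<1$. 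What your argument buys is a structural fact the paper's citation conceals: the three-invariant relation is not independent modular input but a formal consequence of Lemma~\ref{lemma:9n} together with the two representations of $u_3$, so only one external identity is really needed at this point of the paper. What it costs is that it is still not a proof from first principles---Lemma~\ref{lemma:9n} is itself only cited---so you have traded one citation for another, albeit arguably a more elementary and better-known one. Two small points to make explicit if you write this up: the divisions by $u_3$ and by $2-u_3$ need justification alongside your nonvanishing checks (both are immediate, since $u_3=2q\chi(q^3)/\chi^3(q^9)>0$ and $0<u_3<2$ by Lemma~\ref{lemma:u_1,u_2,u_3,u_4,p-supp}, or because $2-u_3=0$ would force a contradiction in your final equation); and the argument as given covers $0<q<1$, i.e.\ positive rational $n$, which is precisely the generality the lemma asserts.
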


\begin{proof}
See the Borweins' book \cite[p.~145, (4.7.12)]{BorweinBrothersPiAGM}.
\end{proof}

To find their values, in our next lemma, we express $p$ and $u_3$ in terms of $G_n$.

\begin{lemma}\label{lemma:p-u_3-G} If $q = \exp(-\pi\sqrt{n})$, for a positive rational number $n$, then
\leqnomode
\begin{align}
p &= \frac{2\sqrt{2} G_n}{G_{9n} G_{81n}^6}\tag{i}\label{p-G}\\
\intertext{and}
u_3 &= \frac{\sqrt{2} G_{9n}}{G_{81n}^3}\tag{ii}\label{u_3-G}.
\end{align}
\reqnomode
\end{lemma}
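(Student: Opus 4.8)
The plan is to substitute the product representations of $u_3$ and $p$ supplied by Lemma~\ref{lemma:u_3}\eqref{u_3-iii} and Lemma~\ref{lemma:p}\eqref{p-i}, and then rewrite every occurrence of $\chi$ through the definition \eqref{def:G} of the class invariant. The essential observation is that the three quantities $\chi(q)$, $\chi(q^3)$, and $\chi(q^9)$ are evaluated at precisely the arguments attached to $G_n$, $G_{9n}$, and $G_{81n}$: since $q = e^{-\pi\sqrt{n}}$, we have $q^3 = e^{-\pi\sqrt{9n}}$ and $q^9 = e^{-\pi\sqrt{81n}}$. Applying \eqref{def:G} with $n$ replaced by $9n$ and by $81n$ therefore gives
\begin{equation*}
\chi(q) = 2^{1/4} q^{1/24} G_n, \qquad \chi(q^3) = 2^{1/4} q^{1/8} G_{9n}, \qquad \chi(q^9) = 2^{1/4} q^{3/8} G_{81n}.
\end{equation*}

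With these three identities the lemma reduces to bookkeeping. For \eqref{u_3-G} I would insert them into $u_3 = 2q\chi(q^3)/\chi^3(q^9)$: the numerator carries $q^{1 + 1/8} = q^{9/8}$, matching the $q^{9/8}$ produced by $\chi^3(q^9)$ in the denominator, so all $q$-powers cancel, while the powers of $2$ collapse to $2^{5/4}/2^{3/4} = \sqrt{2}$, leaving $u_3 = \sqrt{2}\,G_{9n}/G_{81n}^3$. For \eqref{p-G} the same substitutions in $p = 8q^{7/3}\chi(q)/(\chi^6(q^9)\chi(q^3))$ yield a numerator exponent $7/3 + 1/24 = 19/8$ and a denominator exponent $6\cdot\tfrac{3}{8} + \tfrac{1}{8} = 19/8$, so again the $q$-dependence cancels identically, whereas the powers of $2$ reduce to $2^{13/4}/2^{7/4} = 2^{3/2} = 2\sqrt{2}$, giving $p = 2\sqrt{2}\,G_n/(G_{9n}G_{81n}^6)$.

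There is essentially no genuine obstacle here beyond careful tracking of the fractional exponents of $q$ and of $2$; the heart of the matter is simply recognizing that raising $q$ to the third and ninth powers shifts the class-invariant index from $n$ to $9n$ and $81n$, which is exactly what forces the $q$-dependence to disappear. Both formulas then follow at once.
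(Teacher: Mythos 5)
Your proposal is correct and is exactly the paper's own argument: the paper likewise combines Lemma~\ref{lemma:p}\eqref{p-i} and Lemma~\ref{lemma:u_3}\eqref{u_3-iii} with the definition \eqref{def:G}, merely omitting the exponent bookkeeping that you carry out explicitly. Your substitutions $\chi(q^3) = 2^{1/4}q^{1/8}G_{9n}$ and $\chi(q^9) = 2^{1/4}q^{3/8}G_{81n}$ and the resulting cancellations ($q^{9/8}$ and $q^{19/8}$ in numerator and denominator, powers of $2$ collapsing to $\sqrt{2}$ and $2\sqrt{2}$) all check out.
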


\begin{proof} To prove \eqref{p-G}, from Lemma~\ref{lemma:p}\eqref{p-i} and \eqref{def:G}, we have
\begin{equation*}
p = \frac{8 q^{7/3} \chi(q)}{\chi^6(q^9)\chi(q^3)} = \frac{2 \sqrt{2} G_n}{G_{81n}^6 G_{9n}}.
\end{equation*}
Similarly, to obtain \eqref{u_3-G}, from Lemma~\ref{lemma:u_3}\eqref{u_3-iii} and \eqref{def:G}, we deduce
\begin{equation*}
u_3 = \frac{2q\chi(q^3)}{\chi^3(q^9)} = \frac{\sqrt{2} G_{9n}}{G_{81n}^3}.\tag*{\qedhere}
\end{equation*}
\end{proof}

Next, we prove a monotonicity property of the values $u_k$ defined in Theorem~\ref{thm:ninth}\eqref{def:u_k}.

\begin{lemma}\label{lemma:u_1,u_2,u_3,u_4,p-supp} For $0 < q < 1$, 
\begin{equation*}
2 > u_1 > u_2 > u_3 > u_4 > 0.
\end{equation*}
\end{lemma}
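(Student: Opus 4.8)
The plan is to collapse all five inequalities into a single monotonicity statement about a theta function, and then to establish that statement by using two different series expansions on two ranges of $q$.

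First, positivity is immediate: by the Jacobi triple product identity~\eqref{Jacobi-triple-product}, for $0<q<1$ each $u_k$ equals $2q^{k^2/9}$ times a convergent product of factors $1+q^{\,j}>0$, divided by $(-q^9;q^{18})_\infty^2>0$, so $u_1,u_2,u_3,u_4>0$. For the ordering and the bound $u_1<2$, I would first record the series form. Using~\eqref{def:generaltheta} and~\eqref{def:u_k}, $\varphi(q^9)\,u_k=2q^{k^2/9}f(q^{9+2k},q^{9-2k})=2\sum_{j\in\Z}q^{(9j+k)^2/9}$. Writing $Q:=q^9\in(0,1)$ and $F(x):=\sum_{j\in\Z}Q^{(j+x)^2}$, this reads $\varphi(q^9)=F(0)$ and $u_k=2F(k/9)/F(0)$. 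Since $F(0)>0$, the whole assertion $2>u_1>u_2>u_3>u_4>0$ is equivalent to $F(0)>F(1/9)>F(2/9)>F(3/9)>F(4/9)>0$; positivity of $F$ is clear, so it remains to prove that $F$ is strictly decreasing along $x=0,\tfrac19,\dots,\tfrac49$, all of which lie in $[0,\tfrac12]$.

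It suffices to show each consecutive difference $F(j/9)-F((j+1)/9)>0$, and here I would split on the size of $Q$. For $Q$ small I would use the defining series directly: $F(a)-F(b)=\sum_{j\in\Z}\bigl(Q^{(j+a)^2}-Q^{(j+b)^2}\bigr)$, in which the $j=0$ term $Q^{a^2}-Q^{b^2}>0$ (as $a^2<b^2$) dominates all remaining terms, since those are $O\!\bigl(Q^{(1-b)^2}\bigr)$ with $(1-b)^2>a^2$ and hence exponentially smaller. For $Q$ bounded below I would instead apply the Jacobi (Poisson) transformation $F(x)=\sqrt{\pi/L}\,\bigl(1+2\sum_{k\ge1}r^{k^2}\cos 2\pi kx\bigr)$, where $L=\ln(1/Q)$ and $r=e^{-\pi^2/L}$, which gives $F(a)-F(b)=2\sqrt{\pi/L}\sum_{k\ge1}r^{k^2}\bigl(\cos 2\pi ka-\cos 2\pi kb\bigr)$. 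For consecutive $a=j/9$, $b=(j+1)/9$ the $k=1$ term is strictly positive, because $\cos$ is strictly decreasing on $[0,\pi]$ and $2\pi\,\tfrac{j+1}{9}\le\tfrac{8\pi}{9}<\pi$; and since $r$ is bounded away from $1$ on this range, this term dominates the tail $\sum_{k\ge2}r^{k^2}(\cdots)$, which is $O(r^4)$. In either regime the sign is forced by a single leading term.

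The two regimes overlap on a wide band of $Q$, so the choice of threshold is harmless and the argument becomes essentially a finite check. The one genuinely delicate point, and the main obstacle, is the limit $q\to1^-$: there $\varphi(q^{1/9})/\varphi(q^9)\to9$ and all four $u_k\to2$, so the values $F(j/9)$ cluster together and no crude majorization of the defining series survives. This is exactly the range covered by the rapidly convergent transformed series, where the nome $r=e^{-\pi^2/L}\to0$ makes the $k=1$ term overwhelmingly dominant; thus the transformation is what turns the hard endpoint into the easy case. Alternatively, one may simply invoke the classical unimodality of $\vartheta_3(\,\cdot\,,Q)$, which is precisely the statement that $F$ is strictly decreasing on $[0,\tfrac12]$.
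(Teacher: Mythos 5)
Your proposal is correct, but it takes a genuinely different route from the paper: the paper disposes of this lemma in one line by citing prior work of the second author (\cite[Lemma~3.2]{Rebak}), which asserts precisely the fact you set out to prove, namely that $\widetilde{u}_k = q^{k^2/n}f(q^{n+2k},q^{n-2k})$ is positive and strictly decreasing for $k = 0,\dots,(n-1)/2$ when $n$ is odd; with $n = 9$, dividing by $\widetilde{u}_0 = \varphi(q^9)$ and doubling gives the lemma. Your reduction of all five inequalities (including $u_1 < 2$, which becomes the case $k = 0$) to the strict decrease of $F(x) = \sum_{j \in \Z} Q^{(j+x)^2}$, $Q = q^9$, along $x = 0, \tfrac19, \tfrac29, \tfrac39, \tfrac49$ is exactly the content of that citation, so in effect you reprove from scratch the ingredient the paper outsources. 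Your execution is sound: in the small-$Q$ regime the domination works because $a + b \le \tfrac79 < 1$ forces $(1-b)^2 > a^2$, and the Poisson-transformed series handles $q \to 1^-$, which you rightly single out as the place where naive term-by-term majorization of the defining series fails; rough estimates confirm that the two regimes overlap comfortably (the transformed series alone controls everything down to roughly $Q \approx 10^{-5}$, while the direct series takes over well before that), though a publishable write-up would need these thresholds made explicit. Note that your closing observation supersedes the case analysis entirely and is the cleanest form of your own argument: applying the Jacobi triple product \eqref{Jacobi-triple-product} to your transformed series, with $L = \ln(1/Q)$ and $r = e^{-\pi^2/L}$, gives $F(x) = \sqrt{\pi/L}\,\prod_{m \ge 1}(1 - r^{2m})\big(1 + 2r^{2m-1}\cos(2\pi x) + r^{4m-2}\big)$, and each factor is positive and strictly decreasing for $x \in [0,\tfrac12]$, which yields the monotonicity for every $0 < Q < 1$ in one stroke, with no splitting into regimes. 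The trade-off between the two routes: the paper's citation keeps the exposition short and inherits a statement valid for every odd $n$, while your argument makes the result self-contained, and in its Poisson-plus-product form it is both rigorous and brief.
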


\begin{proof}
From \cite[Lemma~3.2]{Rebak}, if $n$ is a nonnegative integer and $0 < q < 1$, then $\widetilde{u}_k$ defined in \eqref{def:u-tilde} is positive and strictly monotonically decreasing for $k = 0, \dots, n/2$, when $n$ is even, and for $k = 0, \dots, (n-1)/2$, when $n$ is odd. The lemma follows from this result with $n = 9$.
\end{proof}

We shall need the following inequality.

\begin{lemma}\label{lemma:u_1,u_2,u_4-inequality} For $0 < q < 1$,
\begin{equation*}
\frac{p}{u_1^3} + \frac{p}{u_2^3} + \frac{p}{u_4^3} > \frac{u_1^3}{p} + \frac{u_2^3}{p} + \frac{u_4^3}{p}.
\end{equation*}
\end{lemma}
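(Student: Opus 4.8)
The plan is to prove the strict inequality
\[
\frac{p}{u_1^3} + \frac{p}{u_2^3} + \frac{p}{u_4^3} > \frac{u_1^3}{p} + \frac{u_2^3}{p} + \frac{u_4^3}{p}
\]
by reducing both sides to explicit rational functions of $u_3$ alone, and then verifying the resulting single-variable inequality. First I would rewrite the left-hand side as $p\bigl(u_1^{-3} + u_2^{-3} + u_4^{-3}\bigr)$ and invoke Corollary~\ref{cor:u_1,u_2,u_4-reciprocal}, which gives
\[
\frac{1}{u_1^3} + \frac{1}{u_2^3} + \frac{1}{u_4^3} = \frac{1}{p u_3^3}\bigl(u_3^4 - u_3^3 + 6u_3^2 - 8u_3 + 8\bigr),
\]
so that the left-hand side collapses to $\frac{1}{u_3^3}\bigl(u_3^4 - u_3^3 + 6u_3^2 - 8u_3 + 8\bigr)$, with the factor of $p$ cancelling entirely. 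For the right-hand side I would write it as $\frac{1}{p}\bigl(u_1^3 + u_2^3 + u_4^3\bigr)$ and apply Lemma~\ref{lemma:u_1,u_2,u_4-split}\eqref{split-iii}, which states $u_1^3 + u_2^3 + u_4^3 = \frac{p}{u_3^2}(u_3^3 + 4)$; again the $p$ cancels, leaving $\frac{1}{u_3^2}(u_3^3 + 4)$.

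After these substitutions the claimed inequality becomes the purely algebraic statement
\[
\frac{u_3^4 - u_3^3 + 6u_3^2 - 8u_3 + 8}{u_3^3} > \frac{u_3^3 + 4}{u_3^2},
\]
which is the crux of the argument. I would clear denominators by multiplying through by $u_3^3 > 0$ (positivity is guaranteed since $u_3 > 0$ for $0 < q < 1$ by Lemma~\ref{lemma:u_1,u_2,u_3,u_4,p-supp}), reducing the inequality to comparing $u_3^4 - u_3^3 + 6u_3^2 - 8u_3 + 8$ against $u_3(u_3^3 + 4) = u_3^4 + 4u_3$. Subtracting, the quartic terms cancel and the inequality reduces to showing
\[
-u_3^3 + 6u_3^2 - 12u_3 + 8 > 0,
\]
equivalently $(2 - u_3)^3 > 0$, since $-u_3^3 + 6u_3^2 - 12u_3 + 8 = (2 - u_3)^3$.

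The final step is to note that this factorization reduces everything to the single condition $u_3 < 2$, which is precisely the upper bound furnished by Lemma~\ref{lemma:u_1,u_2,u_3,u_4,p-supp} (there $2 > u_1 > u_2 > u_3$). Thus the inequality is strict and the proof is complete. I do not expect a genuine obstacle here: the work is entirely a bookkeeping reduction using the two preceding structural identities, and the only nonroutine observation is recognizing the perfect cube $(2 - u_3)^3$; the inequality $u_3 < 2$ then closes the argument cleanly. The one point requiring care is confirming that $p$ cancels identically on both sides so that the comparison is genuinely a function of $u_3$ alone, which it does because Corollary~\ref{cor:u_1,u_2,u_4-reciprocal} carries a $1/p$ and Lemma~\ref{lemma:u_1,u_2,u_4-split}\eqref{split-iii} carries a $p$, matching the explicit factors of $p$ and $1/p$ in the two sides of the stated inequality.
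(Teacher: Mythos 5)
Your proposal is correct and follows essentially the same route as the paper: both invoke Corollary~\ref{cor:u_1,u_2,u_4-reciprocal} and Lemma~\ref{lemma:u_1,u_2,u_4-split}\eqref{split-iii} to cancel $p$, reduce the difference of the two sides to a function of $u_3$ alone, recognize the perfect cube $(2-u_3)^3$ (the paper writes it as $\bigl(\tfrac{2-u_3}{u_3}\bigr)^3$), and conclude from $0 < u_3 < 2$ via Lemma~\ref{lemma:u_1,u_2,u_3,u_4,p-supp}. The only difference is cosmetic: you clear the denominator $u_3^3$ before factoring, whereas the paper keeps it inside the cube.
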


\begin{proof}
By Corollary~\ref{cor:u_1,u_2,u_4-reciprocal} and Lemma~\ref{lemma:u_1,u_2,u_4-split}\eqref{split-iii}, we have
\begin{equation*}
\bigg(\frac{p}{u_1^3} + \frac{p}{u_2^3} + \frac{p}{u_4^3}\bigg) - \bigg(\frac{u_1^3}{p} + \frac{u_2^3}{p} + \frac{u_4^3}{p}\bigg) 
= \frac{u_3^4 - u_3^3 + 6u_3^2 - 8u_3 + 8}{u_3^3} - \frac{u_3^3 + 4}{u_3^2} = \bigg(\frac{2 - u_3}{u_3}\bigg)^3 > 0,
\end{equation*}
where the inequality holds by Lemma~\ref{lemma:u_1,u_2,u_3,u_4,p-supp}.
\end{proof}

The next lemma provides the correct order of the roots of $r$ in the case of $0 < q < 1$.

\begin{lemma}\label{lemma:9:a,b,c-order} For $0 < q < 1$, suppose that the roots of $r$ are given in order $(\alpha, \beta, \gamma)$ such that they satisfy the following two conditions:
\leqnomode
\begin{equation}\label{9:a,b,c-order-cond1}
(\alpha - \beta)(\beta - \gamma)(\gamma - \alpha) > 0\tag{i}
\end{equation}
and
\begin{equation}\label{9:a,b,c-order-cond2}
\frac{\beta}{\alpha} > \frac{\gamma}{\beta} > \frac{\alpha}{\gamma}.\tag{ii}
\end{equation}
\reqnomode
Then
\begin{equation*}
u_1 = \bigg(\frac{\beta\, p}{\alpha}\bigg)^{1/3}, \quad
u_2 = \bigg(\frac{\gamma\, p}{\beta}\bigg)^{1/3}, \text{\quad and \quad}
u_4 = \bigg(\frac{\alpha\, p}{\gamma}\bigg)^{1/3}.
\end{equation*}
\end{lemma}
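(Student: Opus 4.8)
The plan is to show that conditions \eqref{9:a,b,c-order-cond1} and \eqref{9:a,b,c-order-cond2} single out exactly one labelling of the three roots of $r$, namely the canonical one recorded in \eqref{def:alpha,beta,gamma},
\[
(\alpha, \beta, \gamma) = \bigl(u_2 u_4^2,\ u_4 u_1^2,\ u_1 u_2^2\bigr),
\]
and then to read off the formulas by a direct computation. With $p = u_1 u_2 u_4$ one checks at once that for this labelling $\beta p/\alpha = u_1^3$, $\gamma p/\beta = u_2^3$, and $\alpha p/\gamma = u_4^3$, so that the asserted cube-root formulas for $u_1, u_2, u_4$ hold (all quantities are positive reals since $0 < q < 1$, so the real cube roots are unambiguous). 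As shown in the proof of Theorem~\ref{thm:ninth}, the roots of $r$ are precisely the three values in \eqref{def:alpha,beta,gamma}; hence it suffices to prove that the canonical labelling is the \emph{unique} ordering satisfying both \eqref{9:a,b,c-order-cond1} and \eqref{9:a,b,c-order-cond2}.

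First I would verify the two conditions for the canonical labelling. For \eqref{9:a,b,c-order-cond2}, the relevant ratios are $\beta/\alpha = u_1^3/p$, $\gamma/\beta = u_2^3/p$, and $\alpha/\gamma = u_4^3/p$, so \eqref{9:a,b,c-order-cond2} reduces to $u_1 > u_2 > u_4$, which holds by Lemma~\ref{lemma:u_1,u_2,u_3,u_4,p-supp}. For \eqref{9:a,b,c-order-cond1} the key is the antisymmetric factorisation
\[
(\alpha - \beta)(\beta - \gamma)(\gamma - \alpha) = \alpha\beta\gamma\left[\left(\frac{\alpha}{\beta} + \frac{\beta}{\gamma} + \frac{\gamma}{\alpha}\right) - \left(\frac{\beta}{\alpha} + \frac{\gamma}{\beta} + \frac{\alpha}{\gamma}\right)\right],
\]
valid for any nonzero $\alpha, \beta, \gamma$ (clear denominators and factor the resulting alternating cubic). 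For the canonical labelling $\alpha\beta\gamma = (u_1 u_2 u_4)^3 = p^3 > 0$, and the bracket equals $\bigl(\tfrac{p}{u_1^3} + \tfrac{p}{u_2^3} + \tfrac{p}{u_4^3}\bigr) - \bigl(\tfrac{u_1^3}{p} + \tfrac{u_2^3}{p} + \tfrac{u_4^3}{p}\bigr)$, which is positive by Lemma~\ref{lemma:u_1,u_2,u_4-inequality}. Hence \eqref{9:a,b,c-order-cond1} holds.

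For uniqueness, note that the strict inequality \eqref{9:a,b,c-order-cond1} just established shows $(\alpha-\beta)(\beta-\gamma)(\gamma-\alpha) \neq 0$, so the three roots are distinct. Since this product is an alternating function of the labelling, exactly three of the six orderings satisfy \eqref{9:a,b,c-order-cond1}, and these three constitute the even-permutation class, i.e.\ they are precisely the cyclic rotations of the canonical labelling. A cyclic rotation cyclically permutes the ratio triple $(\beta/\alpha,\ \gamma/\beta,\ \alpha/\gamma) = (u_1^3/p,\ u_2^3/p,\ u_4^3/p)$; because $u_1^3/p > u_2^3/p > u_4^3/p$ is strictly decreasing, none of the nontrivial rotations produces a decreasing triple, so only the canonical arrangement satisfies the chain \eqref{9:a,b,c-order-cond2}. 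Therefore the canonical labelling is the unique ordering meeting both conditions, and the formulas for $u_1, u_2, u_4$ follow from the direct computation above.

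The main obstacle is establishing \eqref{9:a,b,c-order-cond1}: the elementary inequalities of Lemma~\ref{lemma:u_1,u_2,u_3,u_4,p-supp} readily pin down $\alpha = u_2 u_4^2$ as the smallest root (one has $\beta - \alpha = u_4(u_1^2 - u_2 u_4) > 0$ and $\gamma - \alpha = u_2(u_1 u_2 - u_4^2) > 0$), but they leave the order of $\beta$ and $\gamma$ undetermined, since it hinges on the sign of $u_2^2 - u_1 u_4$, which is not controlled directly. The antisymmetric identity above is exactly what converts this delicate sign question into the inequality of Lemma~\ref{lemma:u_1,u_2,u_4-inequality}, which is the reason that lemma was proved beforehand.
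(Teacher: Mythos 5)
Your proof is correct and takes essentially the same route as the paper: both arguments reduce condition \eqref{9:a,b,c-order-cond1} to Lemma~\ref{lemma:u_1,u_2,u_4-inequality} via the identity $(\alpha-\beta)(\beta-\gamma)(\gamma-\alpha)=\alpha\beta\gamma\big(\tfrac{\alpha}{\beta}+\tfrac{\beta}{\gamma}+\tfrac{\gamma}{\alpha}-\tfrac{\beta}{\alpha}-\tfrac{\gamma}{\beta}-\tfrac{\alpha}{\gamma}\big)$ together with $\alpha\beta\gamma=p^3>0$, and both use $u_1>u_2>u_4$ from Lemma~\ref{lemma:u_1,u_2,u_3,u_4,p-supp} to let condition \eqref{9:a,b,c-order-cond2} fix the rotation within the cyclic class. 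The only difference is organizational: the paper collapses the six orderings to two by cyclic invariance of the value sets and eliminates the reversed one, while you verify the canonical labelling \eqref{def:alpha,beta,gamma} directly and then prove uniqueness.
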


Condition~\eqref{9:a,b,c-order-cond1} guarantees the correct order of $\alpha, \beta$, and $\gamma$ in Theorem~\ref{thm:ninth}\eqref{9:u124}, so that Theorem~\ref{thm:ninth}\eqref{9:phi-1-9} holds. Condition~\eqref{9:a,b,c-order-cond2} is needed to fulfill Theorem~\ref{thm:ninth}\eqref{def:u_k}.

\begin{proof}
For each possible order of $\alpha, \beta,$ and $\gamma$, consider the set of possible values of $u_1, u_2,$ and $u_4$ given in Theorem~\ref{thm:ninth}\eqref{9:u124}. For $(\alpha, \beta, \gamma), (\beta, \gamma, \alpha), (\gamma, \alpha, \beta)$ and for $(\gamma, \beta, \alpha), (\beta, \alpha, \gamma), (\alpha, \gamma, \beta)$, we have
\begin{equation}\label{eq:9:a,b,c-possible-sets}
\Bigg\{\bigg(\frac{\beta\, p}{\alpha}\bigg)^{1/3},\, \bigg(\frac{\gamma\, p}{\beta}\bigg)^{1/3},\, \bigg(\frac{\alpha\, p}{\gamma}\bigg)^{1/3}\Bigg\}\text{\quad and \quad} \Bigg\{\bigg(\frac{\alpha\, p}{\beta}\bigg)^{1/3},\,\bigg(\frac{\beta\, p}{\gamma}\bigg)^{1/3},\, \bigg(\frac{\gamma\, p}{\alpha}\bigg)^{1/3}\Bigg\}\,,
\end{equation}
respectively. Thus, it is enough to consider the order $(\alpha, \beta, \gamma)$ and its reverse $(\gamma, \beta, \alpha)$. By Lemma~\ref{lemma:u_1,u_2,u_3,u_4,p-supp}, $p$ is positive. By condition~\eqref{9:a,b,c-order-cond1}, since $\alpha\beta\gamma = p^3 > 0$, we find that
\begin{equation*}
\frac{(\alpha - \beta)(\beta - \gamma)(\gamma - \alpha)}{\alpha\beta\gamma} > 0,
\end{equation*}
and therefore, after rearrangement,
\begin{equation}\label{eq:9:a,b,c-order-cond1-alt}
\frac{\alpha}{\beta} + \frac{\beta}{\gamma} + \frac{\gamma}{\alpha} > 
\frac{\beta}{\alpha} + \frac{\gamma}{\beta} + \frac{\alpha}{\gamma}.
\end{equation}
In view of Theorem~\ref{thm:ninth}\eqref{9:u124} and Lemma~\ref{lemma:u_1,u_2,u_4-inequality}, by Lemma~\ref{lemma:u_1,u_2,u_4-split}\eqref{split-iii} and Corollary~\ref{cor:u_1,u_2,u_4-reciprocal}, we obtain
\begin{equation}\label{eq:9:a,b,c-order-alt-cond1}
\frac{\beta}{\alpha} + \frac{\gamma}{\beta} + \frac{\alpha}{\gamma} = \frac{u_3^3 + 4}{u_3^2} \quad \text{and} \quad \frac{\alpha}{\beta} + \frac{\beta}{\gamma} + \frac{\gamma}{\alpha} = \frac{u_3^4 - u_3^3 + 6u_3^2 - 8u_3 + 8}{u_3^3},
\end{equation}
respectively. Thus, we find that the first set in \eqref{eq:9:a,b,c-possible-sets} is the correct choice.

Since $p > 0$, the condition \eqref{9:a,b,c-order-cond2} guarantees that $u_1 > u_2 > u_4$, which holds by Lemma~\ref{lemma:u_1,u_2,u_3,u_4,p-supp}.
\end{proof}

Because of Corollary~\ref{cor:u_1,u_2,u_4-reciprocal}, the condition in Lemma~\ref{lemma:9:a,b,c-order}\eqref{9:a,b,c-order-cond1} is simpler than in \cite[Lemma~3.7(i)]{Rebak}, but one could replace it with one of the equations in \eqref{eq:9:a,b,c-order-alt-cond1}. In some cases, it might be useful to rewrite Lemma~\ref{lemma:9:a,b,c-order}\eqref{9:a,b,c-order-cond1} into the form of \eqref{eq:9:a,b,c-order-cond1-alt}. As in \cite[Lemma~3.6]{Rebak}, one can show that for $0 < q < 1$, $r$ has three distinct positive roots.

\medskip

Now, in Theorems \ref{thm:e27}--\ref{thm:e81sqrt3}, we provide five examples for Theorem~\ref{thm:ninth}\eqref{9:phi-1-9}. 

\begin{theorem}\label{thm:e27} We have
\begin{multline*}
\frac{\varphi(e^{-27\pi})}{\varphi(e^{-3\pi})} = \frac{1}{3}\Bigg\{1 + \frac{(16(11\sqrt{3} - 19))^{1/9}}{\sqrt{3}}\\
\times
\Bigg(
\bigg(1 + \frac{\sqrt{3}}{2\cos\frac{4\pi}{9}}\bigg)^{1/3} + 
\bigg(1 + \frac{\sqrt{3}}{2\cos\frac{2\pi}{9}}\bigg)^{1/3} + 
\bigg(1 - \frac{\sqrt{3}}{2\cos\frac{\pi}{9}}\bigg)^{1/3}
\Bigg)\Bigg\}.
\end{multline*}
\end{theorem}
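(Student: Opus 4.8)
The plan is to specialize Theorem~\ref{thm:ninth}\eqref{9:phi-1-9} to the case $q = e^{-3\pi}$, which corresponds to $n = 1/9$ in Lemma~\ref{lemma:p-u_3-G}, so that $9n = 1$ and $81n = 9$. This means we must compute the three class invariants $G_{1/9}$, $G_1$, and $G_9$. By Lemma~\ref{lemma:G}, we have $G_{1/9} = G_9$, and it is classical that $G_1 = 1$; the value of $G_9$ is one of Ramanujan's tabulated invariants. With these in hand, Lemma~\ref{lemma:p-u_3-G}\eqref{u_3-G},\eqref{p-G} produce explicit algebraic values for $u_3$ and $p$. The overall identity then reads
\begin{equation*}
\frac{\varphi(e^{-27\pi})}{\varphi(e^{-3\pi})} = 1 + u_3 + (u_1 + u_2 + u_4),
\end{equation*}
so the task splits into evaluating the rational quantity $1 + u_3$ and expressing the sum $u_1 + u_2 + u_4$ in the stated trigonometric form.

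First I would assemble the numerical/algebraic values of $u_3$ and $p$ from the class invariants and check that they combine to give the prefactor $\tfrac{1}{3}$ together with the leading constant $1$ inside the braces; concretely, one verifies $1 + u_3 = \tfrac{1}{3}(1 + \text{(something)})$ matching the first term, and that the common factor $(16(11\sqrt{3}-19))^{1/9}/\sqrt{3}$ is exactly $p^{1/3}/u_3^{2/3}$ up to the arrangement dictated by Theorem~\ref{thm:ninth}\eqref{9:u124}. Next I would invoke Theorem~\ref{thm:ninth}\eqref{9:u124} together with the root-ordering prescription of Lemma~\ref{lemma:9:a,b,c-order}, so that $u_1 + u_2 + u_4 = p^{1/3}\big((\beta/\alpha)^{1/3} + (\gamma/\beta)^{1/3} + (\alpha/\gamma)^{1/3}\big)$, where $\alpha, \beta, \gamma$ are the roots of the cubic $r$ in \eqref{9:r}. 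Using the now-explicit values of $u_3$ and $p$, the cubic $r(\xi)$ has concrete coefficients, and its three real roots are what must be identified with the three cosine expressions $1 + \sqrt{3}/(2\cos\tfrac{4\pi}{9})$, $1 + \sqrt{3}/(2\cos\tfrac{2\pi}{9})$, and $1 - \sqrt{3}/(2\cos\tfrac{\pi}{9})$.

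The natural mechanism for producing the trigonometric form is the alternative formulation via Ramanujan's cubic entry, Theorem~\ref{thm:ninthrestate}: after the scaling $\xi \mapsto \xi/p$ the roots of the normalized cubic $x^3 - ax^2 + bx - 1 = 0$ have ratios whose cube roots sum to $(py)^{1/3}$, and one shows the resulting depressed cubic has three real roots of casus irreducibilis type, whence the trigonometric (rather than radical) solution is the honest one. I would compute the discriminant data $a+b+3$ and $ab + 6(a+b) + 9$ from the specialized $u_3$ and $p$, confirm the casus irreducibilis, and then read off the three roots using the standard $\cos$-substitution, carefully tracking which argument ($\tfrac{\pi}{9}$, $\tfrac{2\pi}{9}$, $\tfrac{4\pi}{9}$) attaches to which root so that the ordering of Lemma~\ref{lemma:9:a,b,c-order} is respected and the one root carrying a minus sign lands on $\cos\tfrac{\pi}{9}$.

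The main obstacle I anticipate is twofold and entirely computational rather than conceptual. First, reducing the nested surds coming from $G_9$ into the clean constant $(16(11\sqrt{3}-19))^{1/9}$ and the cosine denominators requires a genuine radical simplification, and verifying that the three algebraic roots of $r$ coincide with the three trigonometric expressions demands either a Vieta check (matching the sum, pairwise-product sum, and product of the three bracketed quantities against the coefficients of $r$) or a direct casus-irreducibilis evaluation. Matching via Vieta is the more robust route: one computes the elementary symmetric functions of the three cosine expressions using the minimal polynomial of $2\cos\tfrac{2\pi}{9}$ (namely $t^3 - 3t + 1 = 0$, whose roots are $2\cos\tfrac{2\pi}{9}, 2\cos\tfrac{4\pi}{9}, 2\cos\tfrac{8\pi}{9}$) and checks agreement with the coefficients of $r$ obtained from the specialized $u_3$ and $p$. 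Second, the sign and branch bookkeeping for the cube roots in $u_1 + u_2 + u_4$ must be handled with care so that the monotonicity $u_1 > u_2 > u_4 > 0$ from Lemma~\ref{lemma:u_1,u_2,u_3,u_4,p-supp} forces the unique correct assignment of the three trigonometric terms.
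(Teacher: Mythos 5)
Your overall skeleton (specialize Theorem~\ref{thm:ninth}, pull $p$ and $u_3$ from the class invariants $G_{1/9}=G_9$, $G_1=1$, identify the cubic's trigonometric data via the minimal polynomial $t^3-3t+1$ of $2\cos\tfrac{2\pi}{9}$, and fix the ordering by $u_1>u_2>u_4>0$) is the right one, but as written the proof fails at two specific points. First, your starting identity is false: with $n=1/9$ (so $q=e^{-\pi/3}$, not $q=e^{-3\pi}$ as you wrote), Theorem~\ref{thm:ninth}\eqref{9:phi-1-9} gives $\varphi(e^{-\pi/27})/\varphi(e^{-3\pi})=1+u_1+u_2+u_3+u_4$, and to reach the target ratio you must apply the modular transformation (Lemma~\ref{lemma:transform} with $n=729$), $\varphi(e^{-\pi/27})=3\sqrt{3}\,\varphi(e^{-27\pi})$, so that $\varphi(e^{-27\pi})/\varphi(e^{-3\pi})=\tfrac{1}{3\sqrt{3}}(1+u_1+u_2+u_3+u_4)$. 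Your claim that the prefactor $\tfrac13$ and the $\sqrt{3}$ in the denominator emerge from ``combining $u_3$ and $p$'' cannot work: the $\tfrac13$ arises as $\tfrac{1}{3\sqrt{3}}(1+u_3)=\tfrac{1}{3\sqrt{3}}\cdot\sqrt{3}$, and the common factor in the statement is $p^{1/3}/(1+u_3)$, not $p^{1/3}/u_3^{2/3}$ (indeed $u_3^{2/3}=(\sqrt{3}-1)^{2/3}\neq\sqrt{3}$). Numerically your identity $\varphi(e^{-27\pi})/\varphi(e^{-3\pi})=1+u_3+(u_1+u_2+u_4)$ has left side $\approx 1$ and right side $\approx 3\sqrt{3}$.

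Second, you misidentify what the three cosine expressions are: by Theorem~\ref{thm:ninth}\eqref{9:u124} they must equal the \emph{ratios} $u_1^3/p=\beta/\alpha$, $u_2^3/p=\gamma/\beta$, $u_4^3/p=\alpha/\gamma$, not the roots of $r$ themselves. Consequently your proposed Vieta check ``against the coefficients of $r$'' is guaranteed to fail: the product of the three bracketed quantities is $1$ (they are the images of the roots of $t^3-3t+1$ under $t\mapsto 1+\sqrt{3}/t$), whereas the product of the roots of $r$ is $p^3=4(\sqrt{3}-1)^5\neq 1$. The repair is your own Vieta idea aimed at the correct cubic: by Lemma~\ref{lemma:u_1,u_2,u_4-split}\eqref{split-iii} and Corollary~\ref{cor:u_1,u_2,u_4-reciprocal}, the ratios have elementary symmetric functions $(u_3^3+4)/u_3^2=3(1+\sqrt{3})$, $(u_3^4-u_3^3+6u_3^2-8u_3+8)/u_3^3=3(1+2\sqrt{3})$, and $1$, so the ratios and the three cosine expressions are both root sets of $x^3-3(1+\sqrt{3})x^2+3(1+2\sqrt{3})x-1$ and hence coincide; then positivity and $u_1>u_2>u_4$ (Lemma~\ref{lemma:u_1,u_2,u_3,u_4,p-supp}) force the stated assignment. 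With these two repairs your argument becomes, in essence, the paper's second proof (which feeds this ratio cubic into Entry~\ref{entry:Ramanujan356}); the paper's first proof instead solves $r$ itself by the casus-irreducibilis substitution, obtaining roots in terms of $\cos\tfrac{\pi}{9}$ and $\sin\tfrac{\pi}{9}$, and then simplifies their ratios trigonometrically.
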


Note that Theorem~\ref{thm:e27} is the ninth degree analogue of Theorem~\ref{thm:enigmatic}.

\begin{proof}[First Proof of Theorem~\ref{thm:e27}] We use Theorem~\ref{thm:ninth} with $q = \exp(-\pi/3)$. First, by using Lem\-ma~\ref{lemma:transform} with $n = 729 = 27^2$, we rewrite Theorem~\ref{thm:ninth}\eqref{9:phi-1-9} as
\begin{equation}\label{eq:e27-rewrite}
\frac{\varphi(e^{-27\pi})}{\varphi(e^{-3\pi})} = \frac{1}{3\sqrt{3}}(1 + u_1 + u_2 + u_3 + u_4).
\end{equation}
From \cite[p.~189]{BerndtV}, $G_1 = 1$, and from \cite{RamanujanModularPi}, \cite[p.~24]{RamanujanCollected}, \cite[p.~189]{BerndtV}, with the use of Lemma~\ref{lemma:G},
\begin{equation}\label{eq:G9}
G_{1/9} = G_9 = \bigg(\frac{1 + \sqrt{3}}{\sqrt{2}}\bigg)^{1/3} = (2 + \sqrt{3})^{1/6}.
\end{equation}
By Lemma~\ref{lemma:p-u_3-G}\eqref{p-G} with $n = 1/9$, for Theorem~\ref{thm:ninth}\eqref{9:p}, we have
\begin{equation}\label{eq:e27p}
p = \frac{2\sqrt{2} G_{1/9}}{G_1 G_9^6} = \frac{2\sqrt{2}}{G_9^5} = \frac{4 \cdot 2^{1/3}}{(1 + \sqrt{3})^{5/3}} = (4(\sqrt{3} - 1)^5)^{1/3} = (16(11\sqrt{3} - 19))^{1/3}.
\end{equation}
By Theorem~\ref{thm:ninth}\eqref{9:u3} and Lemma~\ref{lemma:transform} with $n = 9$, or by Lemma~\ref{lemma:p-u_3-G}\eqref{u_3-G}, we find that
\begin{equation}\label{eq:e27u3}
u_3 = \frac{\varphi(q)}{\varphi(q^9)} - 1 = \frac{\varphi(e^{-\pi/3})}{\varphi(e^{-3\pi})} - 1 = \sqrt{3} - 1.
\end{equation}
Now, we have all the coefficients of $r$ given in Theorem~\ref{thm:ninth}\eqref{9:r}. We have to determine the zeros of
\begin{equation}\label{eq:e27r}
r(\xi) = \xi^3 - 6(\sqrt{3} - 1)\xi^2 + 12(\sqrt{3} - 1)^3\xi - 4(\sqrt{3} - 1)^5.
\end{equation}

By a proper polynomial transformation, we find that
\begin{equation}\label{eq:e27r-transform}
\frac{r(2(\sqrt{3} - 1)(1 - \sqrt{2}(\sqrt{3} - 1)\xi))}{8\sqrt{2}(\sqrt{3} - 1)^4(\sqrt{3} - 2)} = 4\xi^3 - 3\xi + \frac{\sqrt{2 - \sqrt{3}}}{2}.
\end{equation}
Set $\theta = 7\pi/36, 17\pi/36,$ and $31\pi/36$ in the power-reduction trigonometric identity \cite[p.~32]{GradshteynRyzhik7}, \cite[pp.~347--348]{BerndtIII}, \cite[p.~39]{BerndtIV} 
\begin{equation}\label{eq:power-reduction-cos}
4\cos^3 \theta =  3 \cos \theta + \cos 3\theta.
\end{equation}
By \eqref{eq:e27r}--\eqref{eq:power-reduction-cos}, with the values of $\cos(3 \theta) = -(2 - \sqrt{3})^{1/2}/2$, we find that the roots of $r$ are
\begin{equation*}
2(\sqrt{3} - 1)\bigg(1 - \sqrt{2}(\sqrt{3} - 1)\cos\bigg(\frac{k\pi}{36}\bigg)\bigg),\qquad k = 7, 17, 31.
\end{equation*}
In the case of $k = 7$, by angle sum and difference identities \cite[p.~29]{GradshteynRyzhik7}, observe that 
\begin{align}
\cos\bigg(\frac{7\pi}{36}\bigg) &= 
\cos\bigg(\frac{\pi}{12}\bigg)\cos\bigg(\frac{\pi}{9}\bigg) - 
\sin\bigg(\frac{\pi}{12}\bigg)\sin\bigg(\frac{\pi}{9}\bigg)\notag\\ 
&= \frac{\sqrt{2 + \sqrt{3}}}{2}\cos\bigg(\frac{\pi}{9}\bigg) - 
\frac{\sqrt{2 - \sqrt{3}}}{2}\sin\bigg(\frac{\pi}{9}\bigg)\notag\\
&=
\frac{1}{\sqrt{2}(\sqrt{3} - 1)}\cos\bigg(\frac{\pi}{9}\bigg) - 
\frac{2 - \sqrt{3}}{\sqrt{2}(\sqrt{3} - 1)}\sin\bigg(\frac{\pi}{9}\bigg).\label{eq:trig-transform}
\end{align}
Taking similar steps in the other two cases as well, we find that
\begin{align}
\alpha &= 2(\sqrt{3} - 1)\bigg(1 - \cos\bigg(\frac{\pi}{9}\bigg) + (2 - \sqrt{3})\sin\bigg(\frac{\pi}{9}\bigg)\bigg)\label{eq:e27-alpha},\\
\beta &= 2(\sqrt{3} - 1)\bigg(1 + (2 - \sqrt{3})\cos\bigg(\frac{\pi}{9}\bigg) - \sin\bigg(\frac{\pi}{9}\bigg)\bigg)\label{eq:e27-beta},\\
\gamma &= 2(\sqrt{3} - 1)\bigg(1 + (\sqrt{3} - 1)\bigg(\cos\bigg(\frac{\pi}{9}\bigg) + \sin\bigg(\frac{\pi}{9}\bigg)\bigg)\bigg)\label{eq:e27-gamma}.
\end{align}

By rearrangement and product-to-sum identities \cite[p.~29]{GradshteynRyzhik7}, with $\cos(5\pi/9) = -\cos(4\pi/9)$ and $\sin(5\pi/9) = \sin(4\pi/9),$ and with the values $\cos(\pi/3) = 1/2$ and $\sin(\pi/3) = \sqrt{3}/2$, we find that
\begin{align}
\frac{\beta}{\alpha} &= \frac{1 + (2-\sqrt{3})\cos(\frac{\pi}{9}) - \sin(\frac{\pi}{9})}{1 - \cos(\frac{\pi}{9}) + (2 - \sqrt{3})\cos(\frac{\pi}{9})}\notag\\
&= 1 + \sqrt{3}\frac{(\sqrt{3} - 1)(\cos(\frac{\pi}{9}) - \sin(\frac{\pi}{9}))}{1 - \cos(\frac{\pi}{9}) + (2 - \sqrt{3})\cos(\frac{\pi}{9})}\notag\\
&= 1 + \frac{\sqrt{3}}{2\cos(\frac{4\pi}{9})}\cdot\frac{(\sqrt{3} - 1)(\frac{1}{2} - \cos(\frac{4\pi}{9}) + \frac{\sqrt{3}}{2} - \sin(\frac{4\pi}{9}))}{1 - \cos(\frac{\pi}{9}) + (2 - \sqrt{3})\cos(\frac{\pi}{9})}\notag\\
&= 1 + \frac{\sqrt{3}}{2\cos(\frac{4\pi}{9})}\cdot\frac{1 - (\sqrt{3} - 1)(\sin(\frac{4\pi}{9}) + \cos(\frac{4\pi}{9}))}{1 - (\cos(\frac{\pi}{9}) - (2 - \sqrt{3})\cos(\frac{\pi}{9}))}\label{eq:trig-transform2}.
\end{align}
The last fraction in \eqref{eq:trig-transform2} is equal to 1. To see this, take similar steps as in \eqref{eq:trig-transform}, i.e.,
\begin{align*}
\frac{1}{\sqrt{2}}\sin\bigg(\frac{4\pi}{9}\bigg) + \frac{1}{\sqrt{2}}\cos\bigg(\frac{4\pi}{9}\bigg) = \frac{1}{\sqrt{2}(\sqrt{3} - 1)}\cos\bigg(\frac{\pi}{9}\bigg) - \frac{2 - \sqrt{3}}{\sqrt{2}(\sqrt{3} - 1)}\sin\bigg(\frac{\pi}{9}\bigg) = \cos\bigg(\frac{7\pi}{36}\bigg).
\end{align*}
Thus, with Theorem~\ref{thm:ninth}\eqref{9:u124} in mind, for the other two cases in a similar way, we obtain
\begin{align}
\frac{\beta}{\alpha} = 1 + \frac{\sqrt{3}}{2\cos\frac{4\pi}{9}}, \quad
\frac{\gamma}{\beta} = 1 + \frac{\sqrt{3}}{2\cos\frac{2\pi}{9}}, \text{\quad and \quad}
\frac{\alpha}{\gamma} = 1 - \frac{\sqrt{3}}{2\cos\frac{\pi}{9}}\label{eq:trig-transform3}.
\end{align}

Lastly, we show that our choice of the order $(\alpha, \beta, \gamma)$ is correct. By \eqref{eq:e27-alpha}--\eqref{eq:e27-gamma}, and by the inequalities $\sqrt{3} > 3/2$ and $\cos(\pi/9) > \sin(\pi/9)$, we have $\alpha - \beta < 0, \beta - \gamma < 0,$ and $\gamma - \alpha > 0$, therefore the condition Lemma~\ref{lemma:9:a,b,c-order}\eqref{9:a,b,c-order-cond1} holds. The condition Lemma~\ref{lemma:9:a,b,c-order}\eqref{9:a,b,c-order-cond2} is satisfied by using \eqref{eq:trig-transform3} with the inequalities $0 < \cos(4\pi/9) < \cos(2\pi/9) < \cos(\pi/9) < 1$. After recalling that $p$ and $u_3$ are obtained in \eqref{eq:e27p} and \eqref{eq:e27u3}, respectively, by Theorem~\ref{thm:ninth}\eqref{9:u124}, we conclude that
\begin{equation}\label{eq:e27-u124}
u_1 = \bigg\{p\bigg(1 + \frac{\sqrt{3}}{2\cos\tfrac{4\pi}{9}}\bigg)\bigg\}^{1/3}, \, 
u_2 = \bigg\{p\bigg(1 + \frac{\sqrt{3}}{2\cos\tfrac{2\pi}{9}}\bigg)\bigg\}^{1/3},\,\, \text{and} \,\,
u_4 = \bigg\{p\bigg(1 - \frac{\sqrt{3}}{2\cos\tfrac{\pi}{9}}\bigg)\bigg\}^{1/3}.
\end{equation}
We complete the proof by rearranging \eqref{eq:e27-rewrite}.
\end{proof}

Chapter 20 of Ramanujan's second notebook \cite[pp.~241--251]{RamanujanEarlierII}, \cite[pp.~325--453]{BerndtIII} contains modular equations of higher and composite degrees. In the first entry of this chapter \cite[p.~241]{RamanujanEarlierII}, among theta functions identities, there are two trigonometric identities. These are
\begin{equation}\label{eq:Ramanujan-first-trig}
\cos\bigg(\frac{2\pi}{9}\bigg) + \cos\bigg(\frac{4\pi}{9}\bigg) = \cos\bigg(\frac{\pi}{9}\bigg)
\end{equation}
and
\begin{equation}\label{eq:Ramanujan-second-trig}
\frac{1}{\cos\frac{4\pi}{9}} + \frac{1}{\cos\frac{2\pi}{9}} = \frac{1}{\cos\frac{\pi}{9}} + 6.
\end{equation}
Ramanujan gave them in degrees instead of radians. In Berndt's book \cite[pp.~345--349]{BerndtIII}, these are the third and forth identities of Entry~1(iii), and are proved by the identity \eqref{eq:power-reduction-cos}. Berndt mentions that this proof does not relate these identities to the rest of the material in Entry~1, but Ramanujan clearly had a connection in mind.

From the first proof of Theorem~\ref{thm:e27}, we see a connection with theta functions. By using Lemma~\ref{lemma:u_1,u_2,u_4-split}\eqref{split-iii} with $q = \exp(-\pi/3)$, with the values of $p$ and $u_3$ given in \eqref{eq:e27p} and \eqref{eq:e27u3}, respectively, and with the obtained values for $u_1, u_2,$ and $u_4$ in \eqref{eq:e27-u124}, we find \eqref{eq:Ramanujan-second-trig}. Similarly, by trigonometric transformations, we find that \eqref{eq:e27-u124} can be written in the form
\begin{align*}
u_1 &= \bigg\{p\bigg(1 + \sqrt{3} + 2\sqrt{3}\cos\bigg(\frac{\pi}{9}\bigg)\bigg)\bigg\}^{1/3},\\
u_2 &= \bigg\{p\bigg(1+ \sqrt{3} - 2\sqrt{3}\cos\bigg(\frac{4\pi}{9}\bigg)\bigg)\bigg\}^{1/3},\\
u_4 &= \bigg\{p\bigg(1 + \sqrt{3} - 2\sqrt{3}\cos\bigg(\frac{2\pi}{9}\bigg)\bigg)\bigg\}^{1/3}.
\end{align*}
Therefore, using these forms, we can deduce \eqref{eq:Ramanujan-first-trig} in the same way.

\medskip

Our second proof relies on the entry of Ramanujan from his second notebook, discussed in Section~\ref{section:cubic}. In the following, the notations $\alpha, \beta,\gamma,$ and $r$ have different meanings than those introduced in Theorem~\ref{thm:ninth} and used elsewhere.

\begin{proof}[Second Proof of Theorem~\ref{thm:e27}] By Lemma~\ref{lemma:9n}, for $n = 9$, we find that
\begin{equation}\label{eq:phi27}
\frac{\varphi(e^{-27\pi})}{\varphi(e^{-3\pi})} =  \frac{1}{3}\bigg(1 + \frac{\sqrt{2}G_{81}}{G_9^3}\bigg),
\end{equation}
with the value $G_9$ from \eqref{eq:G9}, and by Lemma~\ref{lemma:G81n}, for $n = 1$, with $G_1 = 1$ \cite[p.~189]{BerndtV},
\begin{equation}\label{eq:G81-expr}
G_{81}^3 = G_9 \frac{\sqrt{2}G_9 + 1}{\sqrt{2} - G_9}.
\end{equation}
Note by \eqref{eq:G9} that
\begin{equation}\label{eq:G9-notes}
(16(11\sqrt{3} - 19))^{1/9} = \sqrt{2}G_9^{-5/3} \text{\qquad and \qquad} \sqrt{3} - 1 = \sqrt{2}G_9^{-3}.
\end{equation}
Let $\alpha, \beta,$ and $\gamma$ denote the expressions under the cube roots in the statement, i.e.,
\begin{equation}\label{eq:e27-alpha,beta,gamma}
\alpha := 1 + \frac{\sqrt{3}}{2\cos\frac{2\pi}{9}}, \quad
\beta := 1 + \frac{\sqrt{3}}{2\cos\frac{4\pi}{9}}, \quad \text{and} \quad
\gamma := 1 - \frac{\sqrt{3}}{2\cos\frac{\pi}{9}}.
\end{equation}
Then, by using \eqref{eq:phi27}, \eqref{eq:G9-notes}, and \eqref{eq:e27-alpha,beta,gamma}, we can restate the theorem as
\begin{equation}\label{eq:alpha-beta-gamma-expr1}
\frac{\sqrt{3} G_{81}}{G_9^{4/3}} = \alpha^{1/3} + \beta^{1/3} + \gamma^{1/3}.
\end{equation}

By setting $\theta = 2\pi/9, 4\pi/9,$ and $8\pi/9$ in the power-reduction formula \eqref{eq:power-reduction-cos}, we find that $2\cos(2\pi/9), 2\cos(4\pi/9),$ and $2\cos(8\pi/9) = -2\cos(\pi/9)$ are the roots of
\begin{equation*}
r(x) := x^3 - 3x + 1.
\end{equation*}
Thus, by a proper polynomial transformation, $\alpha, \beta,$ and $\gamma$ are the roots of the polynomial
\begin{equation}\label{eq:e27-r-tilde}
\widetilde{r}(x) := (x-1)^3 r\bigg(\frac{\sqrt{3}}{x - 1}\bigg) = x^3 - 3(1 + \sqrt{3})x^2 + 3(1 + 2\sqrt{3})x - 1.
\end{equation}

By Entry~\ref{entry:Ramanujan356}, we know that if $\alpha, \beta,$ and $\gamma$ are the roots of
\begin{equation*}
x^3 - ax^2 + bx - 1 = 0,
\end{equation*}
then
\begin{equation*}
\alpha^{1/3} + \beta^{1/3} + \gamma^{1/3} = \big(a + 6 + 3\{A + B\}^{1/3} + 3\{A - B\}^{1/3}\big)^{1/3},
\end{equation*}
with
\begin{equation*}
A = \frac{ab + 9}{2} + 3a + 3b \text{\qquad and \qquad} B = \bigg(\frac{(ab + 9)^2}{4} - a^3 - b^3 - 27\bigg)^{1/2}.
\end{equation*}
We apply this theorem with $a = 3(1 + \sqrt{3})$ and $b = 3(1 + 2\sqrt{3})$. Since
\begin{equation*}
a + 6 = 3\sqrt{3}(1 + \sqrt{3}), \quad A + B = 54(1 + \sqrt{3}), \text{\quad and \quad} A - B = \tfrac{27}{2}(1 + \sqrt{3})^2,
\end{equation*}
and knowing that $\alpha, \beta,$ and $\gamma$ in \eqref{eq:e27-alpha,beta,gamma} are the roots of $\widetilde{r}$ in \eqref{eq:e27-r-tilde}, with \eqref{eq:G9} in mind, we find that
\begin{align}
\alpha^{1/3} + \beta^{1/3} + \gamma^{1/3} &= \bigg(3\sqrt{3}(1 + \sqrt{3}) + 9\sqrt[3]{2}(1 + \sqrt{3})^{1/3} + \frac{9}{\sqrt[3]{2}}(1 + \sqrt{3})^{2/3}\bigg)^{1/3}\notag\\
&= (3\sqrt{6}G_9^3 + 9\sqrt{2}G_9 + 9G_9^2)^{1/3}\label{eq:alpha-beta-gamma-expr2}.
\end{align}

Now, combining \eqref{eq:alpha-beta-gamma-expr1} and \eqref{eq:alpha-beta-gamma-expr2}, we find that we are required to prove that
\begin{equation}\label{eq:alpha-beta-gamma-comb}
\frac{3\sqrt{3} G_{81}^3}{G_9^{4}} = 3\sqrt{6}G_9^3 + 9\sqrt{2}G_9 + 9G_9^2.
\end{equation}
Substituting \eqref{eq:G81-expr} into \eqref{eq:alpha-beta-gamma-comb} yields
\begin{equation*}
\frac{\sqrt{2}G_9 + 1}{\sqrt{2} - G_9} = \sqrt{2}G_9^6 + \sqrt{6}G_9^4 + \sqrt{3}G_9^5.
\end{equation*}
After rearrangement, we may recast \eqref{eq:alpha-beta-gamma-comb} in the form
\begin{equation}\label{eq:e27-G9-eq}
\sqrt{2}G_9^7 - (2 - \sqrt{3})G_9^6 - 2\sqrt{3}G_9^4 + \sqrt{2}G_9 + 1 = 0.
\end{equation}
By factoring \eqref{eq:e27-G9-eq}, we see that it remains to show that
\begin{equation}\label{eq:e27-G9-factorization}
\frac{1}{4\sqrt{2}}\big(2G_9^3 - \sqrt{2}G_9^2 + (\sqrt{3} - 1)G_9 - \sqrt{2}\big)\big(2G_9^3 - \sqrt{6} - \sqrt{2}\big)\big(2G_9 + \sqrt{6} - \sqrt{2}\big) = 0.
\end{equation}
From \eqref{eq:G9}, we know that $2G_9^3 - \sqrt{6} - \sqrt{2} = 0$, and thus \eqref{eq:e27-G9-factorization} holds to complete the proof.
\end{proof}

\begin{theorem}\label{thm:e9sqrt3} We have
\begin{multline*}
\frac{\varphi(e^{-9\pi\sqrt{3}})}{\varphi(e^{-3\pi\sqrt{3}})} = 3^{-5/4}\\
\times \Bigg(
1 + 
\bigg(p\frac{1 - a\sin\frac{\pi}{9}}{1 - a\sin\frac{2\pi}{9}}\bigg)^{1/3} + 
\bigg(p\frac{1 + a\sin\frac{4\pi}{9}}{1 - a\sin\frac{\pi}{9}}\bigg)^{1/3} + (2^{2/3} - 2^{1/3}) + 
\bigg(p\frac{1 - a\sin\frac{2\pi}{9}}{1 + a\sin\frac{4\pi}{9}}\bigg)^{1/3}
\Bigg),
\end{multline*}
where
\begin{align*}
p &= 2(2^{1/3} - 1)^2,\\
a &= 2(2^{2/3} - 1)^{1/2}.
\end{align*}
\end{theorem}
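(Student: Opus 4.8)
The plan is to follow the First Proof of Theorem~\ref{thm:e27} line for line, now with $q = \exp(-\pi/\sqrt{3})$, i.e. the case $n = 1/3$ of Table~\ref{table:ninth}. Here $q^9 = e^{-3\pi\sqrt{3}}$ and $q^{1/9} = e^{-\pi/\sqrt{243}}$, so applying Lemma~\ref{lemma:transform} with $n = 243 = 3^5$ gives $\varphi(q^{1/9}) = 3^{5/4}\varphi(e^{-9\pi\sqrt{3}})$, and Theorem~\ref{thm:ninth}\eqref{9:phi-1-9} rewrites as
\begin{equation*}
\frac{\varphi(e^{-9\pi\sqrt{3}})}{\varphi(e^{-3\pi\sqrt{3}})} = 3^{-5/4}\bigl(1 + u_1 + u_2 + u_3 + u_4\bigr),
\end{equation*}
which already accounts for the prefactor $3^{-5/4}$. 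It thus remains to evaluate $u_1, u_2, u_3, u_4$ at this $q$.

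First I would compute $p$ and $u_3$ from Lemma~\ref{lemma:p-u_3-G}, substituting the class invariants $G_{1/3} = G_3 = 2^{1/12}$ (Lemma~\ref{lemma:G}) and $G_{27}$ from Ramanujan's table \cite[pp.~189--199]{BerndtV}. Since $G_{1/3} = G_3$, the factor $G_3$ cancels in Lemma~\ref{lemma:p-u_3-G}\eqref{p-G}, yielding $p = 2\sqrt{2}/G_{27}^6 = 2(2^{1/3} - 1)^2$, while Lemma~\ref{lemma:p-u_3-G}\eqref{u_3-G} gives $u_3 = \sqrt{2}G_3/G_{27}^3 = 2^{2/3} - 2^{1/3}$; these confirm the values of $p$ and $u_3$ (and hence of $a$) recorded in the statement. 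By Lemma~\ref{lemma:u_3}\eqref{u_3-i} we have $\varphi^2(q)/\varphi^2(q^9) = (u_3 + 1)^2$, so the cubic $r$ of Theorem~\ref{thm:ninth}\eqref{9:r} becomes the explicit polynomial
\begin{equation*}
r(\xi) = \xi^3 - u_3(u_3^2 + 2u_3 + 4)\xi^2 + 2u_3^3(u_3^2 + 2u_3 + 4)\xi - p^3
\end{equation*}
with the numbers just found.

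The core of the argument, as in \eqref{eq:e27r-transform}--\eqref{eq:trig-transform3}, is to solve this cubic in trigonometric form. I would apply a linear substitution $\xi = \lambda(1 - a\,s)$ reducing $r(\xi) = 0$ to a constant multiple of a depressed cubic; I expect the reduction to produce $4s^3 - 3s + \tfrac{\sqrt{3}}{2} = 0$, whose three roots, via the sine analogue of \eqref{eq:power-reduction-cos} with $\sin 3\theta = \sqrt{3}/2$, are $s \in \{\sin\tfrac{\pi}{9},\, \sin\tfrac{2\pi}{9},\, -\sin\tfrac{4\pi}{9}\}$. This gives the three roots of $r$ as $\alpha = \lambda(1 - a\sin\tfrac{2\pi}{9})$, $\beta = \lambda(1 - a\sin\tfrac{\pi}{9})$, and $\gamma = \lambda(1 + a\sin\tfrac{4\pi}{9})$, from which the ratios
\begin{equation*}
\frac{\beta}{\alpha} = \frac{1 - a\sin\frac{\pi}{9}}{1 - a\sin\frac{2\pi}{9}}, \qquad \frac{\gamma}{\beta} = \frac{1 + a\sin\frac{4\pi}{9}}{1 - a\sin\frac{\pi}{9}}, \qquad \frac{\alpha}{\gamma} = \frac{1 - a\sin\frac{2\pi}{9}}{1 + a\sin\frac{4\pi}{9}}
\end{equation*}
follow at once. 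The delicate part will be exactly this trigonometric reduction: pinning down the substitution constants $\lambda$ and $a = 2(2^{2/3} - 1)^{1/2}$ and verifying that the depressed cubic has the asserted triple-angle form, mirroring the angle-sum and product-to-sum juggling of \eqref{eq:trig-transform}--\eqref{eq:trig-transform3}.

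Finally, I would confirm the ordering of $(\alpha, \beta, \gamma)$ through Lemma~\ref{lemma:9:a,b,c-order}: condition~\eqref{9:a,b,c-order-cond1} is immediate from the signs of $\alpha - \beta$, $\beta - \gamma$, $\gamma - \alpha$ given by the explicit forms together with $0 < \sin\tfrac{\pi}{9} < \sin\tfrac{2\pi}{9} < \sin\tfrac{4\pi}{9}$, while condition~\eqref{9:a,b,c-order-cond2} reduces to the positivity of the three factors $1 - a\sin\tfrac{2\pi}{9}$, $1 - a\sin\tfrac{\pi}{9}$, $1 + a\sin\tfrac{4\pi}{9}$ together with the single inequality $(1 - a\sin\tfrac{\pi}{9})^2 > (1 - a\sin\tfrac{2\pi}{9})(1 + a\sin\tfrac{4\pi}{9})$, checked directly; positivity of $p$ and of the roots comes from Lemma~\ref{lemma:u_1,u_2,u_3,u_4,p-supp}. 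Then Theorem~\ref{thm:ninth}\eqref{9:u124} gives $u_1 = (p\beta/\alpha)^{1/3}$, $u_2 = (p\gamma/\beta)^{1/3}$, $u_4 = (p\alpha/\gamma)^{1/3}$, and inserting these together with $u_3 = 2^{2/3} - 2^{1/3}$ into the rewritten \eqref{9:phi-1-9} completes the proof.
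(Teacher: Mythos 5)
Your proposal is correct and follows essentially the same route as the paper's proof: the same rewrite via Lemma~\ref{lemma:transform} with $n=243$, the same class invariants $G_3$ and $G_{27}$ giving $p=2(2^{1/3}-1)^2$ and $u_3=2^{2/3}-2^{1/3}$, and the same trigonometric solution of the cubic via the substitution $\xi=(2p)^{1/2}(1-a\xi')$ reducing to $4\xi'^3-3\xi'+\tfrac{\sqrt{3}}{2}=0$ with roots $\sin\tfrac{\pi}{9}$, $\sin\tfrac{2\pi}{9}$, $-\sin\tfrac{4\pi}{9}$. The only deviation is that you verify the ordering conditions of Lemma~\ref{lemma:9:a,b,c-order} by hand (sign analysis for condition~\eqref{9:a,b,c-order-cond1}, positivity plus the single inequality $\beta^2>\alpha\gamma$ for condition~\eqref{9:a,b,c-order-cond2}), which is a sound and slightly more self-contained alternative to the paper's \emph{Mathematica} verification.
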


\begin{proof}
We apply Theorem~\ref{thm:ninth} with $q = \exp(-\pi/\sqrt{3})$. By using Lemma~\ref{lemma:transform} with $n = (9\sqrt{3})^2$, we rewrite Theorem~\ref{thm:ninth}\eqref{9:phi-1-9} as
\begin{equation}\label{eq:e9sqrt3-rewrite}
\frac{\varphi(e^{-9\pi\sqrt{3}})}{\varphi(e^{-3\pi\sqrt{3}})} = 3^{-5/4}(1 + u_1 + u_2 + u_3 + u_4).
\end{equation}
From~\cite[pp.~189--190]{BerndtV}, by Lemma~\ref{lemma:G},
\begin{equation}\label{eq:G3G27}
G_{1/3} = G_3 = 2^{1/12} \qquad \text{and} \qquad G_{27} = 2^{1/12}(2^{1/3} - 1)^{-1/3}.
\end{equation}
By Lemma~\ref{lemma:p-u_3-G}\eqref{p-G},\eqref{u_3-G} with $n = 1/3$, for Theorem~\ref{thm:ninth}\eqref{9:p},\eqref{9:u3}, we find
\begin{equation}\label{eq:e9sqrt3-p-u3}
p = \frac{2\sqrt{2}}{G_{27}^6} = 2(2^{1/3} - 1)^2 \qquad \text{and} \qquad u_3 = \frac{\sqrt{2} G_3}{G_{27}^3} = 2^{1/3}(2^{1/3} - 1).
\end{equation}
We have to determine the zeros of $r$ in Theorem~\ref{thm:ninth}\eqref{9:r}, i.e., the zeros of
\begin{equation}\label{eq:e9sqrt3r}
r(\xi) = \xi^3 - 6(2^{1/3} - 1)\xi^2 + 12(2^{1/3} - 1)^2(2 - 2^{2/3})\xi - 8(2^{1/3} - 1)^6 = 0.
\end{equation}

By a polynomial transformation, we find that
\begin{equation}\label{eq:e9sqrt3r-transform}
-\frac{\sqrt{2}}{a^3 p^{3/2}}r((2p)^{1/2}(1 - a\xi)) = 4\xi^3 - 3\xi + \frac{\sqrt{3}}{2}.
\end{equation}
Set $\theta = \pi/9, 2\pi/9,$ and $-4\pi/9$ in the power-reduction trigonometric identity \cite[p.~32]{GradshteynRyzhik7}
\begin{equation}\label{eq:power-reduction-sin}
4\sin^3 \theta = 3\sin \theta - \sin 3\theta.
\end{equation}
By \eqref{eq:e9sqrt3r}--\eqref{eq:power-reduction-sin}, with the values of $\sin(3 \theta) = \sqrt{3}/2$, since sine is odd, the roots of $r$ are
\begin{equation*}
\alpha = (2p)^{1/2}\bigg(1 - a\sin\bigg(\frac{2\pi}{9}\bigg)\bigg), \,\,\,
\beta = (2p)^{1/2}\bigg(1 - a\sin\bigg(\frac{\pi}{9}\bigg)\bigg), \,\,\,
\gamma = (2p)^{1/2}\bigg(1 + a\sin\bigg(\frac{4\pi}{9}\bigg)\bigg).
\end{equation*}

\emph{Mathematica} confirms that both conditions in Lemma~\ref{lemma:9:a,b,c-order}\eqref{9:a,b,c-order-cond1},\eqref{9:a,b,c-order-cond2} hold. This can be done directly, or by excluding by numerical evaluations all possible orders of $\alpha, \beta,$ and $\gamma$, which do not meet the conditions. Thus $(\alpha, \beta, \gamma)$ is the correct order of the roots. With \eqref{eq:e9sqrt3-rewrite} in mind, we recall that $p$ and $u_3$ are given in \eqref{eq:e9sqrt3-p-u3}, and by Theorem~\ref{thm:ninth}\eqref{9:u124}, we find that
\begin{equation*}
u_1 = \bigg(p\frac{1 - a\sin\frac{\pi}{9}}{1 - a\sin\frac{2\pi}{9}}\bigg)^{1/3},\quad
u_2 = \bigg(p\frac{1 + a\sin\frac{4\pi}{9}}{1 - a\sin\frac{\pi}{9}}\bigg)^{1/3},\quad \text{and} \quad
u_4 = \bigg(p\frac{1 - a\sin\frac{2\pi}{9}}{1 + a\sin\frac{4\pi}{9}}\bigg)^{1/3},
\end{equation*}
which completes the proof.
\end{proof}

\begin{theorem}\label{thm:e27sqrt3} We have
\begin{equation*}
\frac{\varphi(e^{-27\pi\sqrt{3}})}{\varphi(e^{-\pi\sqrt{3}})} = 3^{-7/4}
\Bigg(
1 + 
\bigg(p\frac{1 + a\cos\frac{4\pi}{9}}{1 - a\cos\frac{\pi}{9}}\bigg)^{1/3} + 
\bigg(p\frac{1 + a\cos\frac{2\pi}{9}}{1 + a\cos\frac{4\pi}{9}}\bigg)^{1/3} + 2^{1/3} + 
\bigg(p\frac{1 - a\cos\frac{\pi}{9}}{1 + a\cos\frac{2\pi}{9}}\bigg)^{1/3}
\Bigg),
\end{equation*}
where
\begin{align*}
p &= 2(1 + 2^{1/3} + 2^{2/3})^{1/3},\\
a &= 16p^{-3} = 2(2^{1/3} - 1).
\end{align*}
\end{theorem}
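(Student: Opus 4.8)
The plan is to mimic the first proof of Theorem~\ref{thm:e27} and the proof of Theorem~\ref{thm:e9sqrt3}, applying Theorem~\ref{thm:ninth} to $q = \exp(-\pi\sqrt{3}/9) = \exp(-\pi/(3\sqrt{3}))$, so that $q^9 = e^{-\pi\sqrt{3}}$ and $q^{1/9} = e^{-\pi\sqrt{3}/81}$. First I would apply Lemma~\ref{lemma:transform} with $n = 2187 = 3^7$, giving $\varphi(e^{-\pi\sqrt{3}/81}) = 3^{7/4}\varphi(e^{-27\pi\sqrt{3}})$, and thereby rewrite Theorem~\ref{thm:ninth}\eqref{9:phi-1-9} as
\begin{equation*}
\frac{\varphi(e^{-27\pi\sqrt{3}})}{\varphi(e^{-\pi\sqrt{3}})} = 3^{-7/4}\bigl(1 + u_1 + u_2 + u_3 + u_4\bigr),
\end{equation*}
which produces the prefactor $3^{-7/4}$ of the statement.

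Next I would read off the relevant class invariants from Table~\ref{table:ninth}: here $n = 1/27$, $9n = 1/3$, and $81n = 3$. By Lemma~\ref{lemma:G} we have $G_{1/27} = G_{27}$ and $G_{1/3} = G_3$, and the values $G_3 = 2^{1/12}$, $G_{27} = 2^{1/12}(2^{1/3}-1)^{-1/3}$ are already recorded in \eqref{eq:G3G27}. Substituting these into Lemma~\ref{lemma:p-u_3-G}\eqref{p-G},\eqref{u_3-G} yields $u_3 = \sqrt{2}\,G_{1/3}/G_3^3 = 2^{1/3}$, which is exactly the middle summand in the statement, and $p = 2\sqrt{2}\,G_{1/27}/(G_{1/3}G_3^6) = 2(2^{1/3}-1)^{-1/3}$. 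Invoking the reciprocal identity $(2^{1/3}-1)(1 + 2^{1/3} + 2^{2/3}) = 1$ turns this into $p = 2(1 + 2^{1/3} + 2^{2/3})^{1/3}$, and the same identity identifies $16p^{-3}$ with $2(2^{1/3}-1)$, matching the stated $a$.

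With $u_3$ and $p$ explicit, the cubic $r$ of Theorem~\ref{thm:ninth}\eqref{9:r} is determined, and the heart of the argument is to solve it trigonometrically as in \eqref{eq:e27r-transform} and \eqref{eq:e9sqrt3r-transform}. Writing $A := u_3(u_3^2 + 2u_3 + 4) = 2 + 4\cdot 2^{1/3} + 2\cdot 2^{2/3}$ for the sum of the roots, I would exhibit a polynomial transformation of the form
\begin{equation*}
\lambda\, r\Bigl(\tfrac{A}{3}\bigl(1 - a\xi\bigr)\Bigr) = 4\xi^3 - 3\xi - \tfrac12,
\end{equation*}
for a suitable nonzero constant $\lambda$, depressing and rescaling $r$ to Chebyshev normal form. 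Then the power-reduction identity \eqref{eq:power-reduction-cos} with $\cos 3\theta = \tfrac12$, i.e.\ $\theta = \pi/9,\, 5\pi/9,\, 7\pi/9$, gives the three roots, and rewriting $\cos\frac{5\pi}{9} = -\cos\frac{4\pi}{9}$ and $\cos\frac{7\pi}{9} = -\cos\frac{2\pi}{9}$ produces
\begin{equation*}
\alpha = \tfrac{A}{3}\Bigl(1 - a\cos\tfrac{\pi}{9}\Bigr), \quad \beta = \tfrac{A}{3}\Bigl(1 + a\cos\tfrac{4\pi}{9}\Bigr), \quad \gamma = \tfrac{A}{3}\Bigl(1 + a\cos\tfrac{2\pi}{9}\Bigr).
\end{equation*}
The common factor $A/3$ cancels in the ratios $\beta/\alpha$, $\gamma/\beta$, $\alpha/\gamma$, so it never appears in the final answer.

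Finally I would fix the order of the roots via Lemma~\ref{lemma:9:a,b,c-order}. Since $a > 0$ and $\cos\frac{\pi}{9} > \cos\frac{2\pi}{9} > \cos\frac{4\pi}{9} > 0$, one checks directly that $\alpha < \beta < \gamma$, whence $(\alpha - \beta)(\beta - \gamma)(\gamma - \alpha) > 0$ and $\beta/\alpha > \gamma/\beta > \alpha/\gamma$; alternatively these can be verified numerically, as in the proof of Theorem~\ref{thm:e9sqrt3}. Substituting the three ratios into Theorem~\ref{thm:ninth}\eqref{9:u124} gives $u_1, u_2, u_4$ as the stated cube roots, and plugging back into the rewritten identity completes the proof. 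I expect the main obstacle to be the polynomial transformation of the third step: confirming that the depressed, rescaled cubic is precisely $4\xi^3 - 3\xi - \tfrac12$ demands careful algebra with the nested surds $2^{1/3}$ and $2^{2/3}$, though it runs parallel to the two preceding cases and is controlled by the reciprocal identity for $2^{1/3}-1$.
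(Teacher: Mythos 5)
Your proposal is correct and takes essentially the same route as the paper's own proof: the same specialization $q = \exp(-\pi/\sqrt{27})$, the same rewriting via Lemma~\ref{lemma:transform} with $n = 3^7$, the same invariants $G_3$, $G_{27}$ fed through Lemmas~\ref{lemma:G} and~\ref{lemma:p-u_3-G} to get $p = 2(2^{1/3}-1)^{-1/3}$ and $u_3 = 2^{1/3}$, the same cubic, and the same trigonometric solution — your normal form $4\xi^3 - 3\xi - \tfrac12$ with $\cos 3\theta = \tfrac12$ is just the mirror image $\xi \mapsto -\xi$ of the paper's $4\xi^3 - 3\xi + \tfrac12$ with $\cos 3\theta = -\tfrac12$ (the paper uses \eqref{eq:e27sqrt3r-transform} with $\theta = 2\pi/9, 4\pi/9, 8\pi/9$), and it yields the identical root set and the identical assignment $(\alpha,\beta,\gamma)$.

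One caution on the root ordering: from $0 < \alpha < \beta < \gamma$ you correctly get condition~\eqref{9:a,b,c-order-cond1} of Lemma~\ref{lemma:9:a,b,c-order} and also $\gamma/\beta > \alpha/\gamma$ (since $\gamma^2 > \alpha\beta$), but the remaining inequality $\beta/\alpha > \gamma/\beta$, i.e.\ $\beta^2 > \alpha\gamma$, is \emph{not} a formal consequence of the ordering alone, so the word ``whence'' overreaches there; it must be checked with the actual values (here $\beta^2/b^2 \approx 1.19$ against $\alpha\gamma/b^2 \approx 0.72$, with $b = \tfrac{2}{3}(1+2^{1/3})^2$, so it does hold). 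Your stated fallback of numerical verification closes this gap, and it is precisely what the paper does: it confirms both conditions of Lemma~\ref{lemma:9:a,b,c-order} with \emph{Mathematica} rather than by hand.
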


\begin{proof}
We use Theorem~\ref{thm:ninth} with $q = \exp(-\pi/\sqrt{27})$. By applying Lemma~\ref{lemma:transform} for $n =(27\sqrt{3})^2$,  we transform Theorem~\ref{thm:ninth}\eqref{9:phi-1-9} into
\begin{equation}\label{eq:e27sqrt3-rewrite}
\frac{\varphi(e^{-27\pi\sqrt{3}})}{\varphi(e^{-\pi\sqrt{3}})} = 3^{-7/4}(1 + u_1 + u_2 + u_3 + u_4).
\end{equation}
Utilize the values of $G_3$ and $G_{27}$ in \eqref{eq:G3G27}, with Lemma~\ref{lemma:G}. By Lemma~\ref{lemma:p-u_3-G}\eqref{p-G},\eqref{u_3-G} with $n = 1/27$, for Theorem~\ref{thm:ninth}\eqref{9:p},\eqref{9:u3}, we obtain
\begin{equation}\label{eq:e27sqrt3-p-u3}
p = \frac{2\sqrt{2} G_{27}}{G_3^7} = \frac{2}{(2^{1/3} - 1)^{1/3}} = 2(1 + 2^{1/3} + 2^{2/3})^{1/3} \qquad \text{and} \qquad u_3 = \frac{\sqrt{2}}{G_3^2} = 2^{1/3}.
\end{equation}
We have to determine the roots of $r$ in Theorem~\ref{thm:ninth}\eqref{9:r}, i.e., the roots of
\begin{equation}\label{eq:e27sqrt3r}
r(\xi) = \xi^3 - 2(1 + 2^{1/3})^2\xi^2 + 4(1 + 2^{1/3})(2 + 2^{2/3})\xi - 8(1 + 2^{1/3} + 2^{2/3}) = 0.
\end{equation}

By a polynomial transformation, we arrive at
\begin{equation}\label{eq:e27sqrt3r-transform}
\frac{3}{8a}r\bigg(\frac{2}{3}(1 + 2^{1/3})^2(1 + a \xi)\bigg) = 4\xi^3 - 3\xi + \frac{1}{2}.
\end{equation}
By \eqref{eq:e27sqrt3r}, \eqref{eq:e27sqrt3r-transform}, and by setting $\theta = 2\pi/9, 4\pi/9,$ and $8\pi/9$ in the power-reduction identity \eqref{eq:power-reduction-cos}, with the values of $\cos(3 \theta) = -1/2$, we find that the roots of $r$ are
\begin{equation*}
\alpha = b\bigg(1 - a\cos\bigg(\frac{\pi}{9}\bigg)\bigg), \quad
\beta = b\bigg(1 + a\cos\bigg(\frac{4\pi}{9}\bigg)\bigg), \quad
\gamma = b\bigg(1 + a\cos\bigg(\frac{2\pi}{9}\bigg)\bigg),
\end{equation*}
where $b = 2(1 + 2^{1/3})^2/3$, and we used the fact that $\cos(8\pi/9) = -\cos(\pi/9)$.

As in the end of the proof of Theorem~\ref{thm:e9sqrt3}, we verify with \emph{Mathematica} that the conditions in Lemma~\ref{lemma:9:a,b,c-order}\eqref{9:a,b,c-order-cond1},\eqref{9:a,b,c-order-cond2} hold, thus $(\alpha, \beta, \gamma)$ is the correct order of the roots. In view of \eqref{eq:e27sqrt3-rewrite}, we recall that $p$ and $u_3$ are given in \eqref{eq:e27sqrt3-p-u3}, and by Theorem~\ref{thm:ninth}\eqref{9:u124}, we conclude that
\begin{equation*}
u_1 = \bigg(p\frac{1 + a\cos\frac{4\pi}{9}}{1 - a\cos\frac{\pi}{9}}\bigg)^{1/3},\quad
u_2 = \bigg(p\frac{1 + a\cos\frac{2\pi}{9}}{1 + a\cos\frac{4\pi}{9}}\bigg)^{1/3},\quad \text{and} \quad
u_4 = \bigg(p\frac{1 - a\cos\frac{\pi}{9}}{1 + a\cos\frac{2\pi}{9}}\bigg)^{1/3}.\tag*{\qedhere}
\end{equation*}
\end{proof}

By combining the results of Theorems~\ref{thm:e9sqrt3} and~\ref{thm:e27sqrt3} with Lemma~\ref{lemma:u_1,u_2,u_4-split}\eqref{split-iii}, after simplification, we obtain the trigonometric identities
\begin{align*}
\frac{1 - a\sin\frac{\pi}{9}}{1 - a\sin\frac{2\pi}{9}} + \frac{1 + a\sin\frac{4\pi}{9}}{1 - a\sin\frac{\pi}{9}} + \frac{1 - a\sin\frac{2\pi}{9}}{1 + a\sin\frac{4\pi}{9}} = (1 + 2^{1/3})^3 (2 + 2^{1/3})&, \qquad a = 2(2^{2/3} - 1)^{1/2},\\
\intertext{and}
\frac{1 + a\cos\frac{4\pi}{9}}{1 - a\cos\frac{\pi}{9}} + \frac{1 + a\cos\frac{2\pi}{9}}{1 + a\cos\frac{4\pi}{9}} + \frac{1 - a\cos\frac{\pi}{9}}{1 + a\cos\frac{2\pi}{9}} = 3 \cdot 2^{1/3}&, \qquad a = 2(2^{1/3} - 1).
\end{align*}
On the one hand, these identities are analogous to Ramanujan's identities \eqref{eq:Ramanujan-second-trig} and \eqref{eq:Ramanujan-first-trig}, and on the other hand, to an identity for the septic case given in \cite[Lemma~3.8]{Rebak}, \cite[Example~3.3]{BerndtKimZaharescu}. 

Another family of related identities can be obtained by Corollary~\ref{cor:u_1,u_2,u_4-reciprocal}. With $q = \exp(-\pi/3)$, with the values of $p$ and $u_3$ from \eqref{eq:e27p} and \eqref{eq:e27u3}, respectively, and with the values of $u_1, u_2,$ and $u_4$ in \eqref{eq:e27-u124}, after rearrangement, we find that
\begin{equation*}
\frac{1}{2\cos(\frac{2\pi}{9}) + \sqrt{3}} + \frac{1}{2\cos(\frac{4\pi}{9}) + \sqrt{3}} = \frac{1}{2\cos(\frac{\pi}{9}) - \sqrt{3}} - 6,
\end{equation*}
which is similar to Ramanujan's identity \eqref{eq:Ramanujan-second-trig}. By combining the findings of Theorems~\ref{thm:e9sqrt3} and~\ref{thm:e27sqrt3} with Corollary~\ref{cor:u_1,u_2,u_4-reciprocal}, and simplifying, we arrive at
\begin{align*}
\frac{1 - a\sin\frac{2\pi}{9}}{1 - a\sin\frac{\pi}{9}} + \frac{1 - a\sin\frac{\pi}{9}}{1 + a\sin\frac{4\pi}{9}} + \frac{1 + a\sin\frac{4\pi}{9}}{1 - a\sin\frac{2\pi}{9}} = \frac{3}{(2^{1/3} - 1)^3}&, \qquad a = 2(2^{2/3} - 1)^{1/2},\\
\intertext{and}
\frac{1 - a\cos\frac{\pi}{9}}{1 + a\cos\frac{4\pi}{9}} + \frac{1 + a\cos\frac{4\pi}{9}}{1 + a\cos\frac{2\pi}{9}} + \frac{1 + a\cos\frac{2\pi}{9}}{1 - a\cos\frac{\pi}{9}} = \frac{9}{2^{1/3} + 1}&, \qquad a = 2(2^{1/3} - 1).
\end{align*}

Lastly, a third family of trigonometric identities can be found by comparing the values of Theorems~\ref{thm:e27}, \ref{thm:e9sqrt3}, and \ref{thm:e27sqrt3} with the values in Theorem~\ref{thm:alternative}\eqref{eq:e27-alt},\eqref{eq:e9sqrt3-alt}, and \eqref{eq:e27sqrt3-alt}, respectively. After simplification, we obtain the identities
\begin{multline*}
\bigg(1 + \frac{\sqrt{3}}{2\cos\frac{4\pi}{9}}\bigg)^{1/3} + 
\bigg(1 + \frac{\sqrt{3}}{2\cos\frac{2\pi}{9}}\bigg)^{1/3} + 
\bigg(1 - \frac{\sqrt{3}}{2\cos\frac{\pi}{9}}\bigg)^{1/3}\\
 =\sqrt{3}(7 - 4\sqrt{3})^{1/9}\left(\frac{(2(\sqrt{3} + 1))^{1/3} + 1}{(2(\sqrt{3} - 1))^{1/3} - 1}\right)^{1/3},
\end{multline*}
\begin{multline*}
\bigg(\frac{1 - a\sin\frac{\pi}{9}}{1 - a\sin\frac{2\pi}{9}}\bigg)^{1/3} + 
\bigg(\frac{1 + a\sin\frac{4\pi}{9}}{1 - a\sin\frac{\pi}{9}}\bigg)^{1/3} + 
\bigg(\frac{1 - a\sin\frac{2\pi}{9}}{1 + a\sin\frac{4\pi}{9}}\bigg)^{1/3}
 = (2^{1/3} + 1)^2, \quad a = 2(2^{2/3} - 1)^{1/2},
\end{multline*}
and
\begin{multline*}
\bigg(\frac{1 + a\cos\frac{4\pi}{9}}{1 - a\cos\frac{\pi}{9}}\bigg)^{1/3} + 
\bigg(\frac{1 + a\cos\frac{2\pi}{9}}{1 + a\cos\frac{4\pi}{9}}\bigg)^{1/3} + 
\bigg(\frac{1 - a\cos\frac{\pi}{9}}{1 + a\cos\frac{2\pi}{9}}\bigg)^{1/3}\\
= \frac{3^{1/3}(2^{1/3} - 1)^{4/9}(2^{1/3} + (2^{1/3} - 1)^{1/3})}{(3 - 2 \cdot 3^{1/3})^{1/3}}, \quad a = 2(2^{1/3} - 1),
\end{multline*}
respectively. Further investigation of these identities may be of interest.

\medskip
The forms of our last two examples are more complicated.

\begin{theorem}\label{thm:e81} We have
\begin{equation}\label{eq:e81}
\frac{\varphi(e^{-81\pi})}{\varphi(e^{-\pi})} = \frac{1}{9}(1 + u_1 + u_2 + u_3 + u_4),
\end{equation}
where
\begin{equation*}
u_1 = \bigg(\frac{\beta p}{\alpha}\bigg)^{1/3}, \quad
u_2 = \bigg(\frac{\gamma p}{\beta}\bigg)^{1/3}, \quad
u_3 = (2(\sqrt{3} + 1))^{1/3}, \quad
u_4 = \bigg(\frac{\alpha p}{\gamma}\bigg)^{1/3},
\end{equation*}
with
\begin{align*}
\alpha &= \frac{2}{3}\bigg(a - b \cos\bigg(\frac{\pi}{9}\bigg) - c \cos \bigg(\frac{2\pi}{9}\bigg)\bigg),\\
\beta &= \frac{2}{3}\bigg(a + d \cos\bigg(\frac{\pi}{9}\bigg) - b \cos \bigg(\frac{2\pi}{9}\bigg)\bigg),\\
\gamma &= \frac{2}{3}\bigg(a - c \cos\bigg(\frac{\pi}{9}\bigg) + d \cos \bigg(\frac{2\pi}{9}\bigg)\bigg),
\end{align*}
and
\begin{align*}
p &= 2\left(2(\sqrt{3} -1)\left(\frac{(2(\sqrt{3} + 1))^{1/3} + 1}{(2(\sqrt{3} - 1))^{1/3} - 1}\right)\right)^{1/3},\\
a &= 2(2(\sqrt{3} + 1))^{1/3} + (2(\sqrt{3} + 1))^{2/3} + \sqrt{3} + 1,\\
b &= 2\big((2(\sqrt{3} + 1))^{1/3} - (2(\sqrt{3} - 1))^{1/3}\big),\\
c &= 2\big((2(\sqrt{3} - 1))^{1/3} - (2(\sqrt{3} + 1))^{1/3} - \sqrt{3} - 1\big),\\
d &= 2(\sqrt{3} - 1).
\end{align*}
\end{theorem}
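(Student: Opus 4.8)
The plan is to specialize Theorem~\ref{thm:ninth} to $q = \exp(-\pi/9)$, so that $q^9 = e^{-\pi}$ is the target denominator and $q^{1/9} = e^{-\pi/81}$. Applying Lemma~\ref{lemma:transform} with $n = 81^2 = 6561$ gives $\varphi(e^{-\pi/81}) = 9\,\varphi(e^{-81\pi})$, so that Theorem~\ref{thm:ninth}\eqref{9:phi-1-9} becomes precisely \eqref{eq:e81} with the prefactor $\tfrac19$, and it remains to identify $u_1, u_2, u_3, u_4$. I would next evaluate the three class invariants attached to $n = 1/81$: by Lemma~\ref{lemma:G} these are $G_{1/81} = G_{81}$, $G_{1/9} = G_9$, and $G_1 = 1$, where $G_9 = (2+\sqrt3)^{1/6}$ by \eqref{eq:G9} and $G_{81}^3 = G_9(\sqrt2 G_9 + 1)/(\sqrt2 - G_9)$ by Lemma~\ref{lemma:G81n} with $n = 1$. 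Feeding these into Lemma~\ref{lemma:p-u_3-G}\eqref{u_3-G},\eqref{p-G} yields $u_3 = \sqrt2\,G_9 = (2(\sqrt3+1))^{1/3}$, matching the stated value, and $p = 2\sqrt2\,G_{81}/G_9$, which after rationalizing the surds reduces to the nested-radical form in the statement; a short but fiddly radical computation confirms $p^3 = 16(\sqrt3-1)\big((2(\sqrt3+1))^{1/3}+1\big)/\big((2(\sqrt3-1))^{1/3}-1\big)$. With $u_3$ and $p$ in hand, the cubic $r$ of Theorem~\ref{thm:ninth}\eqref{9:r} is fully determined through its $u_3$-and-$p$ form $r(\xi) = \xi^3 - u_3(u_3^2+2u_3+4)\xi^2 + 2u_3^3(u_3^2+2u_3+4)\xi - p^3$.

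The heart of the proof, and the main obstacle, is solving this cubic. Here the situation is genuinely harder than in Theorems~\ref{thm:e27}--\ref{thm:e27sqrt3}: after depressing $r$ by the shift $\xi = \eta + \tfrac13 u_3(u_3^2+2u_3+4)$ and rescaling toward the Chebyshev normal form $4t^3 - 3t = \cos 3\phi$ via \eqref{eq:power-reduction-cos}, the resulting value of $\cos 3\phi$ is not one of the convenient constants ($\pm\tfrac12, \pm\tfrac{\sqrt3}{2}, \dots$) met before, so $\phi$ is not a rational multiple of $\pi$ and the roots do not collapse to a single cosine. Instead I expect each root to live in the real cubic cyclotomic field $\mathbb{Q}(\cos(2\pi/9))$ adjoined to $\mathbb{Q}(\sqrt3,(2(\sqrt3+1))^{1/3},(2(\sqrt3-1))^{1/3})$, and to be written in the two-element basis $\{\cos(\pi/9),\cos(2\pi/9)\}$ by means of Ramanujan's relation \eqref{eq:Ramanujan-first-trig} (equivalently, the vanishing of $\cos(2\pi/9)+\cos(4\pi/9)+\cos(8\pi/9)$ coming from the cubic $x^3 - 3x + 1$ used in the second proof of Theorem~\ref{thm:e27}). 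Concretely, I would produce the cyclic expressions for $\alpha, \beta, \gamma$ with the coefficients $a, b, c, d$ as stated, and then verify them directly: substituting each into $r$ and reducing modulo the minimal polynomial $8\cos^3(\pi/9) - 6\cos(\pi/9) - 1 = 0$ (together with $\cos(4\pi/9) = \cos(\pi/9) - \cos(2\pi/9)$) must return $0$, while the elementary symmetric functions must reproduce $u_3(u_3^2+2u_3+4)$, $2u_3^3(u_3^2+2u_3+4)$, and $p^3$. This bookkeeping in a degree-$\geq 12$ number field is where essentially all of the labor lies.

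Finally, I would pin down the correct ordering of $(\alpha, \beta, \gamma)$ using Lemma~\ref{lemma:9:a,b,c-order}: check condition \eqref{9:a,b,c-order-cond1}, $(\alpha-\beta)(\beta-\gamma)(\gamma-\alpha) > 0$, and condition \eqref{9:a,b,c-order-cond2}, $\beta/\alpha > \gamma/\beta > \alpha/\gamma$, either directly from the trigonometric expressions or---as in the proofs of Theorems~\ref{thm:e9sqrt3} and~\ref{thm:e27sqrt3}---by numerically excluding the other orderings. With the order fixed, Theorem~\ref{thm:ninth}\eqref{9:u124} gives $u_1 = (\beta p/\alpha)^{1/3}$, $u_2 = (\gamma p/\beta)^{1/3}$, and $u_4 = (\alpha p/\gamma)^{1/3}$, and substituting these together with $u_3 = (2(\sqrt3+1))^{1/3}$ into \eqref{eq:e81} completes the proof.
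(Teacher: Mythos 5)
Your proposal is correct and follows essentially the same route as the paper: specialize Theorem~\ref{thm:ninth} at $q = e^{-\pi/9}$, rewrite via Lemma~\ref{lemma:transform}, compute $u_3 = \sqrt{2}G_9$ and $p = 2\sqrt{2}G_{81}/G_9$ from the class invariants, verify that the stated $\alpha, \beta, \gamma$ are the roots of $r$ and satisfy the ordering conditions of Lemma~\ref{lemma:9:a,b,c-order}, and conclude by Theorem~\ref{thm:ninth}\eqref{9:u124}. The only difference is that the paper delegates the root verification and ordering checks to \emph{Mathematica}, whereas you spell out the same verification as a symbolic reduction modulo the minimal polynomial of $\cos(\pi/9)$ — the underlying argument is identical.
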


\begin{proof}
We appeal to Theorem~\ref{thm:ninth} with $q = \exp(-\pi/9)$. By using Lemma~\ref{lemma:transform} with $n = 81^2$, we rewrite Theorem~\ref{thm:ninth}\eqref{9:phi-1-9} as in \eqref{eq:e81}. Recall the value $G_1 = 1$ and the value of $G_9$ from \eqref{eq:G9}. The value of $G_{81}$ can be obtained by \eqref{eq:G81-expr}, or from \cite[p.~193]{BerndtV}, by Lemma~\ref{lemma:G},
\begin{equation*}
G_{1/81} = G_{81} = \left(\frac{(2(\sqrt{3} + 1))^{1/3} + 1}{(2(\sqrt{3} - 1))^{1/3} - 1}\right)^{1/3}.
\end{equation*}
By Lemma~\ref{lemma:p-u_3-G}\eqref{p-G},\eqref{u_3-G} with $n = 1/3$, and by Lemma~\ref{lemma:G}, for Theorem~\ref{thm:ninth}\eqref{9:p},\eqref{9:u3}, we find
\begin{equation*}
p = \frac{2\sqrt{2}G_{81}}{G_9} \qquad \text{and} \qquad u_3 = \sqrt{2} G_9.
\end{equation*}

We verify with \emph{Mathematica} that $\alpha, \beta,$ and $\gamma$ are the roots of $r$ in Theorem~\ref{thm:ninth}\eqref{9:r}, and that the conditions in Lemma~\ref{lemma:9:a,b,c-order}\eqref{9:a,b,c-order-cond1},\eqref{9:a,b,c-order-cond2} hold; therefore $(\alpha, \beta, \gamma)$ is the correct order of the roots, and by Theorem~\ref{thm:ninth}\eqref{9:u124}, the proof is complete.
\end{proof}

\begin{theorem}\label{thm:e81sqrt3} We have
\begin{equation}\label{eq:e81sqrt3}
\frac{\varphi(e^{-81\pi\sqrt{3}})}{\varphi(e^{-\pi\sqrt{3}})} = \frac{1}{9}(1 + u_1 + u_2 + u_3 + u_4),
\end{equation}
where
\begin{equation*}
u_1 = \bigg(\frac{\beta p}{\alpha}\bigg)^{1/3}, \quad
u_2 = \bigg(\frac{\gamma p}{\beta}\bigg)^{1/3}, \quad
u_3 = (2(1 + 2^{1/3} + 2^{2/3}))^{1/3}, \quad
u_4 = \bigg(\frac{\alpha p}{\gamma}\bigg)^{1/3},
\end{equation*}
with
\begin{align*}
\alpha &= \frac{1}{9}\bigg(a - b \cos\bigg(\frac{\pi}{9}\bigg) - c \cos \bigg(\frac{2\pi}{9}\bigg)\bigg),\\
\beta &= \frac{1}{9}\bigg(a + d \cos\bigg(\frac{\pi}{9}\bigg) - b \cos \bigg(\frac{2\pi}{9}\bigg)\bigg),\\
\gamma &= \frac{1}{9}\bigg(a - c \cos\bigg(\frac{\pi}{9}\bigg) + d \cos \bigg(\frac{2\pi}{9}\bigg)\bigg),
\end{align*}
and
\begin{align*}
p &= \frac{2(2^{1/3} + 2^{2/3} + 3^{1/3})(2^{1/3} - 1)^{1/3}}{(9 - 2 \cdot 3^{4/3})^{1/3}},\\
a &= 4 \cdot 2^{1/3}3^{2/3}(1 + 2^{1/3}) + 2 \cdot 3^{1/3}(2^{1/3} + 2^{2/3})^2 + 6(1 + 2^{1/3} + 2^{2/3}),\\
b &= 4\big(1 + 3^{1/3}(3^{1/3} + 1) - 2 \cdot 2^{1/3}(3^{1/3} + 2^{1/3} - 1)\big),\\
c &= 4\big(1 - 2^{1/3}(3^{1/3} - 1)(3^{1/3} + 2^{1/3} - 1)\big),\\
d &= 4\big((2 - 2^{1/3})(1 + 2^{1/3}) - 3^{1/3}(2^{1/3} - 1)(1 + 2^{1/3} + 3^{1/3})\big).
\end{align*}
\end{theorem}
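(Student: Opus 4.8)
The plan is to mirror exactly the treatment of the analogous Theorem~\ref{thm:e81}, applying Theorem~\ref{thm:ninth} with $q = \exp(-\pi/\sqrt{243})$, so that the triple of class invariants is $(n,9n,81n) = (1/243,1/27,1/3)$ as in Table~\ref{table:ninth}. First I would pass to the canonical arguments. Since $\sqrt{243} = 9\sqrt{3}$ we have $q^{9} = \exp(-\pi/\sqrt{3})$ and $q^{1/9} = \exp(-\pi/(81\sqrt{3}))$, so Lemma~\ref{lemma:transform} applied to the numerator with $n = (81\sqrt{3})^{2} = 19683$ and to the denominator with $n = 3$ contributes a net factor $(19683/3)^{1/4} = 6561^{1/4} = (3^{8})^{1/4} = 9$. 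This rewrites Theorem~\ref{thm:ninth}\eqref{9:phi-1-9} precisely in the form \eqref{eq:e81sqrt3}, namely $\varphi(e^{-81\pi\sqrt{3}})/\varphi(e^{-\pi\sqrt{3}}) = \tfrac{1}{9}(1 + u_1 + u_2 + u_3 + u_4)$.

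Next I would evaluate $p$ and $u_3$. By Lemma~\ref{lemma:G} the relevant invariants are $G_{1/243} = G_{243}$, $G_{1/27} = G_{27}$, and $G_{1/3} = G_3$, where $G_3$ and $G_{27}$ are recorded in \eqref{eq:G3G27} and $G_{243}$ is Watson's value \cite{Watson3}. Lemma~\ref{lemma:p-u_3-G}\eqref{p-G},\eqref{u_3-G} then gives $u_3 = \sqrt{2}\,G_{27}/G_3^{3}$ and $p = 2\sqrt{2}\,G_{243}/(G_{27}G_3^{6})$. The value $u_3$ reduces immediately, since $\sqrt{2}\,G_{27}/G_3^{3} = 2^{1/3}(2^{1/3}-1)^{-1/3} = (2(1+2^{1/3}+2^{2/3}))^{1/3}$, matching the stated $u_3$; for $p$, collecting powers of $2$ yields $p = 2^{11/12}(2^{1/3}-1)^{1/3}G_{243}$, and inserting Watson's $G_{243}$ is expected to produce the stated closed form with the surd $(9 - 2\cdot 3^{4/3})^{1/3}$.

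With $u_3$ and $p$ in hand, and using $\varphi^{2}(q)/\varphi^{2}(q^{9}) = (u_3+1)^{2}$ from Lemma~\ref{lemma:u_3}\eqref{u_3-i}, I would assemble the cubic in the $u_3$-and-$p$ form of Theorem~\ref{thm:ninth}\eqref{9:r}, namely $r(\xi) = \xi^{3} - u_3(u_3^{2}+2u_3+4)\xi^{2} + 2u_3^{3}(u_3^{2}+2u_3+4)\xi - p^{3}$. Its three roots lie in the cubic field $\mathbb{Q}(\cos\tfrac{2\pi}{9})$, generated by $x^{3}-3x+1$ whose roots $2\cos\tfrac{2\pi}{9}$, $2\cos\tfrac{4\pi}{9}$, $-2\cos\tfrac{\pi}{9}$ already appeared in the second proof of Theorem~\ref{thm:e27}; using the relation \eqref{eq:Ramanujan-first-trig} to eliminate $\cos\tfrac{4\pi}{9}$ accounts for the two-cosine shape of the displayed $\alpha,\beta,\gamma$ and their cyclically arranged coefficients $a,b,c,d$. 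Exactly as in the proof of Theorem~\ref{thm:e81}, I would confirm with \emph{Mathematica} that these $\alpha,\beta,\gamma$ are the roots of $r$.

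Finally I would check the ordering conditions Lemma~\ref{lemma:9:a,b,c-order}\eqref{9:a,b,c-order-cond1},\eqref{9:a,b,c-order-cond2} numerically, so that $(\alpha,\beta,\gamma)$ is the correct order; Theorem~\ref{thm:ninth}\eqref{9:u124} then yields $u_1 = (\beta p/\alpha)^{1/3}$, $u_2 = (\gamma p/\beta)^{1/3}$, and $u_4 = (\alpha p/\gamma)^{1/3}$, and substituting into \eqref{eq:e81sqrt3} completes the proof. The hard part is algebraic rather than conceptual: simplifying Watson's $G_{243}$ to extract the stated $p$, and pinning down the exact trigonometric factorization of the roots, neither of which collapses to the clean single-angle form of Theorems~\ref{thm:e27}--\ref{thm:e27sqrt3}. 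This is precisely why, following the treatment of the analogous Theorem~\ref{thm:e81}, the root identification and the verification of the ordering are best carried out by symbolic computation.
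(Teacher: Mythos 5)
Your proposal is correct and follows essentially the same route as the paper's proof: Theorem~\ref{thm:ninth} with $q = \exp(-\pi/\sqrt{243})$, Lemma~\ref{lemma:transform} applied twice to produce the factor $\tfrac{1}{9}$, the invariants $G_3$, $G_{27}$, and Watson's $G_{243}$ fed into Lemma~\ref{lemma:p-u_3-G} (your reductions of $u_3$ and $p$ do indeed yield the stated closed forms), and \emph{Mathematica} verification of the roots of $r$ and of the ordering conditions in Lemma~\ref{lemma:9:a,b,c-order}. No gaps.
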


\begin{proof}
We invoke Theorem~\ref{thm:ninth} with $q = \exp(-\pi/\sqrt{243})$. Applying Lemma~\ref{lemma:transform} twice with $n = (81\sqrt{3})^2$ and $n = 3$ for Theorem~\ref{thm:ninth}\eqref{9:phi-1-9}, we find the representation in \eqref{eq:e81sqrt3}. With Lemma~\ref{lemma:G} in mind, we utilize the values of $G_3$ and $G_{27}$ from \eqref{eq:G3G27}. From Watson's paper \cite[pp.~100--105]{Watson3}, with Lemma~\ref{lemma:G}, we also know that
\begin{equation*}
G_{1/243} = G_{243} = \frac{2^{1/12}(2^{1/3} + 2^{2/3} + 3^{1/3})}{(9 - 2 \cdot 3^{4/3})^{1/3}}.
\end{equation*}
By Lemma~\ref{lemma:p-u_3-G}\eqref{p-G},\eqref{u_3-G} with $n = 1/243$, for Theorem~\ref{thm:ninth}\eqref{9:p},\eqref{9:u3}, we obtain the values
\begin{equation*}
p = \frac{2\sqrt{2}G_{243}}{G_{27} G_3^6} \qquad \text{and} \qquad u_3 = \frac{\sqrt{2} G_{27}}{G_3^3}.
\end{equation*}

\emph{Mathematica} confirms that $\alpha, \beta,$ and $\gamma$ are the roots of $r$ in Theorem~\ref{thm:ninth}\eqref{9:r}, and that the conditions in Lemma~\ref{lemma:9:a,b,c-order}\eqref{9:a,b,c-order-cond1},\eqref{9:a,b,c-order-cond2} hold. Thus, by Theorem~\ref{thm:ninth}\eqref{9:u124}, we finish the proof.
\end{proof}

From \cite{BerndtRebak2}, we know the value of $G_{729}$, and this allows us to evaluate $\varphi(e^{-243\pi})$, but the solutions of the corresponding cubic equation seem much more complicated.

In the articles of Son~\cite{Son} and the second author~\cite{Rebak}, we examined Ramanujan's septic theta function identitiy. Analogous cubic and quintic identities are the topic of the paper by Berndt and the second author~\cite{BerndtRebak2}. In this paper, we obtained a ninth degree theta function identity, and calculated five examples of this kind. We believe that these results are special cases of a grand theory that Ramanujan envisioned at the end of Section~12 of Chapter~20 in his second notebook \cite[p.~247]{RamanujanEarlierII}, \cite[p.~400]{BerndtIII}.

\subsubsection*{Acknowledgments} The comments of Professor Bruce C. Berndt are highly appreciated. The first author's research was supported by the National Research Foundation of Korea (NRF) grant funded by the Korea government (MSIT) (No. RS-2023-00240168).

\medskip

\medskip

\end{document}